\documentclass[article]{amsart}
\usepackage[utf8]{inputenc}

\usepackage{enumerate}
\usepackage{fontenc}
\usepackage{color}
\usepackage{amsfonts,amsmath,amssymb,amsthm}
\usepackage{dsfont}
\usepackage{latexsym}
\usepackage{enumerate}
\usepackage[T1]{fontenc}
\usepackage{yfonts}
\frenchspacing
\usepackage{amsmath,amsfonts,amstext}
\usepackage{amssymb,amsmath,amscd,graphicx,epsfig,hhline,mathrsfs,yhmath,latexsym,url}
\usepackage[arrow, matrix, curve]{xy}
\usepackage[british]{babel}
\usepackage{paralist}               	
\usepackage[parfill]{parskip}			
\usepackage{tikz-cd}
\usepackage{tikz}

\newtheorem{theorem}{Theorem}[section]
\newtheorem{corollary}[theorem]{Corollary}

\newtheorem{definition}[theorem]{Definition}

\newtheorem{lemma}[theorem]{Lemma}
\newtheorem{proposition}[theorem]{Proposition}

\newtheorem{remarka}[theorem]{Remark}
\newenvironment{remark}{\begin{remarka}\rm}{\hfill\rule{2mm}{2mm}\end{remarka}}

\newtheorem*{T*}{Theorem}
\newtheorem*{A*}{Proposition}
\newtheorem*{Cor*}{Corollary}

\theoremstyle{definitionbreak}
\theoremstyle{definitionbreak}\newtheorem*{D*}{Definition}

\newtheorem{prelem}{{\bf Theorem}}

\def\aff#1{{\rm Aff}(#1)}

\def\supp#1{{\rm sp}(#1)}


\title{A higher-order generalization of group theory}
\author{Bal\'azs Szegedy}


\begin{document}		
\maketitle

\begin{abstract} 
The goal of this paper is to show that fundamental concepts in higher-order Fourier analysis can be nauturally extended to the non-commutative setting. We generalize Gowers norms to arbitrary compact non-commutative groups. On the structural side, we show that nilspace theory (the algebraic part of higher-order Fourier analysis) can be naturally extended to include all non-commutative groups. To this end, we introduce generalized nilspaces called "groupspaces" and demonstrate that they possess properties very similar to nilspaces. We study $k$-th order generalizations of groups that are special groupspaces called {\it k-step} groupspaces. One step groupspaces are groups. We show that $k$-step groupspaces admit the structure of an iterated principal bundle with structure groups $G_1,G_2,\dots,G_k$. A similar, but somewhat more technical statement holds for general groupspaces, with possibly infinitely many structure groups. Structure groups of groupspaces are in some sense analogous to higher homotopy groups. In particular we use a version of the Eckmann-Hilton argument from homotopy theory to show that $G_i$ is abelian for $i\geq 2$. Groupspaces also show some similarities with $n$-groups from higher category theory (also used in physics) but the exact relationship between these concepts is a subject of future research. 
\end{abstract}

\section{introduction}

Fourier analysis is a powerful tool for studying functions on finite abelian groups and, more generally, on compact or locally compact abelian groups. A natural extension of Fourier analysis to non-commutative groups is representation theory, which is fundamental in various fields of mathematics and physics. In recent decades, driven by developments in additive combinatorics and ergodic theory, a deeper understanding of Fourier analysis has emerged, leading to a new theory known as Higher Order Fourier Analysis (HOF). Central to this theory are the Gowers norms $\|.\|_{U_k}$ \cite{Gowers_2001}, \cite{Gowers_2007},\cite{GTZ},\cite{Candela_2022}, which are used to detect "higher order resonances" in functions. 

A signature algebraic aspect of HOF is that it introduces a novel form of representation theory for abelian groups, where the target spaces of representations are derived from non-commutative algebraic structures such as nilmanifolds \cite{szeghof},\cite{CANDELA2019},\cite{CANDELA2023},\cite{Candela2023-II}. An intriguing interpretatioin of this phenomenon is that there is an emergent non-commutativity within high complexity functions on commutative groups. However, the non-commutativity observed in HOF is restricted to certain nilpotent structures that remain closely related to being abelian.

To obtain an axiomatic understanding of HOF, the algebraic theory of so-called nilspaces was developed \cite{Szegnil},\cite{Parallel},\cite{Candela-1},\cite{Candela-2},\cite{Gutman-1},\cite{Gutman-2},\cite{Gutman-3} . Nilspaces are common generalizations of abelian groups and nil-manifolds. In this framework, higher-order representations of abelian groups are defined as morphisms into compact finite dimensional nilspaces. It was subsequently shown that compact nilspaces can be constructed from double coset spaces of nilpotent groups \cite{double}. However this constiction is highly non-trivial and nilspace theory is still in development. 

The primary motivation for this paper comes from the observation that Gowers norms can be generalized for functions on arbitrary (not necessarily commutative) compact groups. We explain this construction in Chapter \ref{non-comm-gowers}. This observation hints at the existence of a higher order version of group theory that includes nilspaces and all non-commutative groups at the same time. Our paper puts down the foundations of such a theory. We introduce new structures called {\it groupspaces} through modifications of the nilspace axioms. We show that, although groupspaces are more general than nilspaces, many important properties of nilspaces generalize to them. 

Now we describe the main idea behind groupspaces at an informal level. In classical group theory, the structure of a group is defined by specific operations on its elements. Groupspaces, however, are characterized by particular arrangements of elements called {\it cubes}.

The simplest example for a groupspace arises from a group $G$. Here a $k$-dimensional cube is defined as a map $c:\{0,1\}^k\to G$ of the form $$c(v)=g_0g_1^{v_1}g_2^{v_2}\dots g_k^{v_k}$$ where $v_i$ is the $i$-th coordinate of $v$ and $g_0,g_1,\dots,g_k$ are elements in $G$. For instance, if $G=\mathbb{R}^2$ then $2$-dimensional cubes are parallelograms in $\mathbb{R}^2$. Generally $C^k(G)$ denotes the set of all $k$-dimensional cubes in $G$, which is a subset of $G^{\{0,1\}^k}$. The groupspace structure of $G$ is given by the infinite sequence $C^k(G)$, $k=0,1,2,\dots$. 

A general groupspace consists of a set $X$ along with a sequence of cubes $C^k(X)\subseteq X^{\{0,1\}^k}$ satisfying three combinatorial axioms. 

\begin{enumerate}

\item {\bf Presheaf axiom:} This axiom is a composition rule. It says that for certain "nice" maps (called cube morphisms) of the form $\phi:\{0,1\}^k\to\{0,1\}^n$ we have that if $c\in C^n(X)$ then $c\circ\phi\in C^k(X)$ holds. This is the axiom which separates groupspaces from nilspaces depending on the notion of nice maps. In case of groupspaces, fewer maps are considered to be cube morphisms and thus groupspaces are more general than nilspaces. For example the map $\phi:\{0,1\}^2\to\{0,1\}^2$ defined by $\phi((x,y))=(y,x)$ is a cube morphism in nilpsace theory but it is not a cube morphism in the definition of groupspaces. Roghly speaking, a map $\phi:\{0,1\}^k\to\{0,1\}^n$ is a cube morphism (in the groupspace setting) if each coordinate function of the form $v\mapsto \phi(v)_i$ depends on at most one coordinate of $v$ and furthermore this dependence must be monotonic in $i$. The monotonicity is omitted in the nilspace setting. 

\item {\bf Ergodicity axiom:} This axiom states that $C^1(X)=X^{\{0,1\}}$. In other words, every pair of elements in $X$ forms a $1$-dimensional cube. 

\item {\bf Completion axiom:} This axiom says that certain configurations (called corners) that are the unions of $k+1$ appropriately attached $k$-dimensional cubes, can be completed to a $k+1$-dimensional cube. For example any $3$ points in $\mathbb{R}^2$ can be completed to a parallelogram. If the completion is unique for some $k$ then we say that the groupspace is $k$-step. It can be shown that $k$-step groupspaces are completely determined by the set $C^{k+1}(X)$. It turns out that $1$-step groupsapces are groups with the above mentioned groupspace structure.  It is worth mentioning that the completion axiom is somewhat analogous to the Kan condition in simplicial homotopy theory.
\end{enumerate}

\medskip

\begin{center}
\begin{tikzpicture}

\coordinate (A) at (0,0,0);
\coordinate (B) at (2,0,0);
\coordinate (C) at (2,2,0);
\coordinate (D) at (0,2,0);
\coordinate (E) at (0,0,2);
\coordinate (F) at (2,0,2);
\coordinate (G) at (2,2,2);
\coordinate (H) at (0,2,2);

\draw[thick, dotted] (A) -- (B) -- (C) -- (D) -- cycle; 
\draw[thick] (E) -- (F) -- (G) -- (H) -- cycle; 
\draw[thick] (A) -- (E); 
\draw[thick] (B) -- (F);
\draw[thick] (A) -- (B);
\draw[thick] (A) -- (D);
\draw[thick, dotted] (C) -- (G);
\draw[thick] (D) -- (H);


\end{tikzpicture}

\text{\it Completion of a corner for $k=2$}

\end{center}

\medskip

Importantly, the precise definitions of these axioms are elementary and do not require prior knowledge of any deep theory. The paper is essentially self contained, so readers don't have to know higher-order Fourier analysis, nilspace theory, or category theory. Although it sometimes mentions basic category theory to enrich the context, one can ignore those parts without losing any understanding. 

The paper begins with an introductory section covering basic notations, definitions, and a chapter on affine homomorphisms between groups, which are slight generalizations of normal homomorphisms.
Chapter \ref{cubesin} introduces cubes in groups and describes their fundamental properties, including the equivalence between two different definitions of cubes (and their probability distributions), which proves to be a useful tool (see Corollary \ref{simpisgeneral}.) In Chapter \ref{non-comm-gowers}, Gowers norms on non-commutative groups are introduced. Although the proofs in this chapter are similar to the commutative case, Proposition \ref{samesure} from Chapter \ref{cubesin} is crucial for handling non-commutativity.

Chapters \ref{cubmor} and \ref{nil+group} define nilspaces and groupspaces, introducing morphisms between cubes (a combinatorial concept) and the three axioms. The concept of a $k$-step groupspace is also introduced. Chapter \ref{Class-1} characterizes all $1$-step groupspaces and shows that they are equivalent to groups. {\it In general we consider $k$-step groupspaces as $k$-th order generalizations of groups.} A remarkable aspect of this view point is the construction of higher degree structures on abelian groups (originally introduced in \cite{Szegnil}) and reviewed in Chapter \ref{highdegabgroups}. Using these structures one can for example consider classical polynomials in $\mathbb{R}[x]$ as morphisms between nilspaces (see Remark \ref{polyrem}).  Chapter \ref{exhigh} proves the existence of non-trivial higher-step groupspaces that are not nilspaces, showing that groupspace theory can produce new structures beyond groups and nilspaces. 

In later chapters we go deeper in the structures of groupsapces and generalize various useful facts from nilspace theory to them. We prove for every $k\in\mathbb{N}$ that every groupspace has a largest $k$-step factor (see Chapter \ref{charfact}). In some sense, these factors exploit the structure of every groupspace. Using these factors we show that every $k$-step groupspace $F$ admits a decomposition as an iterated principal group boundle with structure groups $G_i$ of the form

\[
\begin{tikzcd}[column sep=1.3cm]
    F_0 & \arrow[l, "G_1~\curvearrowright" above] F_1 & \arrow[l, "G_2~\curvearrowright" above] F_2 & \arrow[l, "G_3~\curvearrowright" above] \cdots & \arrow[l, "G_k~\curvearrowright" above] F_k=F
\end{tikzcd}
\]
where $F_0$ is a one point space and $G_k$ is abelian for $k\geq 2$. In this decomposition, $F_i$ is a unique maximal $i$-step factor of $F$. Note that the structure groups alone do not describe $F$ and for a complete description extra co-homological information on the extensions is needed. The fact that all but the first structure groups are abelian shows a surprising similarity with homotopy theory where higher homotopy groups are abelian. This similarity is further emphasized by the fact that the Eckmann-Hilton argument (see Theorem \ref{Eckmann-Hilton}) for the commutativity of higher homotopy groups is an important part of our proof.

While we do not give any specific application of groupspaces, we have good reasons to believe that they are natural and important structures. Nilspaces, that are special groupspaces, can be viewed as emergent structures in complicated functions on large abelian groups. This is also supported by the fact that nilspaces naturally appear as topological factors of ultraproducts of abelian groups. Additionally, Gowers norms can be generalized to compact nilspaces, and the densities of arithmetic configurations (like arithmetic progressions) can be calculated in subsets and functions on nilspaces.

Our results imply (however details for this are not given in this paper) that Gowers norms can be further extended to functions on compact groupspaces, making them one of the broadest familiy of structures where this is possible. This also implies that groupspaces provide new examples of higher-order Hilbert spaces, as introduced by Tao \cite{TaoBook}. Moreover, the densities of non-commutative configurations (represented by subsets of free groups) can be naturally calculated in functions on compact groupspaces.

Furthermore, we mention that the  type of definition used for groupsapces has proven useful in other contexts as well. For example, the algebraic structures emerging in the hypergraph regularity lemma (Delta complexes) share a similar nature. We believe that groupspaces fit into a broader category of higher-order generalizations of classical concepts, making them a valuable case study. Additionally, groupspaces are aesthetically pleasing structures, which in mathematics often correlates with usefulness.

Finally we mention another generalization of group theory which seems to be related to groupspaces, but the exact relationship is not clear. This is called an $n$-group or an $n$-dimensional higher group. The author of this paper was not able to find an elementary definition of this concept which does not use higher category theory or homotopy theory. The wikipedia says that {\it "The general definition of $n$-group is a matter of ongoing research. However, it is expected that every topological space will have a homotopy $n$-group at every point, which will encapsulate the Postnikov tower of the space up to the homotopy group $\pi_n$, or the entire Postnikov tower for $n=\infty$.} It seems however that $2$-groups are much better understood \cite{2groups}. Also the wikipedia says that: {\it 2-groups can be described using crossed modules and their classifying spaces.}

\medskip

\noindent{\bf Acknowledgement.} Research was supported by the NKFIH "\'Elvonal" KKP 133921 grant.

\section{Notation and basics}

We will use the short hand notation $[n]$ for the set $\{1,2,\dots,n\}$. If $X$ is a set then $X^n$ is the same as $x^{[n]}$ and for $x\in X^n$ we denote by $x_i\in X$ the $i$-th coordinate of $x$.

We will work with general abstract groups and also with compact topological groups. It will be important that if $G$ is a compact a group then there exits a unique left and right translation invariant Borel probability measure $\mu_G$ on $G$, called Haar measure. If $G$ is finite then $\mu_G$ is the uniform measure. For this reason we will sometimes refer to the Haar measure as the uniform measure on the compact group $G$. 

For statements where commutativity of a group is important we will use additiv notation $a+b$ for the group operation. In this context $na$ refers to the sum $a+a+\dots+a$ with $n$-terms. In general groups we use the conventional product notation $ab$. The notation $\prod_{i=1}^n$ in a group always denotes a left to right product:
$$\prod_{i=1}^n g_i=g_1g_2\dots g_n.$$

For conjugation in groups we follow the standard notation $g^h:=h^{-1}gh$. This is not to be confused with the power $g^n$ where $n\in\mathbb{N}$. We have that $(g_1g_2)^h=g_1^hg_2^h$, $(g^n)^b=(g^b)^n$ and that
$$g_1g_2\dots g_nh=hg_1^hg_2^h\dots g_n^h.$$ if $g_1,g_2,\dots,g_n,g,h\in G$ and $n\in\mathbb{Z}$.

We denote the free group with $k$ generators by $F_k=\langle g_1,g_2\dots,g_k\rangle$. We introduce the set $F_k^\square$ in the free group $F_k$ by $$F_k^\square:=\{\prod_{i=1}^k g_i^{v_i}~:~v\in\{0,1\}^k\}\subset F_k.$$ For every $k\in\mathbb{N}$ let $\tau_k:\{0,1\}^k\to F_k^\square$ denote the map defined by
\begin{equation}\label{tauk}
\tau_k(v)=\prod_{i=1}^kg_i^{v_i}.
\end{equation}
Observe that $\tau_k$ is a bijection and thus $\tau_k^{-1}$ exists. We say that $\tau_k$ is the {\it fundamental cube} in $F_k$.

\section{Affine homomorphisms between groups}

\begin{definition} Let $G_1$ and $G_2$ be groups. We say that $f:G_1\to G_2$ is an affine morphism if $f(x)=a_1 m(x) a_2$ holds for some $a_1,a_2\in G_2$ and group homomorphism $m:G_1\to G_2$. We denote the set of all affine morphisms from $G_1$ to $G_2$ by $\aff{G_1,G_2}$.
\end{definition}

Note that $a_1 m(x) a_2=a_1a_2m(x)^{a_2}=m(x)^{a_1^{-1}}a_1a_2$ holds. Since $x\mapsto m(x)^h$ is a group homomorphism this shows that affine morphisms can be equivalently defined by omitting one of $a_1$ and $a_2$. 

\begin{lemma}\label{charaff} Let $G_1,G_2$ be groups. A map $f:G_1\to G_2$ is an affine morphism if and only if for every quadruple  $a,b,c,d\in G_1$ with $ab^{-1}cd^{-1}=1$ we have that $f(a)f(b)^{-1}f(c)f(d)^{-1}=1$.
\end{lemma}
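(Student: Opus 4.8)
The plan is to prove the two implications separately, with the forward direction being a direct telescoping computation and the converse resting on repackaging the four-variable hypothesis as a single functional equation.

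For the forward implication I would assume $f(x)=a_1 m(x) a_2$ for some homomorphism $m\colon G_1\to G_2$ and substitute directly into the product $f(a)f(b)^{-1}f(c)f(d)^{-1}$. Writing out each factor, the interior copies of $a_2 a_2^{-1}$ and $a_1^{-1}a_1$ cancel, leaving $a_1\,m(a)m(b)^{-1}m(c)m(d)^{-1}\,a_1^{-1}$. Because $m$ is a homomorphism this equals $a_1\,m(ab^{-1}cd^{-1})\,a_1^{-1}$, which collapses to the identity precisely when $ab^{-1}cd^{-1}=1$. This step is entirely routine and presents no obstacle.

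For the converse the essential first move is to notice that the hypothesis is equivalent to one functional equation. Given arbitrary $a,b,c\in G_1$, the choice $d:=ab^{-1}c$ makes $ab^{-1}cd^{-1}=1$ hold automatically, so the hypothesis forces $f(ab^{-1}c)=f(a)f(b)^{-1}f(c)$ for all $a,b,c$. The decisive specialization is then to set $b=1$, which yields $f(xy)=f(x)f(1)^{-1}f(y)$ for all $x,y$. With this identity in hand I would define $m(x):=f(1)^{-1}f(x)$, so that $f(x)=f(1)\,m(x)$ and $m(1)=1$; a one-line computation gives $m(xy)=f(1)^{-1}f(x)f(1)^{-1}f(y)=m(x)m(y)$, so $m$ is a homomorphism. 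Hence $f(x)=f(1)\,m(x)$ exhibits $f$ as an affine morphism with $a_1=f(1)$ and $a_2=1$, which is permissible in view of the remark following the definition allowing one of the two constants to be omitted.

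I expect the main obstacle to be purely conceptual rather than computational: recognizing that the seemingly rigid four-element relation is nothing more than the translated functional equation $f(ab^{-1}c)=f(a)f(b)^{-1}f(c)$, and then selecting the normalization $m(x)=f(1)^{-1}f(x)$ together with the substitution $b=1$. Once these choices are made, the non-commutativity of $G_2$ causes no difficulty, since every stray factor of $f(1)^{-1}$ sits in exactly the position needed for the homomorphism identity to close up.
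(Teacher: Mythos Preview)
Your proposal is correct and follows essentially the same approach as the paper: both directions use a direct telescoping computation for the forward implication and normalize by $f(1)$ for the converse. The only cosmetic difference is that the paper sets $g(x)=f(x)f(1)^{-1}$ and applies the hypothesis to the quadruple $(1,a,ab,b)$, whereas you set $m(x)=f(1)^{-1}f(x)$ and first extract the functional equation $f(ab^{-1}c)=f(a)f(b)^{-1}f(c)$ before specializing $b=1$; these are mirror-image normalizations leading to the same conclusion.
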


\begin{proof} If $f$ is an affine morphism then $f(x)=gm(x)$ holds for some group homomorphism $m:G_1\to G_2$ and $g\in G_2$. Thus $$f(a)f(b)^{-1}f(c)f(d)^{-1}=gm(a)m(b)^{-1}g^{-1}gm(c)m(d)^{-1}g^{-1}=$$
$$=(m(a)m(b)^{-1}m(c)m(d)^{-1})^{g^{-1}}=m(ab^{-1}cd^{-1})^{g^{-1}}=1.$$
To see the other direction let $f:G_1\to G_2$ be some function satisfying the condition in the statement. Our goal is to show that the map $g(x):=f(x)f(1)^{-1}$ is a homomorphism. Using the condition for  $1,a,ab,b$ we obtain that $1=f(1)f(a)^{-1}f(ab)f(b)^{-1}=g(a)^{-1}g(ab)g(b)^{-1}$ and so $g(a)g(b)=g(ab)$.
\end{proof}

Let $F_k=\langle g_1,g_2\dots,g_k\rangle$ denote the free group in $k$ generators and let $F_k^*$ denote the subset $\{1,g_1,g_2,\dots,g_k\}$ in $F_k$ . The next lemma shows that if $G$ is any group then the functions of the form $f:F_k^*\to G$ extend uniquely to affine morphisms and so they are in a natural one to one correspondence with the elements of $\aff{F_k,G}$.

\begin{lemma}\label{affchar} Let $G$ be an arbitrary group and $k\in\mathbb{N}$. Then 
\begin{enumerate}
\item every function $f:F_k^*\to G$ extends to a function in $\aff{F_k,G}$,
\item if two functions $f_1,f_2\in\aff{F_k,G}$ agree on $F_k^*$ then they are the same.
\end{enumerate}
\end{lemma}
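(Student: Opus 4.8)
The plan is to prove both parts by reducing everything to the structure of a free group, exploiting that $F_k$ is free on $g_1,\dots,g_k$ together with the characterization of affine morphisms already established in Lemma~\ref{charaff}.

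For part (1), I would first recall that by the remark following the definition, every affine morphism can be written in the form $f(x)=g\,m(x)$ with $m$ a group homomorphism and $g\in G$. So, given an arbitrary function $f:F_k^*\to G$, the natural strategy is to set $g:=f(1)$ and then try to build a homomorphism $m:F_k\to G$ such that $g\,m(x)=f(x)$ for all $x\in F_k^*$. Since $F_k^*=\{1,g_1,\dots,g_k\}$, the constraints are $m(1)=1$ (automatic) and $m(g_i)=f(1)^{-1}f(g_i)$ for each $i\in[k]$. Here the freeness of $F_k$ does all the work: because $F_k$ is free on the generators $g_1,\dots,g_k$, the assignment $g_i\mapsto f(1)^{-1}f(g_i)$ extends uniquely to a group homomorphism $m:F_k\to G$ by the universal property. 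Setting $f^*(x):=f(1)\,m(x)$ then gives an element of $\aff{F_k,G}$ which by construction agrees with $f$ on $F_k^*$, proving (1).

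For part (2), suppose $f_1,f_2\in\aff{F_k,G}$ agree on $F_k^*$. I would form the two homomorphisms $m_1,m_2$ with $f_j(x)=f_j(1)\,m_j(x)$. Since $1\in F_k^*$ and the two maps agree there, we get $f_1(1)=f_2(1)$, so it suffices to show $m_1=m_2$. Agreement on each $g_i\in F_k^*$ forces $m_1(g_i)=m_2(g_i)$ for all $i$, and since two homomorphisms out of a free group that agree on a generating set are equal (again the universal property, or equivalently an induction on word length in $F_k$), we conclude $m_1=m_2$ and hence $f_1=f_2$. One could alternatively phrase (2) via Lemma~\ref{charaff}: the difference of two affine morphisms is controlled by the relation $ab^{-1}cd^{-1}=1$, but the cleanest route is the direct freeness argument.

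I do not anticipate a genuine obstacle here, since both parts are essentially restatements of the universal property of the free group combined with the normal-form $f=g\cdot m$ for affine morphisms. The only point requiring a little care is making sure the chosen normal form is compatible with the value at $1$: one must check that an affine morphism is determined by the pair $(f(1),m)$ and that $f(1)$ can be read off independently, which follows from $1\in F_k^*$ and $m(1)=1$. Everything else reduces to the fact that a homomorphism out of $F_k$ is freely specified by, and uniquely determined by, its values on the generators.
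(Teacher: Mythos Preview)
Your proposal is correct and matches the paper's proof essentially line for line: both parts use the normal form $f(x)=f(1)\,m(x)$ together with the universal property of $F_k$ to extend on part~(1) and to conclude $m_1=m_2$ from agreement on generators in part~(2). There is nothing to add.
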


\begin{proof} We start with the first claim. The universal property of the free group implies that there is a group homomorphism $h:F_k\to G$ such that hat $h(g_i)=f(1)^{-1}f(g_i)$ for every $1\leq i\leq k$. Then $f(1)h\in\aff{F_k,G}$ is an affine morphism extending $f$.

To prove the second claim we use that $f_1=a_1m_1$ and $f_2=a_1m_2$ holds for some $a_1,a_2\in G$ and group homomorphisms $m_1,m_2:F_k\to G$. Applying both equations to the identity element we obtain that $a_1=f_1(1)=f_2(1)=a_2$. Thus the group homomorphisms $m_1=a_1^{-1}f_1$ and $m_2=a_1^{-1}f_2$ agree on the generators $g_i$ for $1\leq i\leq k$. This implies that $m_1=m_2$ and so $f_1=f_2$.
\end{proof}

This lemma has the next corollary.

\begin{lemma}\label{lift} Let $G$ be a group, $N$ be a normal subgroup in $G$ and $k\in\mathbb{N}$. Then for every $f\in\aff{F_k,G/N}$ there is $f'\in\aff{F_k,G}$ such that $f=h\circ f'$ where $h:G\to G/N$ is the factor map. 
\end{lemma}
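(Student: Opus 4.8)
The plan is to reduce everything to Lemma \ref{affchar}, which says that an affine morphism out of $F_k$ is completely pinned down by its restriction to the finite set $F_k^*=\{1,g_1,\dots,g_k\}$, and, conversely, that any assignment of values on $F_k^*$ extends to such a morphism. The only extra ingredient needed is that the factor map $h:G\to G/N$ is surjective.

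First I would record the values of the given $f\in\aff{F_k,G/N}$ on $F_k^*$, namely $f(1),f(g_1),\dots,f(g_k)\in G/N$. Using surjectivity of $h$, I would choose preimages $a\in G$ with $h(a)=f(1)$ and $b_i\in G$ with $h(b_i)=f(g_i)$ for each $1\le i\le k$. This defines a function $F_k^*\to G$ sending $1\mapsto a$ and $g_i\mapsto b_i$, which by part (1) of Lemma \ref{affchar} extends to some $f'\in\aff{F_k,G}$.

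Next I would observe that the composite $h\circ f'$ is again an affine morphism, now into $G/N$: writing $f'(x)=a_1 m(x) a_2$ for a homomorphism $m:F_k\to G$, we get $h(f'(x))=h(a_1)\,(h\circ m)(x)\,h(a_2)$, and $h\circ m$ is a group homomorphism $F_k\to G/N$, so indeed $h\circ f'\in\aff{F_k,G/N}$. By construction $h\circ f'$ agrees with $f$ on every element of $F_k^*$, since $h(f'(1))=h(a)=f(1)$ and $h(f'(g_i))=h(b_i)=f(g_i)$.

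Finally, part (2) of Lemma \ref{affchar} — uniqueness of the affine extension — forces $h\circ f'=f$, which is exactly the required factorization. I do not expect a genuine obstacle here; the content is carried entirely by Lemma \ref{affchar} together with surjectivity of $h$. The only point that needs care is verifying that $h\circ f'$ really is an affine morphism (so that the uniqueness clause of Lemma \ref{affchar} applies), rather than merely coinciding with $f$ on $F_k^*$.
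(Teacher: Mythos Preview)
The proposal is correct and follows essentially the same approach as the paper: lift $f$ on $F_k^*$ via surjectivity of $h$, extend by Lemma~\ref{affchar}(1), and conclude $h\circ f'=f$ by Lemma~\ref{affchar}(2). Your argument is in fact slightly more complete than the paper's, since you explicitly verify that $h\circ f'$ is an affine morphism before invoking the uniqueness clause.
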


\begin{proof} Choose any $g:F_k^*\to G$ such that $h\circ g$ is equal to the restriction of $f$ to $F_k^*$. By lemma \ref{affchar} there is an extension $f':F_k\to G$ of $g$ such that $f'\in\aff{F_k,G}$. Now we have that $h\circ f'(x)$ agrees with $f(x)$ on $F_k^*$. Thus by lemma \ref{affchar} $h\circ f'=f$.
\end{proof}

\section{Cubes in non-commutative groups}\label{cubesin}

Cubes in Abelian groups are fundamental objects in Higher Order Fourier Analysis. If $A$ is an abelian group then a $k$-dimensional cube is a function of the form $c:\{0,1\}^k\to A$ defined by $$c(v):=a_0+\sum_{i=1}^k v_ia_i$$ where $a_0,a_1,\dots,a_k$ are elements in $A$. An alternative more symmetric definition is given by $$c(v):=\sum_{i=1}^k a_{i,v_i}$$ where $a_{i,j}$ for $i\in[k], j\in\{0,1\}$ are elements in $A$. It is easy to see that these two definitions lead to the same notion of a cube. The major difference is that while the first definition also determines a bijection between $k$-dimensional cubes and $A^{k+1}$, the second definition is redundant and different systems of elements can give the same cube. If $A$ is finite or more generally compact then both definitions can be used to define a "uniform" measure on cubes by choosing the elements $a_i$ or $a_{i,j}$ independently and uniformly (according to the Haar measure) at random. It is easy to see that these two probability measures are the same for every $k$. This leads to a robust notion of a random cube in $A$. The goal of this chapter is to show that similar statements hold in the non-commutative setting.


Let $G$ be an arbitrary group. We say that a {\it simple cube} in $G$ of dimension $k$ is a function $c:\{0,1\}^k\to G$ defined by
\begin{equation}\label{simplecubes}
c(v):=a_0\prod_{i=1}^ka_i^{v_i}
\end{equation}
where $a_0,a_1,\dots,a_k\in G$. If $G$ is compact with Haar measure $\mu_G$ and the elements $a_i$ are chosen independently with distribution $\mu_G$ then we obtain a random function $c$ whose distribution is denoted by $\nu_{G,k}$. The next lemma says that simple cubes are the images of the fundamental cube in the free group $F_k$ under affine homomorphisms in $\aff{F_k,G}$.

\begin{lemma}\label{simpaffdef} Let $k\in\mathbb{N}$, $G$ be a group and $c:\{0,1\}^k\to G$ be a function. Then $c$ is a simple cube if and only if there is an affine homomorphism $f:F_k\to G$ such that $c=f\circ\tau_k$ where $\tau_k$ is defined in (\ref{tauk}).
\end{lemma}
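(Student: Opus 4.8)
The plan is to prove both implications by directly unwinding the two definitions, using the universal property of the free group $F_k$ together with the fact that a homomorphism commutes with the particular ordered product defining $\tau_k$. No deep input is needed beyond these two observations.

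For the direction assuming $c=f\circ\tau_k$ with $f\in\aff{F_k,G}$, I would first pass to the reduced form of an affine morphism recorded in the remark after the definition, writing $f(x)=g\,m(x)$ for some $g\in G$ and a group homomorphism $m:F_k\to G$. Since each coordinate satisfies $v_i\in\{0,1\}$, we have $m(g_i^{v_i})=m(g_i)^{v_i}$ term by term, and because $m$ is a homomorphism it carries the left-to-right product $\tau_k(v)=\prod_{i=1}^k g_i^{v_i}$ to $\prod_{i=1}^k m(g_i)^{v_i}$ in the same order. Hence $c(v)=g\prod_{i=1}^k m(g_i)^{v_i}$, which is exactly a simple cube with $a_0=g$ and $a_i=m(g_i)$.

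Conversely, given a simple cube $c(v)=a_0\prod_{i=1}^k a_i^{v_i}$, I would invoke the universal property of the free group to obtain the unique homomorphism $m:F_k\to G$ with $m(g_i)=a_i$ for each $i$, and then set $f(x):=a_0\,m(x)$, which lies in $\aff{F_k,G}$. The same computation as above yields $f(\tau_k(v))=a_0\prod_{i=1}^k m(g_i)^{v_i}=a_0\prod_{i=1}^k a_i^{v_i}=c(v)$, so $c=f\circ\tau_k$ as required.

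The argument is essentially a bookkeeping verification, so I do not expect a serious obstacle. The one point that requires care is that $G$ is non-commutative: I must keep the product strictly left-to-right throughout and rely on the restriction $v_i\in\{0,1\}$ to guarantee the factor-by-factor identity $m(g_i^{v_i})=m(g_i)^{v_i}$. Once this is observed, both directions follow immediately from the definitions and the universal property.
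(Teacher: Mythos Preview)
Your proposal is correct and follows essentially the same approach as the paper's proof. The only cosmetic difference is that the paper packages the construction of the affine morphism via Lemma~\ref{affchar} (extending a function on $F_k^*=\{1,g_1,\dots,g_k\}$ to an affine morphism), whereas you invoke the universal property of $F_k$ directly together with the decomposition $f(x)=g\,m(x)$; these are equivalent formulations of the same step.
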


\begin{proof} Assume that $F_k=\langle h_1,h_2,\dots,h_k\rangle$. If $c$ is a simple cube defined as in (\ref{simplecubes}) then let $f:F_k\to G$ be the unique affine homomorphism  such that $f(1)=g_0$ and $f(h_i)=g_0g_i$ holds for $i\in [k]$. The existence of $f$ is guaranteed by lemma \ref{affchar}. Then it is easy to see that $f\circ\tau_k=c$. On the other hand if $f:F_k\to G$ is an affine homomorphism then let $g_0=f(1)$ and $g_i=f(1)^{-1}f(h_i)$ for $i\in [k]$. Let $c$ be as in (\ref{simplecubes}). Again we have that $f\circ\tau_k=c$ which completes the proof. 
\end{proof}

We say that a {\it general cube} of dimension $k$ is a function $c:\{0,1\}^k\to G$ defined by
$$c(v):=\prod_{i=1}^ka_{i,v_i}$$ where $a_{i,j}\in G$ for $i\in [k],j\in\{0,1\}$. If $G$ is compact and the elements $a_{i,j}$ are chosen independently with distribution $\mu_G$ then we obtain a random function $c$ whose distribution is denoted by $\nu'_{G,k}$. Both $\nu_{G,k}$ and $\nu'_{G,k}$ can be viewed as Borel probability measures on $G^{\{0,1\}^k}$. 

Now we show that simple cubes and general cubes are the same. It is clear that simple cubes are general cubes. Indeed: If a simple cube $c$ is given by the constants $a_0,a_1,\dots,a_k\in G$ then the general cube corresponding to the system $a_{1,0}=a_0,a_{1,1}=a_0a_1$ and $a_{i,0}=1,a_{i,1}=a_i$ for $2\leq i\leq k$ is equal to $c$. On the other hand, the next lemma implies that general cubes can be given as simple cubes.

\begin{lemma}\label{homeqlem} Let $G$ be a group, $n\in\mathbb{N}$ and $a_{i,j}, i\in [n], j\in\{0,1\}$ be a system of elements in $G$. Let $f:\{0,1\}^n\to G$ be the map defined by $f(v):=\prod_{i=1}^n a_{i,v_i}$. Then there are elements $c,h_1,h_2,\dots,h_n\in G$ such that $f(v)=c \prod_{i=1}^n h_i^{v_i}$. 
\end{lemma}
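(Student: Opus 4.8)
The plan is to proceed by induction on $n$, using a single algebraic fact — that a group element can be pushed across a power at the cost of a conjugation — together with a reparametrisation that isolates the dependence on each coordinate $v_i$ into a power. The starting observation is that for each $i$ one can set $b_i := a_{i,0}^{-1} a_{i,1}$ and check the identity $a_{i,v_i} = a_{i,0} b_i^{v_i}$, valid for both values $v_i \in \{0,1\}$ (the value $v_i = 0$ gives $a_{i,0}$, and $v_i = 1$ gives $a_{i,0} a_{i,0}^{-1} a_{i,1} = a_{i,1}$). This already rewrites $f(v) = a_{1,0} b_1^{v_1} a_{2,0} b_2^{v_2} \cdots a_{n,0} b_n^{v_n}$, so that all genuine dependence on $v$ sits inside the powers $b_i^{v_i}$, and the task becomes to collect the constants $a_{i,0}$ on the left while moving all the $b_i^{v_i}$ to the right.

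The base case $n = 1$ is immediate: take $c = a_{1,0}$ and $h_1 = b_1$. For the inductive step I would peel off the first factor, writing $f(v) = a_{1,v_1}\, g(v')$ where $v' = (v_2,\dots,v_n)$ and $g(v') = \prod_{i=2}^n a_{i,v_i}$ is a general cube of dimension $n-1$. By the induction hypothesis there exist $c', h_2,\dots,h_n \in G$ with $g(v') = c' \prod_{i=2}^n h_i^{v_i}$. Substituting this and the identity $a_{1,v_1} = a_{1,0} b_1^{v_1}$ gives $f(v) = a_{1,0} b_1^{v_1} c' \prod_{i=2}^n h_i^{v_i}$. The only thing obstructing the desired form is that $c'$ sits between $b_1^{v_1}$ and the remaining powers. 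I would clear it using the transposition $b_1^{v_1} c' = c'\, (b_1^{v_1})^{c'} = c'\, (b_1^{c'})^{v_1}$, where the first equality is just the definition $g^h = h^{-1} g h$, and the second is the rule $(g^m)^h = (g^h)^m$ recorded in the preliminaries (applied with the integer $m = v_1 \in \{0,1\}$). Setting $c := a_{1,0} c'$ and $h_1 := b_1^{c'}$ then yields $f(v) = c\, h_1^{v_1} \prod_{i=2}^n h_i^{v_i} = c \prod_{i=1}^n h_i^{v_i}$, completing the induction.

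The heart of the argument — and the only point at which non-commutativity plays any role — is this single move of a group element past a power $b_1^{v_1}$, which is precisely what the conjugation identities of Section~2 are designed to handle; I expect no genuine obstacle beyond the bookkeeping, since each step produces exactly the conjugated generator needed for the next. As an alternative to the inductive phrasing, one could instead carry out the reparametrisation for all $i$ at once and then sweep the constants $a_{i,0}$ leftward in a single pass using the collective identity $g_1 g_2 \cdots g_m h = h\, g_1^h g_2^h \cdots g_m^h$ from the preliminaries; this yields the same conclusion with the $h_i$ expressed as iterated conjugates of the $b_i$, but the induction keeps the conjugations cleanly localised and is the version I would write out in full.
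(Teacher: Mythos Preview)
Your proof is correct and follows essentially the same inductive approach as the paper, using conjugation to slide a constant past a power $b^{v_i}$. The only cosmetic difference is that you peel off the \emph{first} factor and conjugate the single element $b_1$ by $c'$, whereas the paper peels off the \emph{last} factor and conjugates all of $h_1',\dots,h_{n-1}'$ by $a_{n,0}$; both directions work for the same reason.
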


\begin{proof} We go by induction. If $n=1$ then $c=a_{1,0}, h_1=a_{1,0}^{-1}a_{1,1}$ is a good choice. Assume that the statement is true for $n-1\geq 1$. It follows that there are elements $c',h_1',h_2',\dots,h_{n-1}'\in G$  such that $\prod_{i=1}^{n-1} a_{i,v_i}=c'\prod_{i=1}^{n-1}h_i'^{v_i}$ holds for every $v\in\{0,1\}^{n-1}$. Now we have that
$$f(v)=c'\prod_{i=1}^{n-1}h_i'^{v_i}a_{n,v_n}=c'a_{n,0}\left(\prod_{i=1}^{n-1}h_i'^{v_i}\right)^{a_{n,0}}(a_{n,0}^{-1}a_{n,1})^{v_n}.$$ Consequently the choice $c:=c'a_{n,0},h_i:=h_i'^{a_{n,0}}$ for $i\in [n-1]$ and $h_n:=a_{n,0}^{-1}a_{n,1}$ satisfies the required equation.
\end{proof}

\begin{corollary}\label{simpisgeneral} Simple cubes and general cubes are the same objects. We will call them cubes.
\end{corollary}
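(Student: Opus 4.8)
The plan is to establish the corollary as a set-theoretic equality between two classes of functions $\{0,1\}^k\to G$, proving inclusion in both directions. Since the statement merely repackages the discussion immediately preceding it together with Lemma \ref{homeqlem}, the work reduces to assembling two already-available facts into a single clean conclusion.

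First I would record the forward inclusion, that every simple cube is a general cube. This was already made explicit in the paragraph just before Lemma \ref{homeqlem}: given a simple cube determined by constants $a_0,a_1,\dots,a_k$, the choice $a_{1,0}=a_0$, $a_{1,1}=a_0a_1$, and $a_{i,0}=1$, $a_{i,1}=a_i$ for $2\leq i\leq k$ produces a general cube agreeing with it at every $v$. Hence the class of simple cubes is contained in the class of general cubes.

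For the reverse inclusion, that every general cube is a simple cube, I would invoke Lemma \ref{homeqlem} directly. Given any general cube $c(v)=\prod_{i=1}^k a_{i,v_i}$, the lemma supplies elements $c,h_1,\dots,h_k\in G$ with $c(v)=c\prod_{i=1}^k h_i^{v_i}$, which is exactly the defining form of a simple cube with $a_0=c$ and $a_i=h_i$. This yields the opposite inclusion, and the two inclusions together give the asserted equality.

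With both directions in hand there is no genuine obstacle remaining: the only nontrivial content was the inductive rewriting carried out in the proof of Lemma \ref{homeqlem}, where conjugation by $a_{n,0}$ is used to push the new factor past the product, and the corollary is simply its tidy consequence. Combined with Lemma \ref{simpaffdef}, this justifies the common terminology ``cube'' for objects presented in either form.
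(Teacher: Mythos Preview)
Your proposal is correct and matches the paper's approach exactly: the forward inclusion is the explicit observation in the paragraph preceding Lemma~\ref{homeqlem}, and the reverse inclusion is precisely the content of Lemma~\ref{homeqlem}. There is nothing to add.
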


In the rest of this chapter we show that the two types of probability measures $\nu_{G,k}$ and $\nu'_{G,k}$ on cubes are the same for every $k\in\mathbb{N}$. We will need the following elementary fact about the uniform (Haar) measure on compact groups. 

\begin{lemma}\label{conjindep} Let $G$ be a compact group. Assume that $a_1,a_2,\dots,a_n,b$ are independent and uniform elements in $G$. Then $a_1^b,a_2^b,\dots,a_n^b,b$ are also independent and uniform in $G$.
\end{lemma}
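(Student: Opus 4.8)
The plan is to show that the joint distribution of $(a_1^b,\dots,a_n^b,b)$ equals that of $(a_1,\dots,a_n,b)$ by exploiting the translation invariance of the Haar measure. The key observation is that conjugation by a fixed element is a measure-preserving bijection, and the real content is decoupling the conjugating element $b$ from the conjugated elements.

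First I would condition on the value of $b$. Fix $b=h\in G$ and consider the map $\Phi_h:G^n\to G^n$ defined by $\Phi_h(x_1,\dots,x_n)=(x_1^h,\dots,x_n^h)=(h^{-1}x_1h,\dots,h^{-1}x_nh)$. Since $x\mapsto h^{-1}xh$ is a composition of a left translation and a right translation, and the Haar measure $\mu_G$ is both left and right invariant, each coordinate map is measure-preserving; hence $\Phi_h$ preserves the product measure $\mu_G^n$ on $G^n$. Therefore, conditional on $b=h$, the vector $(a_1^b,\dots,a_n^b)$ has distribution $\mu_G^n$, which is exactly the conditional distribution of $(a_1,\dots,a_n)$ given $b=h$ — namely independent uniform, with no dependence on $h$.

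Next I would assemble the unconditional statement. Because the conditional law of $(a_1^b,\dots,a_n^b)$ given $b=h$ is $\mu_G^n$ for \emph{every} $h$, it does not depend on $h$; this gives both that $(a_1^b,\dots,a_n^b)$ is independent of $b$ and that its marginal is $\mu_G^n$. Combined with the fact that $b$ itself is uniform, the full vector $(a_1^b,\dots,a_n^b,b)$ has law $\mu_G^n\otimes\mu_G=\mu_G^{n+1}$, which is precisely the law of $(a_1,\dots,a_n,b)$, i.e.\ independent and uniform. To make this rigorous one checks equality of the pushforward measures by integrating test functions: for bounded measurable $F$ on $G^{n+1}$, write $\int F(a_1^b,\dots,a_n^b,b)\,d\mu_G^{n+1}$ as an iterated integral, apply the change of variables $\Phi_h$ in the inner integral for each fixed $h=b$, and conclude that the result equals $\int F\,d\mu_G^{n+1}$.

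I do not expect a serious obstacle here, as the statement is essentially a bookkeeping consequence of bi-invariance of Haar measure and Fubini's theorem. The one point requiring a little care is the measurability of the map $(x,h)\mapsto h^{-1}xh$ (so that Fubini applies and the conditioning argument is legitimate); this follows from joint continuity of multiplication and inversion in the topological group $G$. Beyond that, the argument is purely formal.
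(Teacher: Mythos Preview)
Your proposal is correct and follows essentially the same approach as the paper: condition on $b$, use bi-invariance of Haar measure to see that conjugation by a fixed element preserves $\mu_G$ (hence $\mu_G^n$), and then integrate out $b$. You simply supply more detail (Fubini, measurability) than the paper's terse version, but the argument is the same.
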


\begin{proof} From the left and right invariance of the Haar measure we have that the map $g\to g^b$ preserves the Haar measure for every fix $b$. It follows that for every fixed $b$ the random variables $a_1^b,a_2^b,\dots,a_k^b$ are independent and uniformly distributed in $G$. Since $b$ is chosen uniformly the proof is complete.
\end{proof} 

\begin{proposition}\label{samesure} We have $\nu_{G,k}=\nu'_{G,k}$ for every compact group $G$ and $k\in\mathbb{N}$.
\end{proposition}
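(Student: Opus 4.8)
The plan is to prove that the two measures coincide by induction on $k$, exploiting Lemma \ref{homeqlem} together with the conjugation-invariance established in Lemma \ref{conjindep}. The measure $\nu_{G,k}$ is the pushforward under $(a_0,a_1,\dots,a_k)\mapsto c$ of the product Haar measure, where $c(v)=a_0\prod_{i=1}^k a_i^{v_i}$; the measure $\nu'_{G,k}$ is the pushforward under $(a_{i,j})_{i\in[k],j\in\{0,1\}}\mapsto c'$ where $c'(v)=\prod_{i=1}^k a_{i,v_i}$. So the task is to show that when the defining elements are chosen independently and uniformly, the resulting laws on $G^{\{0,1\}^k}$ agree.

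The key observation is that the recursive reduction in the proof of Lemma \ref{homeqlem} is not just an existence statement but can be read as an explicit change of variables, and the point is to check that this change of variables preserves the uniform product measure. Concretely, following the inductive step of that lemma, one passes from the general-cube parameters $(a_{i,j})$ to the simple-cube parameters via the rule $c:=c'a_{n,0}$, $h_i:=h_i'^{\,a_{n,0}}$ for $i\in[n-1]$, and $h_n:=a_{n,0}^{-1}a_{n,1}$. First I would set up the induction with base case $k=1$: there $c(v)=a_0 a_1^{v_1}$ versus $c'(v)=a_{1,v_1}$, and the map $(a_{1,0},a_{1,1})\mapsto(a_{1,0},a_{1,0}^{-1}a_{1,1})=(a_0,a_1)$ preserves Haar measure by left invariance, so $\nu_{G,1}=\nu'_{G,1}$.

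For the inductive step, assume $\nu_{G,n-1}=\nu'_{G,n-1}$. Drawing $(a_{i,j})$ independently and uniformly, the first $n-1$ columns produce, by the induction hypothesis, a distribution on $(c',h_1',\dots,h_{n-1}')$ that is exactly the uniform product distribution on $G^{n}$ (this is what $\nu_{G,n-1}=\nu'_{G,n-1}$ says, after unpacking via Lemma \ref{simpaffdef}), and crucially this tuple is independent of the last column $(a_{n,0},a_{n,1})$. Now the new parameters are obtained by conjugating $h_1',\dots,h_{n-1}'$ by the independent uniform element $a_{n,0}$ and by two left translations. Here is where I invoke Lemma \ref{conjindep}: conjugating the independent uniform elements $h_1',\dots,h_{n-1}'$ by the independent uniform $a_{n,0}$ keeps them independent and uniform, and keeps them independent of $a_{n,0}$ itself; left-translating $c'$ by $a_{n,0}$ and forming $a_{n,0}^{-1}a_{n,1}$ again yield uniform independent elements by invariance of Haar measure. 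Hence $(c,h_1,\dots,h_n)$ is uniform on $G^{n+1}$, which is precisely the statement that the general-cube law equals the simple-cube law in dimension $n$.

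The main obstacle I anticipate is bookkeeping the joint distribution correctly rather than any single step: one must verify that after the induction hypothesis is applied to the first $n-1$ columns, the full collection $(c',h_1',\dots,h_{n-1}',a_{n,0},a_{n,1})$ is jointly independent and uniform, so that Lemma \ref{conjindep} applies with $a_{n,0}$ as the conjugating element and the $h_i'$ as the elements being conjugated. The slightly delicate point is that $c'$ also gets multiplied by $a_{n,0}$, so one should treat $a_{n,0}$ as the single randomizing element that simultaneously left-translates $c'$, conjugates the $h_i'$, and (together with $a_{n,1}$) generates $h_n$; invariance of Haar measure handles the translations while Lemma \ref{conjindep} handles the conjugations, and their compatibility is exactly what makes the product measure invariant. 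Once this joint uniformity is confirmed, the equality of the two pushforward measures follows immediately, completing the induction.
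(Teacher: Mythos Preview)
Your proof is correct and follows essentially the same route as the paper: both argue by induction on $k$, use the induction hypothesis on the first $k-1$ columns to replace the general parameters by simple ones, then push the extra factor $a_{n,0}$ through via conjugation and invoke Lemma \ref{conjindep} together with translation invariance of Haar measure. The only (harmless) slip is that $c=c'a_{n,0}$ is a right translation of $c'$, not a left translation; since the Haar measure is bi-invariant this does not affect the argument.
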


\begin{proof} We go by induction on $k$. If $k=1$ then the statement says that if $a,b$ are two independent, uniform random element in $g$ then $a,ab$ are also independent and uniform. This follows from the property of the Haar measure that for every fixed $a$ the distribution of $ab$ is uniform. Assume that the statement is proved for $k-1\geq 1$. Then by induction we have that a $\nu'_{G,k}$ random cube $c$ is given by 
$$c(v)=a_0\prod_{i=1}^{k-1}a_i^{v_i}ba_k^{v_k}=ba_0^b\prod_{i=1}^{k-1}(a_i^b)^{v_i}a_k^{v_k}$$ where $a_0,a_1,\dots,a_k,b$ are chosen uniformly at random. Note here we use that $b,ba_k$ are two independent random elements in $G$ which is the case $k=1$. Now lemma \ref{conjindep} implies that conjugation with $b$ can be omitted from the above formula without changing the probability distribution. This completes the proof. 
\end{proof} 

\section{Uniformity norms on non-commutative groups}\label{non-comm-gowers}

The goal of this chapter is to introduce Gowers norms for functions on arbitrary finite groups or more generally compact topological groups. The definition is very similar to the Gowers norms: We evaluate the function on the vertices of a randomly chosen "cube" in the group $G$, we take the product of the function values with appropriate conjugations and then we take approprite power of the expectation of this product. 

\begin{definition}\label{non-comm-gow}  Let $G$ compact topological group with Haar measure $\mu_G$. Let $f:G\to\mathbb{C}$ be a bounded Borel measurable function on $G$ and let $n\in\mathbb{N}$. Then 
$$\|f\|_{U_n}:=\Bigl(\int_{a_0,a_1,\dots,a_n\in G} \prod_{v\in\{0,1\}^n}J^{h(v)}f(a_0a_1^{v_1}a_2^{v_2}\dots a_n^{v_n})~d\mu_G^{n+1}\Bigr)^{1/2^n}$$ where $J$ is the complex conjugation and $h(v):=\sum_{i\in [n]} v_i$ is the hight function on $\{0,1\}^n$. 
\end{definition}

To prove that $\|.\|_{U_n}$ is a norm on $L^\infty(G)$ for $n\geq 2$ we need the following non-commutative generalization of the Gowers inner product.

\begin{definition}\label{non-comm-prod}  Let $G$ be a compact topological group with Haar measure $\mu_G$. Let $n\in\mathbb{N}$ and let $\mathcal{F}=\{f_v:G\to\mathbb{C}\}_{v\in\{0,1\}^n}$ be a system of bounded Borel measurable function on $G$. Then their cubic product is defined as
$$(\mathcal{F})_{U_n}:=\int_{a_0,a_1,\dots,a_n\in G} \prod_{v\in\{0,1\}^n}J^{h(v)}f_v(a_0a_1^{v_1}a_2^{v_2}\dots a_n^{v_n})~d\mu_G^{n+1}$$ where $J$ and $h$ are as in definition \ref{non-comm-gow}.
\end{definition}

Observe that
\begin{equation}\label{alternativ-prod}
(\mathcal{F})_{U_n}=\int_{c\in G^{\{0,1\}^n}}\prod_{v\in\{0,1\}^n}J^{h(v)}f_v(c_v)~d\nu_{G,n}
\end{equation}
where $\nu_{G,n}$ is the uniform measure on $n$-dimensional cubes in $G$.

For a bounded Borel function $f:G\to\mathbb{C}$ let $[f]_n$ denote the function system $\{f_v=f\}_{v\in\{0,1\}^n}$ where each function is equal to $f$. With this notation we have that
$$\|f\|_{U_n}=([f]_n)_{U_n}^{1/2^n}.$$ For $i\in[n]$ and $j\in\{0,1\}$ let $q_{i,j}:\{0,1\}^n\to\{0,1\}^n$ denote the "face projection" which sets the $i$-th coordinate to $j$ and leaves the other coordinates unchanged. More precisely this means that $q_{i,j}(v)_i=j$ and $q_{i,j}(v)_s=v_s$ for $s\neq i$. Related to these operators we introduce the operators $Q_{i,j}$ with $i\in[n],j\in\{0,1\}$ acting on function systems of the form $\mathcal{F}=\{f_v\}_{v\in\{0,1\}^n}$ by 
$$Q_{i,j}(\mathcal{F})=\{g_v:=f_{q_{i,j}(v)}\}_{v\in\{0,1\}^n}.$$ 

Let us also introduce one co-dimensional faces of $\{0,1\}^n$ by $L_{i,j}=\{v:v\in\{0,1\}^n,v_i=j\}$ where $i\in[n]$ and $j\in\{0,1\}$. 

\begin{theorem}\label{innerprop} Let $\mathcal{F}=\{f_v:G\to\mathbb{C}\}_{v\in\{0,1\}^n}$ be a system of bounded Borel measurable functions on $G$ and let $n\geq 2$ be a natural number and $d\in[n],r\in\{0,1\}$. Then 
\begin{enumerate}
\item $(Q_{d,r}(\mathcal{F}))_{U_n}\geq 0$.
\item $|(\mathcal{F})_{U_n}|\leq (Q_{d,0}(\mathcal{F}))_{U_n}^{1/2}(Q_{d,1}(\mathcal{F}))_{U_n}^{1/2}$.
\item $|(\mathcal{F})_{U_n}|\leq\prod_{v\in\{0,1\}^n}\|f_v\|_{U_n}$.
\end{enumerate}
\end{theorem}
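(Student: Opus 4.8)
The plan is to prove (1) and (2) simultaneously from a single Cauchy--Schwarz inequality performed in the direction $d$, and then to obtain (3) by iterating (2) over all $n$ directions. The essential preparatory step is to pass from the simple-cube description of $(\mathcal{F})_{U_n}$ to the general-cube description: by Proposition \ref{samesure} we have $\nu_{G,n}=\nu'_{G,n}$, so by (\ref{alternativ-prod}) we may compute $(\mathcal{F})_{U_n}$ over a $\nu'_{G,n}$-random cube $c(v)=\prod_{i=1}^n a_{i,v_i}$ with the $a_{i,j}$ independent and uniform. This is exactly what makes the argument work in the non-commutative setting: the two faces $L_{d,0}$ and $L_{d,1}$ now carry the two genuinely independent factors $a_{d,0}$ and $a_{d,1}$, whereas the simple-cube form $a_0\prod_i a_i^{v_i}$ would place all of the $d$-direction randomness on a single face.

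Write each $v\in\{0,1\}^n$ as $(w,r)$ with $w\in\{0,1\}^{[n]\setminus\{d\}}$ and $r=v_d$, and set $b_w:=\prod_{i\neq d}a_{i,w_i}$ and $S_w:=\prod_{i>d}a_{i,w_i}$. A direct rearrangement gives $c(w,r)=b_w\,a_{d,r}^{S_w}$, so, after grouping the vertices of the cube into the pairs $\{(w,0),(w,1)\}$,
$$(\mathcal{F})_{U_n}=\mb{E}_{\text{base}}\ \mb{E}_{a_{d,0},a_{d,1}}\ \prod_w J^{h(w,0)}f_{(w,0)}(b_w a_{d,0}^{S_w})\,J^{h(w,1)}f_{(w,1)}(b_w a_{d,1}^{S_w}),$$
where $\mb{E}_{\text{base}}$ denotes the expectation over all $a_{i,j}$ with $i\neq d$. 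Crucially, once the base is fixed the first factor of each pair depends only on $a_{d,0}$ and the second only on $a_{d,1}$, and these two elements are independent; hence the inner expectation factors as $G_0\cdot G_1$ with $G_r:=\mb{E}_{a}\prod_w J^{h(w,r)}f_{(w,r)}(b_w a^{S_w})$. Now $h(w,1)=h(w,0)+1$, so applying $Q_{d,r}$ (which makes the functions at $(w,0)$ and $(w,1)$ both equal to $f_{(w,r)}$) and using that the complex conjugation $J$ exactly toggles the parity of $h$ in each factor, one checks that the two resulting brackets are $G_r$ and its complex conjugate; therefore $(Q_{d,r}(\mathcal{F}))_{U_n}=\mb{E}_{\text{base}}|G_r|^2\geq 0$, which is (1). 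Part (2) is then immediate from Cauchy--Schwarz applied to the base integral: $|(\mathcal{F})_{U_n}|=|\mb{E}_{\text{base}}G_0G_1|\leq(\mb{E}_{\text{base}}|G_0|^2)^{1/2}(\mb{E}_{\text{base}}|G_1|^2)^{1/2}$.

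For (3) I would iterate (2) in the directions $d=1,2,\dots,n$. Applying (2) in direction $1$, then in direction $2$ to each of the two resulting systems $Q_{1,r}(\mathcal{F})$, and so on, the $Q$-operators in distinct directions commute and each step is legitimate because (1) guarantees every quantity that appears is nonnegative. After $n$ steps one obtains $|(\mathcal{F})_{U_n}|\leq\prod_{\epsilon\in\{0,1\}^n}(Q_{n,\epsilon_n}\cdots Q_{1,\epsilon_1}(\mathcal{F}))_{U_n}^{1/2^n}$. The composite operator $Q_{n,\epsilon_n}\cdots Q_{1,\epsilon_1}$ sets every coordinate, so it sends $\mathcal{F}$ to the constant system $[f_\epsilon]_n$, whence $(Q_{n,\epsilon_n}\cdots Q_{1,\epsilon_1}(\mathcal{F}))_{U_n}=\|f_\epsilon\|_{U_n}^{2^n}$ and the product collapses to $\prod_\epsilon\|f_\epsilon\|_{U_n}$. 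I expect the only real obstacle to be the non-commutative bookkeeping in the second paragraph: the increments $a_{d,r}^{S_w}$ are conjugates that vary with $w$ and cannot be decoupled, so the point (and the reason Proposition \ref{samesure}, rather than a naive permutation symmetry of cubes, is the right tool) is to observe that the factorization into $G_0\cdot G_1$ needs only the independence of $a_{d,0}$ and $a_{d,1}$ across the two faces and never any independence of the conjugated factors within a single face; Lemma \ref{conjindep} can be invoked if one prefers to simplify these conjugations, but it is not strictly needed.
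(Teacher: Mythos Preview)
Your proposal is correct and follows essentially the same route as the paper: pass via Proposition~\ref{samesure} to the general-cube parametrization $c(v)=\prod_i a_{i,v_i}$, split off the $d$-direction so that the inner integral factors as $G_0\cdot G_1$, identify $(Q_{d,r}(\mathcal{F}))_{U_n}$ with $\mb{E}_{\text{base}}|G_r|^2$ to get (1), apply Cauchy--Schwarz for (2), and iterate over all $d\in[n]$ for (3). The only difference is cosmetic --- the paper leaves $\prod_{i=1}^n a_{i,v_i}$ untouched rather than rewriting it as $b_w\,a_{d,r}^{S_w}$, since the factorization into the two faces $L_{d,0},L_{d,1}$ is already visible without that rearrangement.
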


\begin{proof} By lemma \ref{samesure} and (\ref{alternativ-prod}) we have that $$(\mathcal{F})_{U_n}=\int_{G^{[n]\times\{0,1\}}} \prod_{v\in\{0,1\}^n}J^{h(v)}f_v(\prod_{i=1}^na_{i,v_i})~d\mu_G^{2n}.$$
\begin{equation}\label{gowinform2}
=\int_{G^{([n]\setminus\{d\})\times\{0,1\}}}\Bigl(\prod_{s\in\{0,1\}}\int_{a_{d,s}}\prod_{L_{d,s}}J^{h(v)}f_v(\prod_{i=1}^na_{i,v_i})~d\mu_G\Bigr)~d\mu_G^{2n-2}.
\end{equation}
This implies that 
\begin{equation}\label{gowinform3}
(Q_{d,r}(\mathcal{F}))_{U_n}=\int_{G^{([n]\setminus\{d\})\times\{0,1\}}}\Bigl|\int_{a_{d,r}}\prod_{L_{d,r}}J^{h(v)}f_v(\prod_{i=1}^na_{i,v_i})~d\mu_G\Bigr|^2~d\mu_G^{2n-2}\geq 0
\end{equation}
and thus the proof of the first claim is complete. Notice that equations (\ref{gowinform2}) and (\ref{gowinform3}) together with the Cauchy-Schwartz inequality imply the second claim. The third claim follows by iterating the second one for every $d\in[n]$. 
\end{proof}

\begin{lemma}[Seminorm property] For every $n\geq 2$ natural number and compact group $G$ and bounded Borel functions $f,g:G\to\mathbb{C}$ we have that
\begin{enumerate}
\item $\|f\|_{U_n}\geq 0$ 
\item $\|f+g\|_{U_n}\leq\|f\|_{U_n}+\|g\|_{U_n}$
\item $\|cf\|_{U_n}=\|f\|_{U_n}|c|$ for every $c\in\mathbb{C}$.
\end{enumerate}
\end{lemma}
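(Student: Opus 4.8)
The plan is to deduce all three seminorm properties from Theorem~\ref{innerprop}, exactly as one does in the classical commutative Gowers norm theory, using the multilinear cubic product $(\mathcal{F})_{U_n}$ as the central tool. The key observation is that $\|f\|_{U_n}^{2^n}=([f]_n)_{U_n}$, where $[f]_n$ is the constant function system, and that the cubic product is linear in each coordinate function $f_v$ separately.

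For claim (1), nonnegativity, I would note that $[f]_n=Q_{d,r}([f]_n)$ for any choice of $d,r$, since replacing $f$ by $f$ after a face projection changes nothing when every function in the system is already $f$. Hence $\|f\|_{U_n}^{2^n}=([f]_n)_{U_n}=(Q_{d,r}([f]_n))_{U_n}\geq 0$ by part~(1) of Theorem~\ref{innerprop}. Taking the real $2^n$-th root gives $\|f\|_{U_n}\geq 0$. For claim (3), homogeneity, I would use multilinearity: scaling $f$ by $c$ scales each of the $2^n$ factors in the integrand, but half the vertices carry a complex conjugation $J^{h(v)}$. Counting that exactly $2^{n-1}$ vertices have odd height and $2^{n-1}$ have even height, the total scalar pulled out is $c^{2^{n-1}}\bar{c}^{2^{n-1}}=|c|^{2^n}$, so $\|cf\|_{U_n}^{2^n}=|c|^{2^n}\|f\|_{U_n}^{2^n}$, and the root gives the result.

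The main work is the triangle inequality (2), and this is where I would follow the standard Gowers argument. Expanding $\|f+g\|_{U_n}^{2^n}=([f+g]_n)_{U_n}$ by multilinearity yields a sum of $2^{2^n}$ cubic products $(\mathcal{F})_{U_n}$, one for each way of choosing $f$ or $g$ at each of the $2^n$ vertices. Part~(3) of Theorem~\ref{innerprop} bounds each such term: $|(\mathcal{F})_{U_n}|\leq\prod_{v}\|f_v\|_{U_n}$, where each factor is $\|f\|_{U_n}$ or $\|g\|_{U_n}$ according to the choice at $v$. Summing these bounds, the total collapses by the binomial theorem to $(\|f\|_{U_n}+\|g\|_{U_n})^{2^n}$, giving $\|f+g\|_{U_n}^{2^n}\leq(\|f\|_{U_n}+\|g\|_{U_n})^{2^n}$ and hence the triangle inequality after taking $2^n$-th roots.

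The only genuine subtlety, and the step I expect to require the most care, is ensuring that part~(3) of Theorem~\ref{innerprop} applies to \emph{mixed} systems in which different vertices carry different functions, not merely to constant systems. Reading the theorem, it is stated for an arbitrary system $\mathcal{F}=\{f_v\}$, so this is exactly what is needed; the non-commutativity of $G$ is already fully absorbed into the proof of Theorem~\ref{innerprop} via Proposition~\ref{samesure}, and no further commutativity issues arise in the present deduction. Thus, apart from bookkeeping with the conjugations $J^{h(v)}$, the argument is formally identical to the abelian case.
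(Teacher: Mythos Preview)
Your proposal is correct and follows essentially the same route as the paper: nonnegativity via $[f]_n=Q_{d,r}([f]_n)$ and part~(1) of Theorem~\ref{innerprop}, the triangle inequality via multilinear expansion of $([f+g]_n)_{U_n}$ into $2^{2^n}$ terms bounded by part~(3) and summed by the binomial theorem, and homogeneity via multilinearity. Your treatment of homogeneity is in fact slightly more careful than the paper's, which simply writes $([cf]_n)_{U_n}=c^{2^n}([f]_n)_{U_n}$ without tracking the conjugations; your count of $2^{n-1}$ conjugated and $2^{n-1}$ unconjugated factors giving $|c|^{2^n}$ is the correct justification.
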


\begin{proof} The first statement follows from the fact that $[f]_n=Q_{1,0}([f]_n)$ and theorem \ref{innerprop} by
$$\|f\|_{U_n}=([f]_n)_{U_n}^{1/2^n}=(Q_{1,0}([f]_n))_{U_n}^{1/2^n}\geq 0.$$
The second and the third statement follows from the multi-linear property of the form $(.)_{U_k}$. The second statement follows from $([cf]_n)_{U_n}=c^{2^n}([f]_n)_{U_n}$. The third statement follows from the decomposition of $([f+g]_n)_{U_n}$ into $2^{2^n}$ terms using multi-linearity. Each term is of the form $(\{h_v\}_{v\in\{0,1\}^n})_{U_n}$ with $h_v\in\{f,g\}$ for every $v$ and each possible configuration arises exactly once. By applying the third statement of theorem \ref{innerprop} we obtain that
$$\|f+g\|_{U_n}^{2^n}=([f+g]_n)_{U_n}\leq\sum_{i=0}^{2^n}{{2^n}\choose{i}}\|f\|_{U_n}^i\|g\|_{U_n}^{2^n-i}=(\|f\|_{U_n}+\|g\|_{U_n})^{2^n}.$$
\end{proof}

\begin{lemma}[Monotonicity] For $2<n$ natural numbers and bounded Borel function $f:G\to\mathbb{C}$ we have that $$\|f\|_{U_{n-1}}\leq\|f\|_{U_n}.$$
\end{lemma}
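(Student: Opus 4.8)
The plan is to deduce the inequality from the third (Gowers--Cauchy--Schwarz) bound of Theorem \ref{innerprop} by feeding it a carefully mixed function system — the non-commutative analogue of the standard ``inflation'' trick for Gowers norms. Concretely, I would single out the last coordinate and define a system $\mathcal{F}=\{f_v\}_{v\in\{0,1\}^n}$ by setting $f_v=f$ on the bottom face $\{v:v_n=0\}$ and $f_v=\mathbf{1}$ (the constant function equal to $1$) on the top face $\{v:v_n=1\}$. The whole argument then rests on evaluating $(\mathcal{F})_{U_n}$ in two different ways: directly, and via the Cauchy--Schwarz bound.

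For the direct evaluation I would use the simple-cube form of Definition \ref{non-comm-prod}, writing $c(v)=a_0\prod_{i=1}^n a_i^{v_i}$ and splitting the product over $\{0,1\}^n$ into the two faces $v_n=0$ and $v_n=1$. On the top face every factor is $J^{h(v)}\mathbf{1}(c_v)=1$, so only the bottom face contributes; moreover the surviving integrand no longer involves $a_n$, and since $\mu_G$ is a probability measure the integration over $a_n$ is harmless. The value at a bottom-face vertex $(w,0)$ with $w\in\{0,1\}^{n-1}$ is $a_0\prod_{i=1}^{n-1}a_i^{w_i}$ and $h((w,0))=h(w)$, so what remains is exactly the simple-cube integral defining $([f]_{n-1})_{U_{n-1}}$. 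Hence $(\mathcal{F})_{U_n}=([f]_{n-1})_{U_{n-1}}=\|f\|_{U_{n-1}}^{2^{n-1}}$, which is a nonnegative real. Applying the third claim of Theorem \ref{innerprop} to $\mathcal{F}$, the $2^{n-1}$ bottom-face vertices each contribute $\|f\|_{U_n}$ and the $2^{n-1}$ top-face vertices each contribute $\|\mathbf{1}\|_{U_n}$; a one-line computation gives $\|\mathbf{1}\|_{U_n}^{2^n}=([\mathbf{1}]_n)_{U_n}=\int 1\,d\nu_{G,n}=1$, so $\|\mathbf{1}\|_{U_n}=1$. Combining, $\|f\|_{U_{n-1}}^{2^{n-1}}=|(\mathcal{F})_{U_n}|\le\|f\|_{U_n}^{2^{n-1}}$, and taking $2^{n-1}$-th roots gives the claim.

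The only point that demands genuine care is the reduction $(\mathcal{F})_{U_n}=\|f\|_{U_{n-1}}^{2^{n-1}}$, where one must verify that the conjugation weights $J^{h(v)}$ and the left-to-right cube ordering cause no trouble once the top face is frozen. This is precisely why freezing to the constant $\mathbf{1}$ (rather than, say, using a face-projection system $Q_{n,j}$) is convenient: it trivialises all top-face factors and eliminates every occurrence of $a_n$ from the integrand, so that the non-commutativity is entirely absorbed and no residual conjugation survives the integration over $a_n$. I expect no real obstacle beyond this bookkeeping. Finally, I would remark that the hypothesis $2<n$ is exactly what guarantees $n-1\ge 2$, so that Theorem \ref{innerprop} applies at level $n-1$ and $([f]_{n-1})_{U_{n-1}}$ is a genuine nonnegative real, making $\|f\|_{U_{n-1}}$ a well-defined seminorm in the first place.
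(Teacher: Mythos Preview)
Your argument is correct and is a genuinely different route from the paper's. The paper does not introduce the mixed system with $\mathbf{1}$ on the top face; instead it works directly with $\mathcal{F}=[f]_n$ and the general-cube representation (\ref{gowinform3}) with $d=n$, $r=0$. That formula exhibits $\|f\|_{U_n}^{2^n}=(Q_{n,0}([f]_n))_{U_n}$ as an $L^2$-norm squared, and a single Cauchy--Schwarz ($\|g\|_2\ge\|g\|_1$ on a probability space) bounds its square root below by the iterated integral $\int_{a_{n,0}}\int_{G^{[n-1]\times\{0,1\}}}(\cdots)$. The paper then observes that for each fixed $a_{n,0}$ the inner integral equals $\|f\|_{U_{n-1}}^{2^{n-1}}$ (the extra right factor $a_{n,0}$ is absorbed by right-invariance of the Haar measure), yielding $\|f\|_{U_n}^{2^{n-1}}\ge\|f\|_{U_{n-1}}^{2^{n-1}}$.

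Your approach packages the same idea more cleanly: by freezing the top face to $\mathbf{1}$ you avoid the intermediate $L^2$--$L^1$ step and the Haar-invariance substitution, and simply quote Theorem~\ref{innerprop}(3) as a black box. The paper's route is slightly more hands-on but has the mild advantage of using only the second claim of Theorem~\ref{innerprop} (one Cauchy--Schwarz) rather than the iterated third claim. One small quibble: you say the hypothesis $n>2$ is needed ``so that Theorem~\ref{innerprop} applies at level $n-1$''; in fact you apply Theorem~\ref{innerprop} only at level $n$. What $n-1\ge 2$ buys you is the nonnegativity $([f]_{n-1})_{U_{n-1}}\ge 0$ (equivalently, that $\|f\|_{U_{n-1}}$ is a well-defined nonnegative real), which you correctly invoke to drop the absolute value.
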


\begin{proof} Equation (\ref{gowinform3}) with $d=n,r=0$ and the Cauchy-Schwartz inequality implies that
$$(Q_{n,0}(\mathcal{F}))_{U_n}^{1/2}\geq\int_{G^{[n-1]\times\{0,1\}}}\int_{a_{n,0}}\prod_{L_{n,0}}J^{h(v)}f_v(\prod_{i=1}^na_{i,v_i})~d\mu_G~d\mu_G^{2n-2}=$$
$$\int_{a_{n,0}}\int_{G^{[n-1]\times\{0,1\}}}\prod_{L_{n,0}}J^{h(v)}f_v(\prod_{i=1}^na_{i,v_i})~d\mu_G^{2n-2}~d\mu_G.$$ 
Notice that if $\mathcal{F}=[f]_n$ then the inner integral is equal to $\|f\|_{U_{n-1}}^{2^{n-1}}$ for every fixed $a_{n,0}\in G$. We obtain in this case that $$\|f\|_{U_n}^{2^{n-1}}=(Q_{n,0}([f]_n))_{U_n}^{1/2}\geq\|f\|_{U_{n-1}}^{2^{n-1}}.$$ This completes the proof.
\end{proof}

\begin{definition} Let $f,g:G\to\mathbb{C}$ be a bounded Borel functions. Then we denote by $f\star g$ the function on $G$ whose value at $x$ is equal to $\int_{y\in G}f(xy)\overline{g}(y)~d\mu_G$.
\end{definition}

\begin{lemma} Let $f:G\to\mathbb{C}$ be a bounded Borel Function. Then $$\|f\|_{U_2}=\|f\star f\|_2^{1/2}.$$
\end{lemma}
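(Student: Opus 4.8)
The plan is to compute $\|f\|_{U_2}^4 = ([f]_2)_{U_2}$ directly from the definition and recognize the result as $\|f\star f\|_2^2$. First I would write out the cubic product for $n=2$ using the simple-cube parametrization: with $a_0,a_1,a_2$ chosen independently and uniformly, the four vertices of the cube are $a_0$, $a_0a_1$, $a_0a_2$, $a_0a_1a_2$, carrying heights $0,1,1,2$ respectively, so that
\[
\|f\|_{U_2}^4=\int_{a_0,a_1,a_2\in G} f(a_0)\,\overline{f(a_0a_1)}\,\overline{f(a_0a_2)}\,f(a_0a_1a_2)\,d\mu_G^3.
\]

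Next I would reorganize this integral by the substitution $x:=a_0a_1$ and $y:=a_0a_2$, or more cleanly by noticing the product structure. The key observation is that $f(a_0)\overline{f(a_0a_1)}$ and $f(a_0a_2)\overline{f(a_0a_1a_2)}$ (after taking conjugates appropriately) are each of the form that, upon integrating out $a_0$ with $a_1$ fixed, produce a value of $\overline{f\star f}$ or $f\star f$. Concretely, I would fix $a_1$ and integrate over $a_0$ and $a_2$ separately: the integral factors because the $a_0$-integral and the $a_2$-integral decouple once $a_1$ is held fixed. Writing $g_{a_1}(x):=\int_{y}f(xy)\overline{f(xa_1 y)}\,d\mu_G$ is the wrong grouping; instead I would group the vertices as $\{f(a_0),\overline{f(a_0a_2)}\}$ sharing the variable difference $a_2$, and $\{\overline{f(a_0a_1)}, f(a_0a_1a_2)\}$ sharing the same $a_2$. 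With $a_1$ fixed, integrating over $a_0$ gives $\int_{a_0} f(a_0)\overline{f(a_0a_2)}\,d\mu_G = (f\star f)(a_2^{-1})$ after a variable change, and similarly the other pair gives $\overline{(f\star f)(a_2^{-1})}$, whence the $a_2$-integral yields $\int_{a_2}|(f\star f)(a_2^{-1})|^2\,d\mu_G = \|f\star f\|_2^2$ by right-invariance of Haar measure; the remaining $a_1$-integral is then trivial.

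The main obstacle I anticipate is bookkeeping the variable substitutions correctly in the non-commutative setting, since the order of factors matters and one must verify that the relevant change of variables (e.g. $z\mapsto z a_2$ or $z\mapsto a_2 z$) genuinely preserves the Haar measure and lands the integrand in the exact form $\int f(x\,\cdot)\overline{f(\cdot)}$ defining $f\star f$. I would handle this by invoking the left- and right-translation invariance of $\mu_G$ explicitly at each substitution, much as in Lemma \ref{conjindep} and Proposition \ref{samesure}. A secondary check is that the heights $h(v)$ assign complex conjugation to precisely the two vertices of odd weight, which is what makes the two paired inner integrals complex-conjugate to one another and thus produces a genuine squared $L^2$-norm rather than a bilinear pairing. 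Once the grouping and the measure-preservation are pinned down, the identity $\|f\|_{U_2}^4=\|f\star f\|_2^2$, and hence $\|f\|_{U_2}=\|f\star f\|_2^{1/2}$, follows immediately.
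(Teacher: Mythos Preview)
Your approach has two genuine gaps that non-commutativity does not forgive. First, the factorization fails as stated: with $a_1$ fixed, \emph{all four} factors depend on $a_0$, so you cannot integrate the pair $f(a_0)\overline{f(a_0a_2)}$ over $a_0$ separately from $\overline{f(a_0a_1)}f(a_0a_1a_2)$. The decoupling only works after the substitution $b:=a_0a_1$, which makes $(a_0,b,a_2)$ independent uniform and absorbs the $a_1$-integration entirely (so there is no ``remaining $a_1$-integral''). Second, even granting that fix, your identity $\int_{a_0}f(a_0)\overline{f(a_0a_2)}\,d\mu_G=(f\star f)(a_2^{-1})$ is false for nonabelian $G$: substituting $y=a_0a_2$ gives $\int_y f(ya_2^{-1})\overline{f(y)}\,d\mu_G$ with \emph{right} multiplication by $a_2^{-1}$, whereas $(f\star f)(a_2^{-1})=\int_y f(a_2^{-1}y)\overline{f(y)}\,d\mu_G$ has \emph{left} multiplication. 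Your route thus lands on the $L^2$-norm of a right-convolution, and identifying that with $\|f\star f\|_2$ would require a separate argument.

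The paper sidesteps both issues by invoking formula~(\ref{gowinform3}) (the general-cube parametrization, justified by Proposition~\ref{samesure}) with $\mathcal{F}=[f]_2$, $d=2$, $r=0$: the absolute square is already built in, and the inner integral $\int_{a_{2,0}}f(a_{1,0}a_{2,0})\overline{f(a_{1,1}a_{2,0})}\,d\mu_G$ becomes exactly $(f\star f)(a_{1,0}a_{1,1}^{-1})$ after the substitution $y=a_{1,1}a_{2,0}$, since $a_{1,0}a_{2,0}=(a_{1,0}a_{1,1}^{-1})y$ is genuinely left multiplication. That choice of parametrization is precisely what makes the lemma a two-line computation in the non-commutative case.
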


\begin{proof} If we apply formula (\ref{gowinform3}) for $\mathcal{F}=[f]_2$ and $d=2,r=0$ we obtain that
$$\|f\|_{U_2}^4=\int_{a_{1,0},a_{1,1}\in G}\Bigl|\int_{a_{2,0}\in G} f(a_{1,0}a_{2,0})\overline{f}(a_{1,1}a_{2,0})~d\mu_G\Bigr|^2~d\mu_G^2=$$
$$=\int_{a_{1,0},a_{1,1}\in G}\Bigl|(f*f)(a_{1,0}a_{1,1}^{-1})|^2~d\mu_G^2=\|f\star f\|_2^2.$$
\end{proof}

\begin{corollary} Let $f:G\to\mathbb{C}$ be a bounded Borel Function. The $\|f\|_{U_2}=0$ if and only if $f$ is almost everywhere $0$.
\end{corollary}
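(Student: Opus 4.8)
The plan is to reduce everything to the preceding lemma, which gives $\|f\|_{U_2}=\|f\star f\|_2^{1/2}$. Consequently $\|f\|_{U_2}=0$ is equivalent to $\|f\star f\|_2=0$, which in turn says exactly that the autocorrelation function $f\star f$ vanishes $\mu_G$-almost everywhere. So it suffices to prove that $f\star f=0$ a.e. if and only if $f=0$ a.e. One direction is immediate: if $f=0$ a.e. then, since left translation preserves the Haar null sets, for every $x$ the integrand $y\mapsto f(xy)\overline{f}(y)$ vanishes a.e. in $y$, so $f\star f\equiv 0$ and hence $\|f\|_{U_2}=0$.

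For the converse the key observation is that evaluating the autocorrelation at the identity recovers the $L^2$-mass of $f$: since $\mu_G$ is a probability measure and $f$ is bounded (hence in $L^2(G)$), we have $(f\star f)(1)=\int_{y\in G}f(y)\overline{f}(y)\,d\mu_G=\|f\|_2^2$. Thus it is enough to show that $f\star f=0$ a.e. forces $(f\star f)(1)=0$. I would do this by arguing that $f\star f$ is in fact a \emph{continuous} function on $G$. Writing $(T_xf)(y):=f(xy)$, we have $(f\star f)(x)=\langle T_xf,f\rangle_{L^2}$, so continuity of $f\star f$ follows from the strong continuity of the translation action $x\mapsto T_xf$ on $L^2(G)$ (translation is an isometry of $L^2$, continuous functions are uniformly continuous on the compact group $G$ and dense in $L^2$, so the action is strongly continuous by approximation). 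A continuous function that vanishes $\mu_G$-almost everywhere must vanish identically, because the Haar measure of a compact group has full support; applying this to $f\star f$ gives $(f\star f)(1)=0$, hence $\|f\|_2^2=0$ and $f=0$ a.e.

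The main obstacle here is establishing the continuity of $f\star f$, i.e. the strong continuity of translation on $L^2(G)$; everything else is formal. If one prefers to avoid invoking this standard fact directly, an alternative is to regularise $f$ by convolving with a continuous approximate identity and then pass to the limit, but the translation-continuity argument is the cleanest route.
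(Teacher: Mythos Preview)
Your argument is correct. The paper states this corollary without proof, so there is nothing to compare against; your route via the preceding lemma $\|f\|_{U_2}=\|f\star f\|_2^{1/2}$, continuity of $x\mapsto\langle T_xf,f\rangle$ from strong continuity of translation on $L^2(G)$, and full support of Haar measure on a compact group is exactly the standard way to fill in the details. The only points worth being explicit about are the ones you already flag: that $C(G)$ is dense in $L^2(G,\mu_G)$ (regularity of Haar measure on a compact Hausdorff group) and that $\mu_G$ has full support (finitely many translates of any nonempty open set cover $G$), both of which are routine.
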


\begin{corollary} We have for every natural number $n\geq 2$ that $(L^\infty(G),\|.\|_{U_n})$ is a normed space.
\end{corollary}

\section{Morphisms between discrete cubes}\label{cubmor}

We will use the short hand notation $\{0,1\}^n$ for $\{0,1\}^{[n]}$. We define $\{0,1\}^0$ as a one point set. For a cube $\{0,1\}^n$ let $p^n_i:\{0,1\}^n\to\{0,1\}$ denote the projection to the $i$-th coordinate. 

The goal of this chapter is to introduce two types of morphisms ($\mathcal{N}$ and $\mathcal{G}$) between cubes of the form $\{0,1\}^n$. Informally speaking, a map $\phi:\{0,1\}^n\to\{0,1\}^m$ is a morphism in $\mathcal{N}$ if for every $i\in [m]$ there is some $j\in [n]$ such that the $i$-th coordinate of $\phi(v)$ depends only on the $j$-th coordinate of $v$. Since this dependence is given by a function from $\{0,1\}$ to $\{0,1\}$ we have that there are $4$ possible dependences: The constant $0$ function, the constant $1$ function, the identity function and the $x\mapsto 1-x$ function. Note that if $v\mapsto\phi(v)_i$ is a constant function then $j$ is not uniquely determined by $i$ (it can be chosen to be any element in $[n]$). Morphisms in $\mathcal{G}$ are special morphisms in $\mathcal{N}$ in which $j$ is a monotonic function of $i$ if it is uniquely determined. The formal definition is the following.

\begin{definition}\label{cubmorph} A map $\phi:\{0,1\}^n\rightarrow\{0,1\}^m$ is a morphism in the category $\mathcal{N}$ if for every $i\in [m]$ we have that $p^m_i\circ\phi$ is either a constant function or there is $j\in[n]$ such that either $p^m_i\circ\phi=p^n_j$ or $p^m_i\circ\phi=1-p^n_j$. For such a morphism let us introduce the function $\gamma_\phi:[m]\to\mathbb{N}$ with $\gamma_\phi(i):=0$ if $p^m_i\circ\phi$ is a constant function and $\gamma_\phi(i):=j$ if $p^m_i\circ\phi=p^n_j$ or $p^m_i\circ\phi=1-p^n_j$. A map $\phi:\{0,1\}^n\rightarrow\{0,1\}^m$ is a morphism in the category $\mathcal{G}$ if $\phi$ is a morphism in $\mathcal{N}$ and $\gamma_\phi$ is monotonic on its support i.e. for every pair $1\leq i<j\leq m$ we have that either $\gamma_\phi(j)=0$ or $\gamma_\phi(i)\leq\gamma_\phi(j)$.
\end{definition}

As an example note that the map given by $\phi(x_1,x_2):=(0,x_1,x_2,1,x_1)$ is a morphism in $\mathcal{N}$ but it is not a morphism in $\mathcal{G}$. On the other hand the map $\phi'(x_1,x_2):=(0,x_1,x_1,1,x_2)$ is a morphism in both $\mathcal{N}$ and $\mathcal{G}$. For $n,m\in\mathbb{N}$ we denote by $\hom_{\mathcal{N}}(\{0,1\}^n,\{0,1\}^m)$ (resp. $\hom_{\mathcal{G}}(\{0,1\}^n,\{0,1\}^m)$) the set of morphisms in $\mathcal{N}$ (reps. $\mathcal{G}$). To check that both $\mathcal{N}$ and $\mathcal{G}$ are categories we need the following lemma. The proof is straightforward from the definitions. 

\begin{lemma} Let $\mathcal{C}$ be either $\mathcal{N}$ or $\mathcal{G}$. Then identity map $I_n:\{0,1\}^n\to\{0,1\}^n$ is in $\mathcal{C}$ for every $n\in\mathbb{N}$. If $\phi_1\in\hom_{\mathcal{C}}(\{0,1\}^n,\{0,1\}^m)$ and $\phi_2\in\hom_{\mathcal{C}}(\{0,1\}^m,\{0,1\}^k)$ then $\phi_2\circ\phi_1\in \hom_{\mathcal{C}}(\{0,1\}^n,\{0,1\}^k)$.
\end{lemma}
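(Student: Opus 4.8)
The plan is to treat the two claims separately and to reduce the whole argument to a single observation about how the coordinate-type functions $\gamma_\phi$ behave under composition. The identity claim is immediate: since $p^n_i \circ I_n = p^n_i$ for every $i \in [n]$, the map $I_n$ lies in $\mathcal{N}$ with $\gamma_{I_n}(i) = i$, and because $i \mapsto i$ is strictly increasing, $\gamma_{I_n}$ is monotonic on its (full) support, so $I_n \in \mathcal{G}$ as well.

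For the composition claim, set $\psi := \phi_2 \circ \phi_1$ and examine each coordinate function $p^k_\ell \circ \psi = (p^k_\ell \circ \phi_2) \circ \phi_1$ for $\ell \in [k]$. First I would record the key identity: extending $\gamma_{\phi_1}$ by the convention $\gamma_{\phi_1}(0) := 0$, I claim that $\psi \in \mathcal{N}$ with
$$\gamma_\psi = \gamma_{\phi_1} \circ \gamma_{\phi_2}.$$
This follows from a short case analysis on $p^k_\ell \circ \phi_2$. If it is constant, so is $p^k_\ell \circ \psi$, and both sides vanish. If $p^k_\ell \circ \phi_2 = p^m_j$ or $1 - p^m_j$ with $j = \gamma_{\phi_2}(\ell)$, then $p^k_\ell \circ \psi$ equals $p^m_j \circ \phi_1$ or its complement; substituting the three possibilities for $p^m_j \circ \phi_1$ (constant, $p^n_s$, or $1 - p^n_s$ with $s = \gamma_{\phi_1}(j)$) and using $1 - (1 - p^n_s) = p^n_s$ shows that $p^k_\ell \circ \psi$ is again constant or of the form $p^n_s$ or $1 - p^n_s$, with index exactly $\gamma_{\phi_1}(\gamma_{\phi_2}(\ell))$. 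Hence $\psi \in \mathcal{N}$ and the displayed formula holds.

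It then remains, for the case $\mathcal{C} = \mathcal{G}$, to verify that $\gamma_\psi = \gamma_{\phi_1} \circ \gamma_{\phi_2}$ is monotonic on its support. I would take $\ell_1 < \ell_2$ with $\gamma_\psi(\ell_2) \neq 0$ and show $\gamma_\psi(\ell_1) \leq \gamma_\psi(\ell_2)$. Since $\gamma_\psi(\ell_2) \neq 0$, neither $j_2 := \gamma_{\phi_2}(\ell_2)$ nor $\gamma_{\phi_1}(j_2)$ vanishes; monotonicity of $\gamma_{\phi_2}$ gives $j_1 := \gamma_{\phi_2}(\ell_1) \leq j_2$, and then either $j_1 = 0$ (so $\gamma_\psi(\ell_1) = \gamma_{\phi_1}(0) = 0$) or $0 < j_1 \leq j_2$ with $\gamma_{\phi_1}(j_2) \neq 0$, in which case monotonicity of $\gamma_{\phi_1}$ gives $\gamma_{\phi_1}(j_1) \leq \gamma_{\phi_1}(j_2)$. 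Either way $\gamma_\psi(\ell_1) \leq \gamma_\psi(\ell_2)$. I expect the only mildly delicate point to be bookkeeping the $\gamma = 0$ (constant) cases consistently — in particular the convention $\gamma_{\phi_1}(0) = 0$, which makes the composition formula uniform and lets both monotonicity hypotheses be applied cleanly; everything else is routine substitution.
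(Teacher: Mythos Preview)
Your proof is correct and is precisely the routine verification the paper has in mind: the paper's own proof consists of the single sentence ``The proof is straightforward from the definitions,'' and your argument simply unpacks that sentence by tracking $\gamma_\phi$ through identities and compositions. The key formula $\gamma_{\phi_2\circ\phi_1}=\gamma_{\phi_1}\circ\gamma_{\phi_2}$ (with the convention $\gamma_{\phi_1}(0)=0$) and the ensuing monotonicity check are exactly the details one would fill in, and you have handled the constant-coordinate cases cleanly.
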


We continue with a list of a few important morphisms between cubes.

\noindent{\bf Face embeddings:}~~For natural numbers $n\geq 1$ and $i\in [n],j\in\{0,1\}$ let $e^n_{i,j}:\{0,1\}^{n-1}\to\{0,1\}^n$ denote the embedding defined by $$e^n_{i,j}(x_1,x_2,\dots,x_{n-1}):=(x_1,x_2,\dots,x_{i-1},j,x_i,x_{i+1},\dots,x_{n-1}).$$
We have that $e^n_{i,j}\in\hom_{\mathcal{G}}(\{0,1\}^{n-1},\{0,1\}^n)$. 

\medskip

\noindent{\bf Simplicial embeddings:}~~For a vector $v\in\{0,1\}^n$ let $\supp{v}$ denote the support of $v$ i.e. the set of coordinates $i$ for which $v_i=1$. For $v\in\{0,1\}^n$ let $s_v$ denote the map from $\{0,1\}^{h(v)}$ to $\{0,1\}^n$ defined by the property that $p^n_i\circ s_v=0$ if $i\notin \supp{v}$ and $p^n_i\circ e^n_v:=p^{h(v)}_j$ if $i\in \supp{v}$ and $i$ is the $j$-th smallest element in $\supp{v}$. This definition guarantees that $s_v$ is a morphism in $\mathcal{G}$. 

\medskip

\noindent{\bf Projections:}~~We introduced projections to individual coordiantes at the beginning of this chapter. A generalization of this concept works for an arbitrary subset of coordinates. Let $T\subseteq [n]$ with $T=\{a_1,a_2,\dots,a_m\}$ where $a_1<a_2<\dots<a_m$. Let $p_T:\{0,1\}^n\to\{0,1\}^m$ be the map defined by $p_T(v)_i=v_{a_i}$. It is clear that $p_{[n]\setminus\{i\}}\circ e^n_{i,j}$ is the identity map on $\{0,1\}^{n-1}$. We have that $p_T\in\hom_{\mathcal{G}}(\{0,1\}^{n},\{0,1\}^m)$.

\medskip

\noindent{\bf Reflections:}~~Let $r_i:\{0,1\}^n\to\{0,1\}^n$ be the cube morphism which flips the $i$-th coordinate to the opposite value. We have that 
$$r_i(x_1,x_2,\dots,x_n):=(x_1,x_2,\dots,x_{i-1},1-x_i,x_{i+1},\dots,x_n).$$ It is clear that $r_i\in\hom_{\mathcal{G}}(\{0,1\}^n,\{0,1\}^n)$. The next lemma about refelctions shows that $\{0,1\}^n$ is transitive even in the category $\mathcal{G}$.

\begin{lemma}[Symmetry of cubes]\label{transim} Let $n\in\mathbb{N}$ be fixed. Then the group $R_n$ generated by the reflections $\{r_i\}_{i=1}^n$ is isomorphic to $(\mathbb{Z}/2\mathbb{Z})^n$ and acts transitively on $\{0,1\}^n$. Every element of $R_n$ is in $\hom_\mathcal{G}(\{0,1\}^n,\{0,1\}^n)$.
\end{lemma}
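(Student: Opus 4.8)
The plan is to identify $\{0,1\}^n$ with the abelian group $(\mathbb{Z}/2\mathbb{Z})^n$ under coordinatewise addition modulo $2$, so that each reflection $r_i$ becomes the translation $v\mapsto v+e_i$ by the $i$-th standard basis vector $e_i$. This reformulation makes all three assertions transparent. First I would record two elementary facts about the generators: each $r_i$ is an involution, since flipping the $i$-th coordinate twice restores it, so $r_i^2=I_n$; and $r_ir_j=r_jr_i$ for all $i,j$, since the flips act on disjoint coordinates. Consequently the assignment $e_i\mapsto r_i$ extends to a surjective group homomorphism $\Phi:(\mathbb{Z}/2\mathbb{Z})^n\to R_n$.

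The first claim then reduces to the injectivity of $\Phi$. For a subset $S\subseteq[n]$ the corresponding element $\Phi(\mathbf{1}_S)$ is the map that flips exactly the coordinates indexed by $S$, i.e.\ $v\mapsto v+\mathbf{1}_S$; this map fixes every $v$ precisely when $S=\emptyset$. Hence $\Phi$ has trivial kernel and $R_n\cong(\mathbb{Z}/2\mathbb{Z})^n$. Transitivity is immediate from the translation picture: given $v,w\in\{0,1\}^n$, the element flipping exactly the coordinates on which $v$ and $w$ differ, namely $\Phi(v+w)$, sends $v$ to $w$.

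For the last assertion I would invoke that $\mathcal{G}$ is a category closed under composition (the composition lemma stated above), together with the already noted fact that each generator $r_i$ lies in $\hom_\mathcal{G}(\{0,1\}^n,\{0,1\}^n)$; since every element of $R_n$ is a composition of generators, it again lies in $\hom_\mathcal{G}$. Alternatively, and more directly, for $\phi=\Phi(\mathbf{1}_S)$ one has $p^n_i\circ\phi=p^n_i$ when $i\notin S$ and $p^n_i\circ\phi=1-p^n_i$ when $i\in S$, so in either case $\gamma_\phi(i)=i$; thus $\gamma_\phi$ is the identity function, which is monotonic, confirming $\phi\in\hom_\mathcal{G}$.

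This lemma carries no genuine obstacle; the only point that requires a word of care is the injectivity of $\Phi$, where one must verify that distinct subsets of flipped coordinates really do yield distinct self-maps of $\{0,1\}^n$ rather than merely distinct formal words in the generators. The translation description settles this at once, and it is also what makes transitivity a one-line consequence rather than something needing the coordinate permutations that are unavailable in $\mathcal{G}$.
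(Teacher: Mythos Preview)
Your proof is correct and follows essentially the same approach as the paper: both identify the reflections with translations by standard basis vectors in $(\mathbb{Z}/2\mathbb{Z})^n$, obtain the explicit description of a general element as the map $(x_1,\dots,x_n)\mapsto(f_{v_1}(x_1),\dots,f_{v_n}(x_n))$ (in the paper's notation), and read off the isomorphism, transitivity, and membership in $\hom_\mathcal{G}$ from this description. Your version is somewhat more explicit about setting up the homomorphism $\Phi$ and checking its injectivity, and you offer the composition-closure argument as an alternative for the last claim, but the substance is the same.
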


\begin{proof} Let $f_0,f_1:\{0,1\}\to\{0,1\}$ be the functions $f_0(x)=x$ and $f_1(x)=1-x$ for $x=0,1$. The elements of $R_n$ are described by $$\phi_v(x_1,x_2,\dots,x_n)=(f_{v_1}(x_1),f_{v_2}(x_2),\dots,f_{v_n}(x_n))$$ where $v$ runs through $\{0,1\}^n$. Clearly every such map is in $\hom_\mathcal{G}(\{0,1\}^n,\{0,1\}^n)$ and $\phi_v\circ\phi_w=\phi_m$ where $m\equiv v+w\mod 2$. Transitivity follows from the fact that $f_v(0^n)=v$ holds for every $v\in\{0,1\}^n$.
\end{proof}

\medskip

\noindent{\bf Transpositions:}~~Let $t_{i,j}:\{0,1\}^n\to\{0,1\}^n$ be the map switching the $i$-th and $j$-th coordinates. This map is not in the category $\mathcal{G}$ but it is in $\mathcal{N}$. Their importance is that transpositions together with maps in $\mathcal{G}$ generate all maps in $\mathcal{N}$.

\medskip

Now wedescribe group theoretic interpretations of our categories $\mathcal{G}$ and $\mathcal{N}$.

\begin{proposition}\label{abhomeq} A map $\phi:\{0,1\}^n\rightarrow\{0,1\}^m$ is a morphism in $\mathcal{N}$ if and only if it extends to an affine morphism from $\mathbb{Z}^n$ to $\mathbb{Z}^m$.
\end{proposition}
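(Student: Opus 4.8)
The plan is to translate the combinatorial condition defining $\mathcal{N}$ into the coordinate data of an affine map, and then verify the two implications separately, working one output coordinate at a time. Recall that in the abelian setting an affine morphism $f:\mathbb{Z}^n\to\mathbb{Z}^m$ is a map of the form $f(x)=a+Mx$ with $a\in\mathbb{Z}^m$ and $M$ an integer $m\times n$ matrix, so that $f(x)_i=a_i+\sum_{j=1}^n M_{ij}x_j$. The proposition therefore amounts to the claim that $\phi$ lies in $\mathcal{N}$ precisely when each function $p^m_i\circ\phi$ agrees on $\{0,1\}^n$ with such an integer-affine expression taking values in $\{0,1\}$.

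For the forward direction I would build $f$ explicitly from the data $\gamma_\phi$. For each $i\in[m]$ the function $p^m_i\circ\phi$ falls into one of the four cases of Definition \ref{cubmorph}, and in each case I read off a row of $M$ together with an entry of $a$: the constant $0$ gives $a_i=0$ and a zero row; the constant $1$ gives $a_i=1$ and a zero row; $p^m_i\circ\phi=p^n_j$ gives $a_i=0$ and the row with a single $1$ in column $j$; and $p^m_i\circ\phi=1-p^n_j$ gives $a_i=1$ and the negative of that row. Assembling the rows produces an affine $f:\mathbb{Z}^n\to\mathbb{Z}^m$, and a one-line check in each of the four cases confirms that $f$ restricted to $\{0,1\}^n$ equals $\phi$. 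This direction is essentially bookkeeping.

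The substantive direction is the converse, where I would start from an affine extension $f(x)=a+Mx$ of $\phi$ and must recover the restrictive structure demanded by Definition \ref{cubmorph}. Fix $i\in[m]$ and write $c:=a_i$ and $b_j:=M_{ij}$, so that $g(v):=c+\sum_{j=1}^n b_j v_j\in\{0,1\}$ for every $v\in\{0,1\}^n$. Evaluating at $v=0$ forces $c\in\{0,1\}$; evaluating at single standard basis vectors $v=e_j$ forces $b_j\in\{-c,\,1-c\}$; and the key step is evaluating at pairs $v=e_j+e_k$ with $j\neq k$, which forces $c+b_j+b_k\in\{0,1\}$ and thereby rules out two distinct coordinates contributing simultaneously. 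Concretely, when $c=0$ one gets each $b_j\in\{0,1\}$ with no two of them equal to $1$, so at most one $b_j$ is nonzero and $g$ is either constant $0$ or equal to $p^n_j$; when $c=1$ one gets each $b_j\in\{-1,0\}$ with no two equal to $-1$, so $g$ is either constant $1$ or equal to $1-p^n_j$. In every case $p^m_i\circ\phi$ has the form required by Definition \ref{cubmorph}, whence $\phi\in\mathcal{N}$.

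I expect the only real content to sit in this pairwise evaluation: the single-coordinate evaluations merely pin down each $b_j$ individually, and it is the two-coordinate configurations that prevent more than one input coordinate from influencing a given output coordinate. That is precisely the obstacle, since without the $\{0,1\}$-valuedness constraint a genuinely affine map could of course depend on several coordinates at once; the point is that taking values in $\{0,1\}$ on the whole Boolean cube collapses each such dependence down to the monotone-free $\mathcal{N}$-structure.
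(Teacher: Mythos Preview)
Your proposal is correct and follows essentially the same approach as the paper: both treat one output coordinate at a time, build the affine extension row by row in the forward direction, and for the converse use evaluations at pairs $e_j+e_k$ to force each output coordinate to depend on at most one input coordinate. The only cosmetic difference is that the paper phrases the two-coordinate step as a cardinality argument (the four values $b_j,\,b_j+M_{i,j},\,b_j+M_{k,j},\,b_j+M_{i,j}+M_{k,j}$ would include at least three distinct numbers), whereas you do an explicit case split on $c\in\{0,1\}$; the content is the same.
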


\begin{proof} Assume that $\phi$ is a morphism in $\mathcal{N}$. Then for every $i\in [m]$ we have that $p^m_i\circ \phi$ is either a constant function or is equal to either $p^n_j(v)$ or $1-p^n_j(v)$. In each case $p^m_i\circ\phi$ extends to an affine morphism $f_i$ from $\mathbb{Z}^n$ to $\mathbb{Z}$. Then the map $f(x):=(f_1(x),f_2(x),\dots,f_m(x))$ from $\mathbb{Z}^n$ to $\mathbb{Z}^m$ is an extension of $\phi$ to an affine morphism. 

To see the other direction assume that $\phi:\{0,1\}^n\to\{0,1\}^m$ is the restriction of some affine morphism. This means that there is an integer matrix $M\in\mathbb{Z}^{n\times m}$ and an integer vector $b\in\mathbb{Z}^m$ such that $\phi(v)=vM+b$ holds for every $v\in\{0,1\}^m$. We claim that each column of $A$ has at most one non zero element. Assume by contradiction that both $M_{i,j}$ and $M_{k,j}$ are non zero for some $i\neq k$. Let $v_1,v_2,v_3,v_4$ be the $4$ possible vectors in $\{0,1\}^n$ with zeroes outside of the coordinates $i$ and $k$. We have that the $j$-th coordinates of $v_sM, s\in [4]$ are in some order $b_j,b_j+M_{i,j},b_j+M_{k,j},b_j+M_{i,j}+M_{k,j}$ among which there are at least $3$ different numbers. This contradicts the fact that $vM$ is $0-1$ vector for every $v\in\{0,1\}^n$. We obtained that for every $j\in [m]$ there is some $j'\in [n]$ such that $p^m_j\circ\phi$ is a function of $v_{j'}$. 
\end{proof}

For the following proposition let $F_k=\langle g_1,g_2\dots,g_k\rangle$ denote the free group in $k$ generators and let $$F_k^\square:=\{\prod_{i=1}^k g_i^{v_i}~:~v\in\{0,1\}^k\}\subset F_k.$$ Note that we use the convention that $\prod_{i=1}^k a_i$ denotes the "left to right" product $a_1a_2a_3\dots a_k$ in a general group. 

\begin{proposition}\label{homeq}  A map $\phi:\{0,1\}^n\rightarrow\{0,1\}^m$ is a morphism in $\mathcal{G}$ if and only if the map $f:F_n^\square\to F_m^\square$ defined by 
\begin{equation}\label{homeqeq}
f(\prod_{i=1}^n g_i^{v_i})=\prod_{i=1}^m g_i^{\phi(v)_i}
\end{equation}
extends to an affine morphism from $F_n$ to $F_m$. 
\end{proposition}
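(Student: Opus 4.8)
The plan is to unwind the phrase ``extends to an affine morphism'' into a single identity. On $F_n^\square$ the map in \eqref{homeqeq} is by definition $f=\tau_m\circ\phi\circ\tau_n^{-1}$, so an $F\in\aff{F_n,F_m}$ restricts to $f$ on $F_n^\square$ exactly when $F\circ\tau_n=\tau_m\circ\phi$, i.e. $F(\prod_{i=1}^n g_i^{v_i})=\prod_{i=1}^m g_i^{\phi(v)_i}$ for every $v\in\{0,1\}^n$. I prove the two directions separately, using the abelianization to dispose of the $\mathcal{N}$-content and reserving the free (non-commutative) structure for the monotonicity.

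For the ``if'' direction suppose such an $F$ exists. First I would recover $\phi\in\mathcal{N}$ by passing to abelianizations: writing $\mathrm{ab}_k\colon F_k\to\mathbb{Z}^k$, the map $\mathrm{ab}_m\circ F$ factors through $\mathrm{ab}_n$ as an affine morphism $\bar F\colon\mathbb{Z}^n\to\mathbb{Z}^m$ (the abelianization of $a\,m(x)\,b$ is again affine), and since $\mathrm{ab}_k\circ\tau_k$ is the inclusion $\{0,1\}^k\hookrightarrow\mathbb{Z}^k$ we get $\bar F|_{\{0,1\}^n}=\phi$; Proposition \ref{abhomeq} then yields $\phi\in\mathcal{N}$. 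The real content is the monotonicity of $\gamma_\phi$, and this is where non-commutativity enters. Assume for contradiction there are $i<i'$ in $[m]$ with $\gamma_\phi(i)=j$, $\gamma_\phi(i')=j'$ both nonzero and $j>j'$. I localize to a two-dimensional picture by composing with homomorphisms: post-compose with $\rho\colon F_m\to F_2=\langle h_1,h_2\rangle$ sending $g_i\mapsto h_1$, $g_{i'}\mapsto h_2$ and all other generators to $1$, and pre-compose with $\iota\colon F_2\to F_n$ sending $h_1\mapsto g_{j'}$, $h_2\mapsto g_j$ (legitimate since $j'<j$, so $\iota\circ\tau_2$ lands in $F_n^\square$). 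The composite $\psi=\rho\circ F\circ\iota$ is again affine, and a direct evaluation on $F_2^\square$ gives $\psi(\tau_2(w))=h_1^{w_2}h_2^{w_1}$ in the model case where both coordinates are of the plain type $v_{(\cdot)}$. Then $\psi(1)=1$, so $\psi$ is a homomorphism, yet it would satisfy $h_1h_2=\psi(h_1h_2)=\psi(h_1)\psi(h_2)=h_2h_1$, which is false in $F_2$. The remaining type cases (a coordinate of the form $1-v_{(\cdot)}$) reduce to this one after composing with reflections, which are themselves affine. This contradiction forces monotonicity, hence $\phi\in\mathcal{G}$.

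For the ``only if'' direction I would construct $F$ explicitly by induction on $m$, peeling off the last target coordinate. Dropping coordinate $m$ via the projection $p_{[m-1]}$ (a $\mathcal{G}$-morphism) gives $\phi'\in\mathcal{G}$ with, by induction, $\tau_{m-1}(\phi'(v))=a'\prod_j(w_j')^{v_j}$, and then $\tau_m(\phi(v))=\tau_{m-1}(\phi'(v))\,g_m^{\phi(v)_m}$. Monotonicity says that when $\gamma_\phi(m)=j_0\neq 0$ it is maximal among the nonzero $\gamma_\phi$-values, so every $w_j'$ with $j>j_0$ is trivial; this is exactly what lets the new rightmost factor $g_m^{\phi(v)_m}$ be merged into the product at the correct slot. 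When $\phi(v)_m=v_{j_0}$ one absorbs $g_m$ into $w_{j_0}$ using $(w')^{v}g^{v}=(w'g)^{v}$ for $v\in\{0,1\}$; a constant or $1-v_{j_0}$ last coordinate is handled by commuting the constant part to the left at the cost of conjugating the $w_j'$, exactly as in the proof of Lemma \ref{homeqlem}. This produces $a,w_1,\dots,w_n\in F_m$ with $\tau_m(\phi(v))=a\prod_j w_j^{v_j}$ for all $v$, and then $F(x):=a\,h(x)$ with $h\colon F_n\to F_m$ the homomorphism $g_j\mapsto w_j$ is an affine extension of $f$.

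I expect the main obstacle to be precisely this forward computation: tracking the conjugations introduced when a constant or reflected coordinate is moved past the variable factors, while checking that monotonicity guarantees each variable factor lands in its correct position in the left-to-right product. Conceptually the sharp point is the necessity of monotonicity, captured by the obstruction $h_1h_2\neq h_2h_1$; but the delicate part of the write-up is the non-commutative bookkeeping in the sufficiency direction.
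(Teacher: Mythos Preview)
Your proposal is correct and lands on the same two key ideas as the paper --- abelianize to recover $\phi\in\mathcal{N}$, then exploit non-commutativity of $F_2$ to force monotonicity --- but the execution differs in both directions. For the forward (``only if'') direction, the paper does not induct on $m$: it uses the monotonicity of $\gamma_\phi$ to partition $[m]$ into consecutive blocks $P_1,\dots,P_n$ with $\gamma_\phi(P_i)\subseteq\{0,i\}$, observes that each block-product $\prod_{j\in P_i}g_j^{\phi(v)_j}$ depends only on $v_i$, and then invokes Lemma~\ref{homeqlem} once as a black box to pass from the ``general cube'' form $\prod_i a_{i,v_i}$ to the ``simple cube'' form $c\prod_i h_i^{v_i}$. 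This avoids entirely the case analysis (plain/constant/reflected last coordinate) and the attendant conjugation bookkeeping that you flag as the main obstacle; your induction essentially reproves Lemma~\ref{homeqlem} inside the argument. For the backward (``if'') direction, the paper post-composes with a map $F_m\to F_2$ (as you do) but does not pre-compose with an embedding $F_2\to F_n$; instead it freezes the irrelevant source coordinates to $0$ and applies the affine characterization of Lemma~\ref{charaff} (the identity $f(a)f(b)^{-1}f(c)f(d)^{-1}=1$) to get a single equation in $F_2$ that covers all four combinations of plain/reflected types at once, rather than reducing to a ``model case'' via reflections. Both routes are valid; the paper's are shorter because they reuse Lemmas~\ref{charaff} and~\ref{homeqlem} rather than unfolding them.
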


\begin{proof} First assume that $\phi$ is a morphism in $\mathcal{G}$. Let $f':\{0,1\}^n\to F_m^\square$ be the function defined by $f'(v):=\prod_{i=1}^m g_i^{\phi(v)_i}$.  Let us partition $[m]$ into $n$ (possibly empty) consecutive intervals $P_1,P_2,\dots,P_n$ such that for $i\in [n]$ and $a\in P_i$ we have $\tau_\phi(a)\in\{0,i\}$ (we use the function $\tau_\phi$ introduced in definition \ref{cubmorph}). The existence of such a partition follows from the monotonicity of $\tau_\phi$ on its support. For $i\in [n]$ let $f'_i(v):=\prod_{j\in P_j} g_j^{\phi(v)_j}$ (empty products are defined to be equal to the identity element). Observe that for each $i\in [n]$ the function $f'_i(v)$ depends only on $v_i$ and so it can take two different values, say $a_{i,0}\in F_m$ if $v_i=0$ and $a_{i,1}\in F_m$ if $v_i=1$. Now we have that 
$$f'(v)=\prod_{i=1}^n f'_i(v)=\prod_{i=1}^n a_{i,v_i}.$$ By lemma \ref{homeqlem} there are elements $c,h_1,h_2,\dots,h_n\in F_m$ such that $f'(v)=c\prod_{i=1}^n h_i^{v_i}$. Let $r:F_n\rightarrow F_m$ denote the unique group homomorphism with the property that $r(g_i)=h_i$ holds for $i\in [n]$. We have for $v\in\{0,1\}^n$ that $$f(\prod g_i^{v_i})=f'(v)=c\prod_{i=1}^n h_i^{v_i}=cr(\prod g_i^{v_i})$$ and thus $f$ extends to the affine morphism $x\mapsto cr(x)$.

For the other direction assume that $f:F_n^\square\to F_m^\square$ extends to an affine homomorphism $r:F_n\to F_m$.  First we show that $\phi$ is a morphism in $\mathcal{N}$. To see this let $q$ denote the map from $F_n/[F_n,F_n]=\mathbb{Z}^n$ to $F_m/[F_m,F_m]=\mathbb{Z}^m$ defined by $q(x[F_n,F_n]):=r(x)[F_m,F_m]$. It is clear that $q$ is well defined and that it is an affine morphism. Furthermore we have that the restriction of $q$ to $\{0,1\}^n\subset\mathbb{Z}^n$ is equal to $\phi$. It follows from proposition \ref{abhomeq} that $\phi$ is a morphism in $\mathcal{N}$. It remains to show the monotonicity property for $\tau_\phi$. Assume by contradiction that for some $1\leq i<j\leq m$ we have that $a:=\tau_\phi(j)\neq 0$ and that $b:=\tau_\phi(i)>\tau_\phi(j)$. Let $q_2:F_m\to F_2$ be the unique  homomorphism with the property that $q_2(g_i)=g_1,q_2(g_j)=g_2$ and $q_2(g_k)=1$ if $k\in [m]\setminus\{i,j\}$. We have that $r_2:=q_2\circ r$ is an affine homomorphism from $F_n$ to $F_2$ with the property that $r_2(\prod_{i=1}^n g_i^{v_i})=g_1^{\phi(v)_i}g_2^{\phi(v)_j}$ holds for every $v\in\{0,1\}^n$. In particular, if we set all coordinates in $[n]\setminus\{a,b\}$ to $0$ we have that $r_2(g_a^{\epsilon_1}g_b^{\epsilon_2})=g_1^{t_1(\epsilon_2)}g_2^{t_2(\epsilon_1)}$ holds where $\epsilon_i\in\{0,1\}$ for $i=1,2$ and $t_1,t_2:\{0,1\}\to\{0,1\}$ are non constant functions. By lemma \ref{charaff}, the property that $r_2$ is an affine homomorphism implies that $r_2(1)r_2(g_a)^{-1}r_2(g_ag_b)r(g_b)^{-1}=1$ and thus $$g_1^{t_1(0)}g_2^{t_2(0)}(g_1^{t_1(0)}g_2^{t_2(1)})^{-1}g_1^{t_1(1)}g_2^{t_2(1)}(g_1^{t_1(1)}g_2^{t_2(0)})^{-1}=1.$$ This is equivalent with $$g_1^{t_1(0)}g_2^{t_2(0)-t_2(1)}g_1^{t_1(1)-t_1(0)}g_2^{t_2(1)-t_2(0)}g_1^{-t_1(1)}=1.$$ Since $t_1$ and $t_2$ are both non constant functions, this is impossible in the free group $F_2$.
\end{proof}

\section{nilspaces and groupspaces}\label{nil+group}

Let $\{0,1\}^n_*:=\{0,1\}^n\setminus\{1^n\}$.

\begin{definition} A cube-structure is a pair $\{X,\{C^k(X)\}_{n=0}^\infty\}$ where $X$ is a set and $C^n(X)$ is a subset of $X^{\{0,1\}^n}$.
\end{definition}

Note that we can also think think of the elements in $C^n(X)$ as functions from $\{0,1\}^n$ to $X$. By slightly abusing the notation we will identify cubic structures by their ground set $X$.

\begin{definition}\label{maindef} A cube-structure $X$ is called a {\bf nilspace} if the following axioms hold with $\mathcal{C}=\mathcal{N}$ and it is a {\bf groupspace} if the axioms hold with $\mathcal{C}=\mathcal{G}$. 
\begin{enumerate}
\item {\bf (presheaf)} If $\phi\in \hom_{\mathcal{C}}(\{0,1\}^n,\{0,1\}^m)$ and $\psi\in C^m(X)$ then $\psi\circ\phi\in C^n(X)$.
\item {\bf (ergodicity)} $C^1(X)=X^{\{0,1\}}$.
\item {\bf (completion)} Let $f:\{0,1\}^n_*\to X$ such that for every $i\in [n]$ we have that $f\circ e^n_{i,0}\in C^{n-1}(X)$. Then there is a map $\phi\in C^n(X)$ such that $\phi$ restricted to $\{0,1\}^n_*$ is equal to $f$.
\end{enumerate}
\end{definition}

Note that since the category $\mathcal{G}$ has strictly fewer morphisms than $\mathcal{N}$ we have that the first axiom is a weaker restriction on $(X,\{C^n(X)\}_{n=0}^\infty)$ and so groupspaces are more general than nilspaces. The goal of this paper is to show that the theory of groupspaces is general enough to include arbitrary groups  but it is not too general in the sense that many statements in nilspace theory have natural generalizations to groupspaces. 

\begin{definition} A groupspace $X$ is called a {\bf $k$-step groupspace} if in the third axiom the function $\phi$ is always uniquely determined by $f$ for $n=k+1$. 
\end{definition}

The next lemmas shows that larger dimensional cubes determine smaller dimensional cubes in a groupspace. 

\begin{lemma}\label{down-det} Let $X$ be a groupspace and let $n,k\in\mathbb{N}$ such that $n\leq k$. Then $c\in C^n(X)$ if and only if $c\circ\phi\in C^k(X)$ holds for the function $$\phi(x_1,x_2,\dots,x_k)=(x_1,x_2,\dots,x_n).$$
\end{lemma}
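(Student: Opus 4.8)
The plan is to observe that both implications are immediate consequences of the presheaf axiom, since the map $\phi$ in the statement is exactly the projection $p_{[n]}\colon\{0,1\}^k\to\{0,1\}^n$ onto the first $n$ coordinates, which was already recorded to lie in $\hom_{\mathcal{G}}(\{0,1\}^k,\{0,1\}^n)$. In particular, neither the ergodicity nor the completion axiom, nor the $k$-step hypothesis, will be needed; the argument uses only the composition rule for cubes.

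For the forward direction I would simply apply the presheaf axiom to $\phi\in\hom_{\mathcal{G}}(\{0,1\}^k,\{0,1\}^n)$ and to $c\in C^n(X)$, concluding directly that $c\circ\phi\in C^k(X)$.

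For the reverse direction the idea is to exhibit a one-sided inverse (a section) of $\phi$ that again lies in the category $\mathcal{G}$. Concretely, I would take $\psi\colon\{0,1\}^n\to\{0,1\}^k$ to be the zero-padding map $\psi(y_1,\dots,y_n):=(y_1,\dots,y_n,0,\dots,0)$, which is precisely the simplicial embedding $s_v$ attached to $v=(1,\dots,1,0,\dots,0)$ with $n$ ones, and hence belongs to $\hom_{\mathcal{G}}(\{0,1\}^n,\{0,1\}^k)$. By construction $\phi\circ\psi$ is the identity on $\{0,1\}^n$, so applying the presheaf axiom to $c\circ\phi\in C^k(X)$ and to $\psi$ gives $(c\circ\phi)\circ\psi=c\circ(\phi\circ\psi)=c\in C^n(X)$, as required.

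The only point deserving a moment's care is the verification that $\psi$ is genuinely a morphism in $\mathcal{G}$: its coordinate functions are $p^k_i\circ\psi=p^n_i$ for $i\le n$ and the constant $0$ for $i>n$, so $\gamma_\psi(i)=i$ on $\{1,\dots,n\}$ and $\gamma_\psi(i)=0$ otherwise, which is increasing on its support and vanishes on the remaining (larger) indices; thus the monotonicity condition of Definition \ref{cubmorph} holds. This monotonicity is exactly what makes the padding legal — had the zero coordinates been inserted among the low indices, the section would fail to lie in $\mathcal{G}$ — but with the zeros appended at the end (equivalently, invoking that every simplicial embedding lies in $\mathcal{G}$) there is no genuine obstacle, and the proof reduces to two invocations of the presheaf axiom.
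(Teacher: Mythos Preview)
Your proof is correct and follows exactly the same approach as the paper: both directions are handled by the presheaf axiom using the projection $\phi=p_{[n]}$ and the zero-padding section $\psi$. Your write-up is slightly more detailed in verifying that $\psi\in\hom_{\mathcal{G}}(\{0,1\}^n,\{0,1\}^k)$, but the argument is identical.
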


\begin{proof} Since $\phi\in \hom_{\mathcal{G}}(\{0,1\}^k,\{0,1\}^n)$, the presheaf axiom shows that if $c\in C^n(X)$ then so is $c\circ\phi$. For the other direction let $$\psi(x_1,x_2,\dots,x_n):=(x_1,x_2,\dots,x_n,0,0,\dots,0)$$ with $k-n$ zeros. We have that $\phi\in \hom_{\mathcal{G}}(\{0,1\}^n,\{0,1\}^k)$. Since $c=(c\circ\phi)\circ\psi$ we have that if $c\circ\phi\in C^k(X)$ then $c\in C^n(X)$. 
\end{proof}

In the next two lemmas we prove that in a $k$-step groupspace, the set $C^{k+1}(X)$ determins the whole groupspace structure. 

\begin{lemma}\label{up-det} Assume that $X$ is a $k$-step groupspace and $n\geq k+1$. Let $c:\{0,1\}^n\to X$ be a function. Then $c\in C^n(X)$ if and only if $c\circ e^n_{i,0}\in C^{n-1}(X)$ holds for every $i\in[n]$ and furthermore $c\circ\phi\in C^{k+1}(X)$ holds for the map defined by $$\phi(x_1,x_2,\dots,x_{k+1})=(x_1,x_2,\dots,x_{k+1},1,1,\dots,1)$$ with $n-(k+1)$ ones. 
\end{lemma}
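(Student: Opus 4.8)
The plan is to dispatch the ``only if'' direction immediately from the presheaf axiom, and to prove the ``if'' direction by first producing a completion and then pinning down its top vertex using the uniqueness of completions at dimension $k+1$.

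For the ``only if'' direction, suppose $c\in C^n(X)$. The face embeddings $e^n_{i,0}$ are morphisms in $\mathcal{G}$, so $c\circ e^n_{i,0}\in C^{n-1}(X)$ by the presheaf axiom. It remains to check that $\phi$ is a morphism in $\mathcal{G}$: for $i\leq k+1$ one has $p^n_i\circ\phi=p^{k+1}_i$, whence $\gamma_\phi(i)=i$, while for $i>k+1$ the composite $p^n_i\circ\phi$ is the constant function $1$, whence $\gamma_\phi(i)=0$; the monotonicity of $\gamma_\phi$ on its support is then immediate. Hence $c\circ\phi\in C^{k+1}(X)$, again by the presheaf axiom.

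For the ``if'' direction, assume $c\circ e^n_{i,0}\in C^{n-1}(X)$ for all $i\in[n]$ and $c\circ\phi\in C^{k+1}(X)$, and set $f:=c|_{\{0,1\}^n_*}$. Since the image of $e^n_{i,0}$ lies in $\{v:v_i=0\}\subseteq\{0,1\}^n_*$, we have $f\circ e^n_{i,0}=c\circ e^n_{i,0}\in C^{n-1}(X)$, so the completion axiom yields some $\phi'\in C^n(X)$ with $\phi'|_{\{0,1\}^n_*}=f$. It then suffices to show $\phi'(1^n)=c(1^n)$, since this forces $\phi'=c$ and hence $c\in C^n(X)$. The key observation is that $\phi$ sends $1^{k+1}$ to $1^n$ but sends $\{0,1\}^{k+1}_*$ into $\{0,1\}^n_*$: if $v\neq 1^{k+1}$ then some of the first $k+1$ coordinates of $v$ is $0$, so $\phi(v)\neq 1^n$. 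Consequently $\phi'\circ\phi$ and $c\circ\phi$ agree on $\{0,1\}^{k+1}_*$, because for such $v$ we have $\phi(v)\in\{0,1\}^n_*$ and hence $\phi'(\phi(v))=c(\phi(v))$. Now $\phi'\circ\phi\in C^{k+1}(X)$ by the presheaf axiom (as $\phi$ is a morphism in $\mathcal{G}$), while $c\circ\phi\in C^{k+1}(X)$ by hypothesis; so these are two cubes in $C^{k+1}(X)$ agreeing on $\{0,1\}^{k+1}_*$. Since $X$ is a $k$-step groupspace, completion at dimension $k+1$ is unique, giving $\phi'\circ\phi=c\circ\phi$; evaluating at $1^{k+1}$ yields $\phi'(1^n)=\phi'(\phi(1^{k+1}))=c(\phi(1^{k+1}))=c(1^n)$, as required.

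The crux of the argument---and the one point that requires genuine thought rather than routine checking---is the choice of $\phi$: it is engineered so that the only preimage of the top vertex $1^n$ is the top vertex $1^{k+1}$, which converts the $n$-dimensional equality $\phi'(1^n)=c(1^n)$ into the uniqueness of a single pair of $(k+1)$-dimensional cubes that agree off their own top vertex. Everything else (that $\phi$ and $e^n_{i,0}$ lie in $\mathcal{G}$, and that the completion hypotheses transfer between $c$ and $f$) is bookkeeping.
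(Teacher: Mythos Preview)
Your proof is correct and follows essentially the same approach as the paper: complete the restriction $c|_{\{0,1\}^n_*}$ to some cube (your $\phi'$, the paper's $c_2$), then use the $k$-step uniqueness applied to $\phi'\circ\phi$ and $c\circ\phi$ to force agreement at the top vertex. Your write-up is more explicit in verifying that $\phi\in\hom_{\mathcal{G}}(\{0,1\}^{k+1},\{0,1\}^n)$ and that $\phi$ maps $\{0,1\}^{k+1}_*$ into $\{0,1\}^n_*$, but the argument is the same.
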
 

\begin{proof} Let $c'$ be the restriction of $c$ to $\{0,1\}^n_*$.  If $c\in C^n(X)$ then the conditions of the lemma are satisfied by the presheaf axiom. On the other hand if the conditions of the lemma are satisfied then by the completion axiom there is $c_2\in C^n(X)$ such that its restriction to $\{0,1\}^n_*$ is equal to $c'$. The $k$-step property of $X$ implies that $c_2\circ\phi=c\circ\phi$ since both are in $C^{k+1}(X)$ and they agree on $\{0,1\}^{k+1}_*$. This means that $c_2=c$ and thus $c\in C^n(X)$.
\end{proof}

\begin{lemma}\label{k-step-det} Assume that $X$ has two parallel $k$-step groupspace structures $\{C^n(X)\}_{n=0}^\infty$ and $\{C'^n(X)\}_{n=0}^\infty$. Assume furthermore that $C^{k+1}(X)=C'^{k+1}(X)$. Then $C^n(X)=C'^n(X)$ holds for every $n$.
\end{lemma}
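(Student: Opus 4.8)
The plan is to leverage the two preceding lemmas so as to reduce equality of the two cube structures in every dimension to the single hypothesis $C^{k+1}(X)=C'^{k+1}(X)$. The key structural facts are that Lemma \ref{down-det} expresses every cube of dimension at most $k+1$ purely in terms of $(k+1)$-dimensional cubes, while Lemma \ref{up-det} expresses every cube of dimension $n\geq k+2$ in terms of cubes of dimension $n-1$ together with $(k+1)$-dimensional cubes. Thus I would split the argument into the low-dimensional range $n\leq k+1$ and the high-dimensional range $n\geq k+1$, handling the first directly and the second by induction.

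For the low-dimensional range I would apply Lemma \ref{down-det} with its generic upper bound taken to be $k+1$ (note that in that lemma the symbol playing the role of the top dimension is an arbitrary natural number, independent of the step of the groupspace). For any $n\leq k+1$ and any $c:\{0,1\}^n\to X$, the lemma gives that $c\in C^n(X)$ if and only if $c\circ\phi\in C^{k+1}(X)$, where $\phi$ is the coordinate projection $\phi(x_1,\dots,x_{k+1})=(x_1,\dots,x_n)$. Since $\phi$ is independent of the cube structure and $C^{k+1}(X)=C'^{k+1}(X)$ by hypothesis, the same equivalence holds verbatim for the primed structure. Hence $c\in C^n(X)\iff c\circ\phi\in C^{k+1}(X)=C'^{k+1}(X)\iff c\in C'^n(X)$, giving $C^n(X)=C'^n(X)$ for all $n\leq k+1$. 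In particular this settles the base case $n=k+1$.

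For the high-dimensional range I would argue by induction on $n$, the base case $n=k+1$ being already established. Assume $C^{n-1}(X)=C'^{n-1}(X)$ for some $n\geq k+2$, and fix $c:\{0,1\}^n\to X$. By Lemma \ref{up-det} applied to the unprimed structure, $c\in C^n(X)$ holds if and only if $c\circ e^n_{i,0}\in C^{n-1}(X)$ for every $i\in[n]$ and $c\circ\phi\in C^{k+1}(X)$ for the specified face-filling morphism. By the induction hypothesis $C^{n-1}(X)=C'^{n-1}(X)$ and by hypothesis $C^{k+1}(X)=C'^{k+1}(X)$, so every clause of this criterion is literally the same for the primed structure. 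Applying Lemma \ref{up-det} again, now to the primed structure, the same two conditions are equivalent to $c\in C'^n(X)$. Therefore $C^n(X)=C'^n(X)$, completing the induction, and combining the two ranges yields the claim for every $n$.

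The only point requiring care --- and the place where I would be most attentive rather than where any genuine difficulty lies --- is that both reductions characterise membership in $C^n(X)$ using only cubes whose dimension is either strictly smaller than $n$ or equal to the fixed value $k+1$. This is precisely what allows the single agreement $C^{k+1}(X)=C'^{k+1}(X)$, propagated downward by projection and upward by induction, to force agreement in every dimension. There is no real obstacle beyond bookkeeping, since the $k$-step hypothesis is exactly what makes the characterisations in Lemmas \ref{down-det} and \ref{up-det} available.
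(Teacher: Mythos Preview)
Your proof is correct and follows essentially the same approach as the paper: use Lemma~\ref{down-det} (with top dimension $k+1$) to handle $n\leq k+1$, and then induct via Lemma~\ref{up-det} for $n>k+1$. The paper's version is merely terser; your added care about the role of the generic parameter in Lemma~\ref{down-det} is accurate and does not depart from the intended argument.
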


\begin{proof} Lemma \ref{down-det} shows that if $n<k+1$ then the statement follows. Assume that $n>k+1$ and also assume that by induction we have the statement for $n-1$. Then lemma \ref{up-det} shows that $C^n(X)=C'^n(X)$.
\end{proof}

\begin{definition} Let $X,Y$ be two groupspaces. We say that a map $m:X\to Y$ is a {\bf groupspace morphism} if for every $n\in\mathbb{N}$ and $f\in C^n(X)$ we have that $m\circ f\in C^n(Y)$.  
\end{definition}

Note that the above definition of morphism does not differentiate between $\mathcal{N}$ and $\mathcal{G}$ and so groupspace morphisms between nilspaces are nilspace morphisms. This also justifies that nilspaces are just special elements in the category of groupspaces. 

\begin{definition} A {\bf compact groupspace} is a groupspace $(X,\{C^n(X)\}_{n=0}^\infty)$ such that $X$ is a compact  topological space and $C^n(X)$ is a closed subset of $X^{\{0,1\}^n}$ for every $n\in\mathbb{N}$.
\end{definition}

\section{Classification of $1$-step groupspaces}\label{Class-1}

In this chapter we show that non commutative groups are equipped with a natural $1$-step cubespace structure. We also show the converse, namely that every $1$-step cubespace comes from a group with this construction. 

\begin{definition} Let $G$ be an arbitrary group. We denote by $\mathcal{D}_1(G)$ the cube-structure whose ground set is $G$ and for $k\in\mathbb{N}$ we have that $C^k(G)$ is the set of all maps of the form (\ref{simplecubes}) in chapter \ref{cubesin}.
\end{definition}

From this definition we have that $C^k(G)$ is the set of all maps of the form $f\circ\tau_k$ where $f:F_k\to G$ runs through all affine homomorphisms. We will need the next lemmas.

\begin{lemma}\label{lowcube}  Let $G$ be a group together with the cube-structure $\mathcal{D}_1(G)$. Then $C^0(G)=G,C^1(G)=G\times G$ and 
$$C^2(G)=\{f:\{0,1\}^2\to G : f(0,0)f(1,0)^{-1}f(1,1)f(0,1)^{-1}=1\}$$
\end{lemma}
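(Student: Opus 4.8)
The plan is to unwind the definition of $\mathcal{D}_1(G)$ dimension by dimension. Recall that, by definition, $C^k(G)$ consists exactly of the maps $c(v)=a_0\prod_{i=1}^k a_i^{v_i}$ with $a_0,a_1,\dots,a_k\in G$ as in (\ref{simplecubes}). For $k=0$ the domain $\{0,1\}^0$ is a single point and the defining formula collapses to the empty product $c=a_0$, so $C^0(G)$ is the set of constant maps, which we identify with $G$. For $k=1$ a simple cube takes the values $c(0)=a_0$ and $c(1)=a_0a_1$; as $(a_0,a_1)$ ranges over $G^2$ the pair $(c(0),c(1))$ ranges over all of $G\times G$ (given any target $(x,y)$, take $a_0=x$, $a_1=x^{-1}y$), so $C^1(G)=G^{\{0,1\}}$, recovering the ergodicity identity.

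The substance is the $k=2$ case, which I would handle by first recording the four values of a simple cube: $c(0,0)=a_0$, $c(1,0)=a_0a_1$, $c(0,1)=a_0a_2$, $c(1,1)=a_0a_1a_2$. For the inclusion ``$\subseteq$'' a one-line substitution shows $c(0,0)c(1,0)^{-1}c(1,1)c(0,1)^{-1}=1$; more conceptually, by Lemma \ref{simpaffdef} a simple cube is $c=f\circ\tau_2$ for an affine morphism $f\colon F_2\to G$, and the free-group identity $1\cdot g_1^{-1}\cdot(g_1g_2)\cdot g_2^{-1}=1$ is transported under $f$, via Lemma \ref{charaff}, to exactly the claimed relation.

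For the reverse inclusion I would take any $f\colon\{0,1\}^2\to G$ satisfying the relation and exhibit constants realizing it as a simple cube. Setting $a_0=f(0,0)$, $a_1=f(0,0)^{-1}f(1,0)$, and $a_2=f(0,0)^{-1}f(0,1)$ reproduces the values at $(0,0)$, $(1,0)$, $(0,1)$ immediately, and the only place the hypothesis enters is the corner $(1,1)$: the relation rearranges to $f(1,1)=f(1,0)f(0,0)^{-1}f(0,1)=a_0a_1a_2$, which is precisely $c(1,1)$. Equivalently one can define $f$ on $F_2^{*}=\{1,g_1,g_2\}$ by these three values, extend uniquely to an affine morphism using Lemma \ref{affchar}, and then check that affineness forces the value at $g_1g_2$ to match.

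I do not expect a genuine obstacle here, since every step is an elementary manipulation in $G$. The only point requiring care is the non-commutativity: the relation must be recorded in the exact cyclic order $f(0,0)f(1,0)^{-1}f(1,1)f(0,1)^{-1}$ rather than as an abelian-style difference, and the single non-automatic verification is that this particular ordering is exactly what encodes consistency at the $(1,1)$-corner.
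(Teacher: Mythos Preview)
Your argument is correct and matches the paper's own proof essentially line for line: the paper also dismisses $k=0,1$ as trivial from (\ref{simplecubes}) and handles $k=2$ by writing out the four values $g_0,\,g_0g_1,\,g_0g_2,\,g_0g_1g_2$, computing the alternating product for ``$\subseteq$'', and for ``$\supseteq$'' setting $g_0=f(0,0)$, $g_1=f(0,0)^{-1}f(1,0)$, $g_2=f(0,0)^{-1}f(0,1)$ and checking the $(1,1)$-corner from the relation. Your additional remarks invoking Lemmas~\ref{simpaffdef}, \ref{charaff}, \ref{affchar} are a nice conceptual gloss but not needed for the proof.
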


\begin{proof} The first two claims are trivial from (\ref{simplecubes}). To see the last claim assume first that $f\in C^2(G)$ and thus 
\begin{equation}\label{1classeq1}
f(0,0)=g_0,f(1,0)=g_0g_1,f(0,1)=g_0g_2,f(1,1)=g_0g_1g_2.
\end{equation}
Then we have that $$f(0,0)f(1,0)^{-1}f(1,1)f(0,1)^{-1}=g_0(g_1^{-1}g_0^{-1})(g_0g_1g_2)(g_2^{-1}g_0^{-1})=1.$$ On the other hand if $f$ satisfies the condition of the lemma then $$f(1,1)=f(1,0)f(0,0)^{-1}f(0,1).$$ Let 
$$g_0:=f(0,0),g_1:=f(0,0)^{-1}f(1,0),g_2:=f(0,0)^{-1}f(0,1).$$ With this notation we have that the first three of the equations in (\ref{1classeq1}) hold. To see the last one observe that
$$g_0g_1g_2=f(0,0)f(0,0)^{-1}f(1,0)f(0,0)^{-1}f(0,1)=f(1,0)f(0,0)^{-1}f(0,1)=f(1,1).$$
\end{proof}

For the next lemma let $\{0,1\}^k_1$ denote the subset in $\{0,1\}^k$ with at most one coordinate with a $1$. 

\begin{lemma}\label{eqon1} Let $G$ be a group together with the cube-structure $\mathcal{D}_1(G)$.  Let $k\in\mathbb{N}$ and assume that $c_1,c_2\in C^k(G)$ satisfy $c_1(v)=c_2(v)$ for every $v\in\{0,1\}^k_1$. Then $c_1=c_2$.
\end{lemma}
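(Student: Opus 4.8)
The plan is to reduce the claim to the uniqueness of affine morphisms established in Lemma \ref{affchar}. By the definition of $\mathcal{D}_1(G)$ together with Lemma \ref{simpaffdef}, each cube $c_\ell\in C^k(G)$ (for $\ell=1,2$) can be written as $c_\ell=f_\ell\circ\tau_k$ for some affine morphism $f_\ell\in\aff{F_k,G}$, where $\tau_k$ is the fundamental cube from (\ref{tauk}). The key observation is that $\tau_k$ maps the set $\{0,1\}^k_1$ bijectively onto $F_k^*=\{1,g_1,\dots,g_k\}$: indeed $\tau_k(0^k)=1$ is the empty product, while the basis vector whose unique $1$ sits in coordinate $i$ is sent to $g_i$.

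Granting this identification, the hypothesis that $c_1$ and $c_2$ agree on $\{0,1\}^k_1$ is precisely the statement that $f_1$ and $f_2$ agree on $F_k^*$. Part (2) of Lemma \ref{affchar} then forces $f_1=f_2$, whence $c_1=f_1\circ\tau_k=f_2\circ\tau_k=c_2$, as required.

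There is no serious obstacle here; the only point that needs checking is the identification $\tau_k(\{0,1\}^k_1)=F_k^*$, which is immediate from formula (\ref{tauk}). If one prefers to avoid the free-group language altogether, the same argument can be run by hand: writing $c_\ell(v)=a_0^{(\ell)}\prod_{i=1}^k(a_i^{(\ell)})^{v_i}$ as in (\ref{simplecubes}), the value at $0^k$ recovers $a_0^{(\ell)}=c_\ell(0^k)$, and the value at the $i$-th basis vector recovers $a_i^{(\ell)}=c_\ell(0^k)^{-1}c_\ell(e_i)$. Thus agreement on $\{0,1\}^k_1$ forces all the defining parameters to coincide, and hence $c_1=c_2$ everywhere on $\{0,1\}^k$.
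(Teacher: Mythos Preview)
Your proposal is correct and essentially matches the paper's own proof: the paper simply observes from formula (\ref{simplecubes}) that the parameters $a_0,a_1,\dots,a_k$ of a cube are recovered from its values on $\{0,1\}^k_1$, which is exactly your ``by hand'' paragraph. Your first version via Lemma \ref{simpaffdef} and Lemma \ref{affchar} is just the same observation phrased in the affine-morphism language, so there is no genuinely different idea here.
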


\begin{proof} Formula (\ref{simplecubes}) shows that if $c\in C^k(G)$ then the group elements $g_0,g_1,\dots,g_k$ describing $c$ are determined by the values of $c$ on $\{0,1\}^k_1$ and thus $c$ is uniquely determined by its restriction to $\{0,1\}^k_1$. This completes the proof.
\end{proof}

\begin{theorem} 
Let $G$ be an arbitrary group. Then the cube-structure $\mathcal{D}_1(G)$ is a $1$-step cubespace. 
\end{theorem}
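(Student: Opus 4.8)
The goal is to verify the three groupspace axioms for $\mathcal{D}_1(G)$ and then to establish that the completion in dimension $2$ is unique. The plan is to check the axioms in the order presheaf, ergodicity, completion, and finally the $1$-step uniqueness, drawing on the characterization of $C^k(G)$ as the images $f\circ\tau_k$ of the fundamental cube under affine homomorphisms $f\in\aff{F_k,G}$.

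First I would verify the \emph{presheaf axiom}. Suppose $\phi\in\hom_{\mathcal{G}}(\{0,1\}^n,\{0,1\}^m)$ and $\psi\in C^m(G)$. By definition $\psi=f\circ\tau_m$ for some affine homomorphism $f:F_m\to G$. By Proposition \ref{homeq}, the combinatorial morphism $\phi$ corresponds to an affine morphism $\hat\phi:F_n\to F_m$ satisfying $\hat\phi\circ\tau_n=\tau_m\circ\phi$ on $F_n^\square$ (reading off the identity (\ref{homeqeq})). Then $\psi\circ\phi=f\circ\tau_m\circ\phi=f\circ\hat\phi\circ\tau_n=(f\circ\hat\phi)\circ\tau_n$, and since the composition of affine morphisms is again an affine morphism, $f\circ\hat\phi\in\aff{F_n,G}$, so $\psi\circ\phi\in C^n(G)$. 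The \emph{ergodicity axiom} is immediate from Lemma \ref{lowcube}, which gives $C^1(G)=G\times G=G^{\{0,1\}}$.

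Next I would handle the \emph{completion axiom} for general $n$. Given $f:\{0,1\}^n_*\to G$ with $f\circ e^n_{i,0}\in C^{n-1}(G)$ for every $i$, I want to produce a cube in $C^n(G)$ extending $f$. The natural approach is to read off the candidate generators from the values of $f$ on the low-weight vertices $\{0,1\}^n_1$: set $g_0:=f(0^n)$ and $g_i:=f(0^n)^{-1}f(\mathbf{e}_i)$ where $\mathbf{e}_i$ is the $i$-th standard basis vector, then let $\phi(v):=g_0\prod_{i=1}^n g_i^{v_i}$ be the resulting simple cube in $C^n(G)$. Here I would use Lemma \ref{eqon1}: the simple cube $\phi$ is uniquely determined by its restriction to $\{0,1\}^n_1$, and by construction $\phi$ agrees with $f$ on all of $\{0,1\}^n_1$. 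The remaining task is to show $\phi$ agrees with $f$ on \emph{all} of $\{0,1\}^n_*$, not merely on $\{0,1\}^n_1$. This is where the hypothesis $f\circ e^n_{i,0}\in C^{n-1}(G)$ is needed: each restriction to a lower face through $0$ is itself a genuine $(n-1)$-cube, and I expect to argue by induction on $n$ (or on the weight $h(v)$) that these face conditions force $f(v)=g_0\prod_{i\in\supp{v}}g_i$ for every $v\neq 1^n$. The inductive step would observe that any $v\in\{0,1\}^n_*$ lies in some face $\{v:v_i=0\}$ (pick $i\notin\supp{v}$, which exists since $v\neq 1^n$), and on that face the value is controlled by the $(n-1)$-cube $f\circ e^n_{i,0}$, whose own generators match $g_0,\dots,\widehat{g_i},\dots,g_n$ by the $\{0,1\}^{n-1}_1$ agreement.

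Finally I would prove the \emph{$1$-step property}, namely uniqueness of completion for $n=2$. Given $f:\{0,1\}^2_*\to G$ with the required edge conditions, Lemma \ref{lowcube} shows that any $\phi\in C^2(G)$ extending $f$ must satisfy $\phi(1,1)=\phi(1,0)\phi(0,0)^{-1}\phi(0,1)=f(1,0)f(0,0)^{-1}f(0,1)$, which is completely determined by the values on $\{0,1\}^2_*$; hence the completion is unique. The main obstacle I anticipate is the completion argument for general $n$: one must be careful that the locally prescribed face data are mutually consistent, so that the single global simple cube built from the low-weight data genuinely reproduces $f$ everywhere on $\{0,1\}^n_*$. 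The cleanest route is likely the induction on $n$ described above, using Lemma \ref{down-det} or the face-restriction structure to reduce each high-weight vertex to a lower-dimensional cube where the agreement is already known, with Lemma \ref{eqon1} providing the uniqueness that pins down the values.
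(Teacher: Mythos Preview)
Your proposal is correct and matches the paper's proof almost verbatim: presheaf via Proposition~\ref{homeq} and composition of affine morphisms, ergodicity via Lemma~\ref{lowcube}, completion by building the simple cube from the values on $\{0,1\}^n_1$ and then verifying agreement on each face $e^n_{i,0}$ using Lemma~\ref{eqon1}, and the $1$-step property from the explicit formula in Lemma~\ref{lowcube}.

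Two small remarks. First, no induction on $n$ is needed in the completion step: once you have built $\phi\in C^n(G)$ agreeing with $f$ on $\{0,1\}^n_1$, for each $i$ both $\phi\circ e^n_{i,0}$ (by the already-proved presheaf axiom) and $f\circ e^n_{i,0}$ (by hypothesis) lie in $C^{n-1}(G)$ and agree on $\{0,1\}^{n-1}_1$, so Lemma~\ref{eqon1} gives equality directly; since the faces $e^n_{i,0}$ cover $\{0,1\}^n_*$, you are done. Second, avoid invoking Lemma~\ref{down-det} here: that lemma is stated for groupspaces, and you have not yet established that $\mathcal{D}_1(G)$ is one, so using it would be circular. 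Your alternative route through the face-restriction structure and Lemma~\ref{eqon1} is exactly what the paper does.
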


\begin{proof}
We start by checking the cubespace axioms. We start by the presheaf axiom. Assume that $\phi\in\hom_{\mathcal{G}}(\{0,1\}^n,\{0,1\}^m)$ and $\psi\in C^m(G)$. By lemma \ref{simpaffdef} we can write $\psi=q\circ\tau_m$ where $q$ is an affine homomorphism from $F_m$ to $G$. Furthermore by proposition \ref{homeq} we have that there exists an affine morphism $\alpha:F_n\to F_m$ such that $\tau_m^{-1}\circ\alpha\circ\tau_n=\phi$.  It follows that $\psi\circ\phi=q\circ\alpha\circ\tau_n$ where $q\circ\alpha$ is an affine morphism from $F_n$ to $G$. This $\psi\circ\phi\in C^n(G)$ which verifies the presheaf axiom. 

The ergodicity axiom follows from lemma \ref{lowcube}.

Now we turn to the completion axiom. Lemma \ref{lowcube} implies the completion axiom for $n=0,1$. Assume that $n>1$. Let $f:\{0,1\}^n_*\to X$ with the property in the axiom. Let $q:F_n\to G$ denote the unique group homomorphism such that $q(g_i)=f(0)^{-1}f(b_i)$ where $b_i\in\{0,1\}^n$ has $1$ at the $i$-th coordinate and $0$ everywhere else. We have that $q':=f(0)q$ is an affine morphism from $F_n$ to $G$ such that $q'\circ\tau_n$ agrees with $f$ on every vector in $\{0,1\}^n_1$. Notice that $\phi:=q'\circ\tau_n$ is in $C^n(G)$ holds by definition. We claim that $\phi$ satisfies the requirement in the closing axiom. We need to show that $\phi=f$ holds on $\{0,1\}^n_*$. Let us use the short hand notation $e_i$ for the map $e^n_{i,0}$. Notice that $\{0,1\}^n_*$ is the union of the image sets of the maps $e_i:\{0,1\}^{n-1}\to\{0,1\}^n$ and thus it suffices to show that $\phi\circ e_i=f\circ e_i$ holds for $1\leq i\leq n$. Observe that $\phi\circ e_i$ agrees with $f\circ e_i$ on $\{0,1\}^{n-1}_1$.  Lemma \ref{eqon1} implies that $\phi\circ e_i=f\circ e_i$ and thus the proof of the completion axiom is finished. Lemma \ref{lowcube} implies that if $f\in C^2(G)$ then $f(1,1)=f(1,0)f(0,0)^{-1}f(0,1)$. Thus completion must be unique for $n=2$ and so $G$ is $1$-step. 
\end{proof}

\begin{theorem}\label{class1step} Every $1$-step groupspace is isomorphic with $\mathcal{D}_1(G)$ for some group $G$. 
\end{theorem}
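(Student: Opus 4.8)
The plan is to manufacture a group $G$ out of the unique completions of $2$-dimensional corners and then to check that the resulting cube-structure $\mathcal{D}_1(G)$ agrees with $X$. Fix a base point $e\in X$. By the ergodicity axiom every $f\colon\{0,1\}^2_*\to X$ automatically satisfies the hypothesis of the completion axiom, and since $X$ is $1$-step the completion is unique; I would write $t(\alpha,\beta,\gamma)\in X$ for the value at $(1,1)$ of the unique cube in $C^2(X)$ restricting to $(0,0)\mapsto\alpha,(1,0)\mapsto\beta,(0,1)\mapsto\gamma$. Uniqueness gives the working characterisation $c\in C^2(X)\iff c(1,1)=t(c(0,0),c(1,0),c(0,1))$. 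I then define a multiplication $a\cdot b:=t(e,a,b)$ and an inversion $a^{-1}:=t(a,e,e)$, and the goal becomes to show that $(X,\cdot,e)$ is a group whose parallelogram relation is exactly $t$.

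The identity and inverse laws are cheap. The two coordinate projections $p^2_1,p^2_2\colon\{0,1\}^2\to\{0,1\}$ lie in $\mathcal{G}$, so composing a $1$-cube $d$ (any element of $C^1(X)=X^{\{0,1\}}$) with them produces squares in $C^2(X)$ that are constant along one direction; applied to $d(0)=e,d(1)=a$ these force $e\cdot a=a$ and $a\cdot e=a$. For inverses I would use the reflections $r_1,r_2\in\hom_{\mathcal{G}}(\{0,1\}^2,\{0,1\}^2)$ of Lemma \ref{transim}: starting from the cube $d\in C^2(X)$ with $d(0,0)=a,d(1,0)=e,d(0,1)=e,d(1,1)=a^{-1}$, the presheaf axiom gives $d\circ r_1,d\circ r_2\in C^2(X)$, and reading off their corners $(e,a,a^{-1})$ and $(e,a^{-1},a)$ together with uniqueness shows $a\cdot a^{-1}=a^{-1}\cdot a=e$.

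The main obstacle is associativity, and this is precisely where the category $\mathcal{G}$ earns its keep. I would expose both $(ab)c$ and $a(bc)$ inside a single $3$-cube. Define $f$ on $\{0,1\}^3_*$ by $f(000)=e$, $f(100)=a$, $f(010)=b$, $f(001)=c$, $f(110)=ab$, $f(101)=ac$, $f(011)=bc$. Each of the three faces $f\circ e^3_{i,0}$ is then a parallelogram by the very definition of the products, so the completion axiom yields $\Phi\in C^3(X)$ extending $f$; set $w:=\Phi(111)$. Now I invoke two \emph{non-face} morphisms: the maps $\phi(s,t)=(s,s,t)$ and $\psi(s,t)=(s,t,t)$ are monotone, hence lie in $\hom_{\mathcal{G}}(\{0,1\}^2,\{0,1\}^3)$. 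By the presheaf axiom $\Phi\circ\phi$ and $\Phi\circ\psi$ belong to $C^2(X)$; their corners are $(e,ab,c)$ and $(e,a,bc)$ with common top vertex $w$, so the characterisation of $C^2(X)$ gives $w=(ab)\cdot c$ and $w=a\cdot(bc)$, whence associativity. It is essential that $\phi,\psi$ are monotone: this is exactly what keeps them in $\mathcal{G}$, and it is the combinatorial reason that groups (and not a wider class) arise at the $1$-step level.

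Finally I would identify the two structures. Having shown $G=(X,\cdot,e)$ is a group, it remains to prove $C^2(X)=C^2(\mathcal{D}_1(G))$. By Lemma \ref{lowcube} the right-hand side is $\{c:c(1,1)=c(1,0)c(0,0)^{-1}c(0,1)\}$, so in view of the characterisation above the task reduces to the single identity $t(\alpha,\beta,\gamma)=\beta\alpha^{-1}\gamma$ for all $\alpha,\beta,\gamma\in X$. I expect this to be the second genuinely non-trivial point, and I would prove it by the same device: complete a $3$-cube whose $e^3_{i,0}$-faces realise the parallelograms for $t(\alpha,\beta,e)$, $t(\alpha,e,\gamma)$ and $t(\alpha,\beta,\gamma)$, and then evaluate its $x_2=1$ face $\Phi\circ e^3_{2,1}$, which is based at $e$ and hence reads the top vertex as an honest product; combining this with the identity and inverse laws pins down $t(\alpha,\beta,\gamma)$ as $\beta\alpha^{-1}\gamma$. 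Once $C^2(X)=C^2(\mathcal{D}_1(G))$ is in hand, Lemma \ref{k-step-det}, applied to the two $1$-step structures on the common set $X$ that agree in dimension $k+1=2$, yields $C^n(X)=C^n(\mathcal{D}_1(G))$ for every $n$, so the identity map on $X$ is the required isomorphism.
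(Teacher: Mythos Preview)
Your strategy is the paper's: define $a\cdot b$ and $a^{-1}$ via unique $2$-cube completion, prove associativity by completing a $3$-cube and reading the two diagonals $(s,s,t)$ and $(s,t,t)$, then identify $t(\alpha,\beta,\gamma)$ with $\beta\alpha^{-1}\gamma$ and finish with Lemma~\ref{k-step-det}. The identity and inverse arguments match the paper's (you do both sides, the paper does one side and quotes that right identity plus right inverse suffices).

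There is one genuine slip in your final step. With the $3$-cube you describe ($f(000)=\alpha$, $f(100)=\beta$, $f(010)=e$, $f(001)=\gamma$, lower faces completed), the quantity $t(\alpha,\beta,\gamma)$ is $\Phi(1,0,1)$, \emph{not} $\Phi(1,1,1)$. The $x_2=1$ face therefore computes $\Phi(1,1,1)=t(\alpha,\beta,e)\cdot t(\alpha,e,\gamma)$, which is not yet $t(\alpha,\beta,\gamma)$; an extra link is missing. You can salvage this (first show $t(\alpha,e,\gamma)=\alpha^{-1}\gamma$ by reflecting the product square with $r_1$ and using associativity, then read the diagonal $(s,t,t)$ on your cube and substitute), but the paper's choice of $3$-cube avoids the detour: anchor at $f(000)=e$ with $f(100)=x$, $f(010)=x^{-1}y$, $f(001)=x^{-1}z$. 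The three lower faces are then honest product squares (using $x\cdot(x^{-1}y)=y$ from associativity), the diagonal $(s,s,t)$ gives $\Phi(1,1,1)=y\cdot(x^{-1}z)=yx^{-1}z$, and the face map $(s,t)\mapsto(1,s,t)$ gives $\Phi(1,1,1)=t(x,y,z)$, so the identity $t(x,y,z)=yx^{-1}z$ drops out in one stroke.
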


\begin{proof} Assume that $X$ is a $1$-step groupspace. We introduce a binary and a unary operation $b:X\times X\to X$ and $u:X\to X$ in the following way. Let us fix an element $e\in X$. Let $u(x)$ be the unique completion of the corner $f(0,0)=x,f(1,0)=e,f(0,1)=e$ and let $b(x,y)$ be the unique completion of the corner $f(0,0)=e,f(1,0)=x,f(0,1)=y$. We claim that $(X,b,u,e)$ is a group where $b$ is the multiplication and $u$ is the inverse map end $e$ is the identity element. 

Let us first prove the associativity: $b(x,b(y,z))=b(b(x,y),z)$. To see this we define the corner map $f:\{0,1\}^3_*\to X$ by $f(0,0,0)=e,f(1,0,0)=x,f(0,1,0)=y,f(0,0,1)=z,f(1,1,0)=b(x,y),f(1,0,1)=b(x,z),f(0,1,1)=b(y,z)$. Since $f$ satisfies the requirement of the completion axiom we have that it can be extended to a map $c\in C^3(X)$ which agrees with $f$ on $\{0,1\}^3_*$. Now let $\phi_1,\phi_2:\{0,1\}^2\to\{0,1\}^3$ be defined by $\phi_1(x_1,x_2)=(x_1,x_1,x_2)$ and $\phi_2(x_1,x_2)=(x_1,x_2,x_2)$. Both maps are morphisms in $\mathcal{G}$ and so we have that both $c\circ \phi_1$ and $c\circ\phi_2$ are in $C^2(X)$. On the other hand by unique completion we have that $$b(b(x,y),z)=f\circ \phi_1(1,1)=c(1,1,1)=f\circ \phi_2(1,1)=b(x,b(y,z)).$$ 

Now we prove that $b(e,x)=x$. Let $\phi:\{0,1\}^2\to\{0,1\}$ be the projection $\phi(x_1,x_2)=x_2$. Since $\phi\in\mathcal{G}$ we by the ergodicity axiom that $f\circ\phi\in C^2(X)$ where $f(0)=e, f(1)=x$. Notice that $f\circ\phi(0,0)=e,f\circ\phi(1,0)=e,f\circ\phi(0,1)=x,f\circ\phi(1,1)=x=b(e,x)$.

We claim also that $b(u(x),x)=e$. Notice that by definition of $u$, the map $f:\{0,1\}^2\to X$ defined by $f(0,0)=x,f(0,1)=e,f(1,0)=e,f(1,1)=u(x)$ is in $C^2(X)$. Let $\phi(x_1,x_2):=(x_1,1-x_2)$. Then $f\circ\phi\in C^2(X)$. On the other hand $f\circ\phi(0,0)=e,f\circ\phi(1,0)=u(x),f\circ\phi(0,1)=x,f\circ\phi(1,1)=e=b(u(x),x)$.

Recall that right inverse and right identity is enough for the group axioms and thus we finished the proof of the claim that $(X,b,u,e)$ is a group. From now on we will use normal group operations (product and inverse) to replaece $b$ and $u$. It remains to show that $\mathcal{D}_1(X,b,u,e)$ has the same same cubes as $X$. To see this let $x,y,z\in X$ be arbitrary elements and let $f:\{0,1\}^3\to X$ be defined by $f(1,0,0)=x,f(1,1,0)=y,f(1,0,1)=z,f(1,1,1)=yx^{-1}z,f(0,0,0)=e,f(0,1,0)=x^{-1}y,f(0,0,1)=x^{-1}z,f(0,1,1)=x^{-1}yx^{-1}z$. The values of $f$ are chosen so that by the unique completion rules it has to be in $C^3(X)$. On the other hand the composition of $f$ with the map $\phi(x_1,x_2)=(1,x_1,x_2)$ shows that the unique completion of the corner map defined by $g(0,0)=x,g(1,0)=y,g(0,1)=z$ is $yx^{-1}z$. Thus $C^2(X)=C^2(\mathcal{D}_1(X,b,u,e))$ holds. By lemma \ref{k-step-det}, in a $1$-step groupspace $C_2(X)$ determines $C^k(X)$ for every $k$ and thus $C^k(X)=C^k(\mathcal{D}_k(X,b,u,e))$ holds.
\end{proof}

\section{Abelian groups with higher degree structures}\label{highdegabgroups}

Abelian groups with higher degree structures were introduced in \cite{Szegnil} and they play a crucial role in nilspace theory. These structures will also be crucial in understanding groupspaces in general.
We will need the following notation.

\begin{definition} Let $n\in\mathbb{N}$ , $A$ be an abelian group and let $f:\{0,1\}^n\to A$ be an arbitrary function. then 
$$w(f):=\sum_{v\in\{0,1\}^{n}}(-1)^{h(v)}f(v).$$
\end{definition}

The next theorem introduces higher degree structures on abelian groups. 

\begin{theorem}[Higher degree structures on abelian groups]\label{hdsog} Let $A$ be an arbitrary abelian group and let $k\geq 1$ be a natural number.  Then there exists a unique nilspace structure $\mathcal{D}_k(A)$ on the set $A$ with the property that $c\in C^{k+1}(\mathcal{D}_k(A))$ if and only if $w(c)=0$.
\end{theorem}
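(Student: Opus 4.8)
The plan is to define the candidate cube sets $C^{n}(\mathcal{D}_k(A))$ explicitly and then verify the three groupspace axioms (in fact the stronger nilspace axioms, since we want a nilspace structure) together with the uniqueness claim. The natural definition is to declare, for each $n$, that $c:\{0,1\}^n\to A$ lies in $C^n(\mathcal{D}_k(A))$ precisely when $w(c\circ\psi)=0$ holds for every morphism $\psi\in\hom_{\mathcal{N}}(\{0,1\}^{k+1},\{0,1\}^n)$. This is forced: the theorem prescribes $C^{k+1}$ via the single condition $w(c)=0$, and the presheaf axiom then demands that pulling back any higher cube along a morphism into $\{0,1\}^{k+1}$ must again satisfy $w=0$. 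So I would first check that with this definition the prescribed description of $C^{k+1}$ is recovered (the identity morphism gives $w(c)=0$, and one must check no additional constraints are imposed in dimension $k+1$, i.e.\ that every $\psi\in\hom_{\mathcal{N}}(\{0,1\}^{k+1},\{0,1\}^{k+1})$ automatically preserves the single condition $w=0$). This last point is the crux: I expect it to follow from the fact that $w(c\circ\psi)$ equals $\pm w(c)$ or $0$ depending on whether $\psi$ is a bijection of the right parity or collapses a coordinate.

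Granting the definition, the presheaf axiom is then essentially built in: if $\phi\in\hom_{\mathcal{N}}(\{0,1\}^m,\{0,1\}^n)$ and $c\in C^n$, then for any $\psi\in\hom_{\mathcal{N}}(\{0,1\}^{k+1},\{0,1\}^m)$ the composite $\phi\circ\psi$ is again a morphism in $\mathcal{N}$ by the category property, so $w((c\circ\phi)\circ\psi)=w(c\circ(\phi\circ\psi))=0$, giving $c\circ\phi\in C^m$. The ergodicity axiom requires $C^1=A^{\{0,1\}}$; here any $c:\{0,1\}^1\to A$ is unconstrained because $k\geq 1$ means $k+1\geq 2>1$, so there are no surjective-enough morphisms from $\{0,1\}^{k+1}$ forcing a relation, and one checks directly that the only morphisms land in a way that imposes no condition. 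The substantive work is the completion axiom.

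For completion I would argue as follows. Given $f:\{0,1\}^n_*\to A$ with each restriction $f\circ e^n_{i,0}\in C^{n-1}$, I want to find a value $a:=c(1^n)$ making the full map $c$ a cube. The key algebraic identity is that $w(c)$, viewed as a function of the single unknown $c(1^n)$, is affine-linear with leading coefficient $(-1)^n$; hence for $n=k+1$ the equation $w(c)=0$ determines $c(1^n)$ uniquely, and for $n>k+1$ I must instead verify that the already-determined faces force all the required $w(c\circ\psi)=0$ conditions simultaneously and consistently. The cleanest route is to prove completion and uniqueness first in the critical dimension $n=k+1$ (where $w(c)=0$ solves uniquely for the missing vertex, giving the $k$-step property), and then bootstrap to all $n$ using Lemma \ref{up-det}-style reasoning: once the structure is known to be $k$-step, the set $C^{k+1}$ determines every $C^n$, so one only needs consistency of the higher conditions, which follows because every $\psi\in\hom_{\mathcal{N}}(\{0,1\}^{k+1},\{0,1\}^n)$ either factors through a face $e^n_{i,0}$ (handled by hypothesis) or hits the vertex $1^n$, in which case the value is pinned by the $n=k+1$ solution.

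The main obstacle I anticipate is the compatibility check in dimensions $n>k+1$: one must ensure that the finitely many constraints $w(c\circ\psi)=0$, coming from the many morphisms $\psi$ that do involve the apex vertex $1^n$, are not merely individually solvable but mutually consistent and consistent with the prescribed faces. I expect this to reduce to a statement that any two such morphisms agree on how they predict $c(1^n)$, which in turn follows from the linearity of $w$ in the apex value together with the presheaf-type coherence already established in lower dimensions; making this bookkeeping precise, rather than any deep idea, is where the real effort lies. Uniqueness of the nilspace structure is comparatively easy: any structure with the prescribed $C^{k+1}$ is $k$-step by the same $w(c)=0$ solvability argument, and then Lemma \ref{k-step-det} forces agreement in all dimensions.
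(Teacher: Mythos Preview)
Your approach is natural and nearly works, but there is a concrete error in the completion argument. You claim that every $\psi\in\hom_{\mathcal{N}}(\{0,1\}^{k+1},\{0,1\}^n)$ either factors through some face map $e^n_{i,0}$ or hits the apex $1^n$. This dichotomy is false: take $n=3$, $k=1$, and $\psi(x_1,x_2)=(x_1,1-x_1,x_2)$. The image $\{(0,1,0),(1,0,0),(0,1,1),(1,0,1)\}$ misses $1^3$ yet is not contained in any coordinate hyperplane $\{v:v_i=0\}$, so $\psi$ does not factor through any $e^3_{i,0}$. The condition $w(c\circ\psi)=0$ does follow from the face hypotheses, but only after combining several of them linearly; it is not handled by any single face. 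The same issue recurs for the morphisms that \emph{do} hit $1^n$: for $n>k+1$ there are many such $\psi$, each giving an affine equation for $c(1^n)$, and showing they are mutually consistent requires precisely the cancellation you have not supplied. So the ``bookkeeping'' you flag is not merely tedious; your stated organizing principle for it is wrong.

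The paper circumvents this by testing against a smaller family. It defines $V'(A,n,k)$ via the simplicial embeddings $s_v$ with $h(v)\ge k+1$ only, not all morphisms; for these your dichotomy \emph{is} literally true, since $v\neq 1^n$ forces $v_i=0$ for some $i$, and then $s_v$ factors through $e^n_{i,0}$. This makes completion immediate. The price is that the presheaf axiom is no longer automatic, so the paper introduces a second description $V(A,n,k)$ as the subgroup of $A^{\{0,1\}^n}$ generated by pullbacks $f\circ\phi$ with $\phi\in\hom_{\mathcal{N}}(\{0,1\}^n,\{0,1\}^m)$, $m\le k$, for which presheaf is tautological, and then proves $V=V'$ (Lemmas~\ref{lhd4}--\ref{lhd6} and Proposition~\ref{phd6}). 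Your definition in fact coincides with both, but establishing that equality is essentially the same work; so either shrink your test family to the $s_v$ as the paper does, or prove directly that the extra $\psi$-conditions are redundant modulo the $s_v$-conditions. Your uniqueness argument via Lemma~\ref{k-step-det} is correct and matches the paper.
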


\begin{remark}\label{polyrem} It is interesting to mention that continuous morphisms from $\mathcal{D}_1(\mathbb{R})$ to $\mathcal{D}_k(\mathbb{R})$ are the polynomials in $\mathbb{R}[x]$ of degree at most $k$.
\end{remark} 

Note that it follows from theorem \ref{hdsog} that $\mathcal{D}_k(A)$ is a $k$-step nilspace and furthermore $C^i(\mathcal{D}_k(A))=A^{\{0,1\}^i}$ holds for every $i\leq k$. This latter property is a strengthening of the ergodicity axiom and called $k$-ergodic. In the rest of this chapter we prove this theorem.

\begin{lemma}\label{lhd1} Let $\phi:\{0,1\}^n\to\{0,1\}^m$ be a map in $\hom_\mathcal{N}(\{0,1\}^n,\{0,1\}^m)$. Then either $\phi$ is injective or there exists $i\in [n]$ such that $\phi=\phi\circ r_i$ holds where $r_i$ is the reflection introduced in Chapter \ref{cubmor}.
\end{lemma}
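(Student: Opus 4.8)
The plan is to analyze the structure of a morphism $\phi\in\hom_\mathcal{N}(\{0,1\}^n,\{0,1\}^m)$ through the associated function $\gamma_\phi:[m]\to\mathbb{N}$ introduced in Definition \ref{cubmorph}. Recall that for each output coordinate $i\in[m]$, the map $p^m_i\circ\phi$ is either constant ($\gamma_\phi(i)=0$) or depends on exactly one input coordinate $\gamma_\phi(i)=j\in[n]$, through either $p^n_j$ or $1-p^n_j$. The key observation I would exploit is that the set of \emph{input} coordinates that actually influence $\phi$ is precisely the image $S:=\gamma_\phi([m])\cap[n]$ (i.e. the support of $\gamma_\phi$ viewed as indices in $[n]$). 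Any input coordinate $j\in[n]\setminus S$ is \emph{irrelevant}: flipping $v_j$ changes no output coordinate.

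First I would make this dichotomy precise. If every coordinate $j\in[n]$ appears in the image of $\gamma_\phi$, that is $S=[n]$, then I claim $\phi$ is injective. To see this, suppose $\phi(v)=\phi(v')$. For each $j\in[n]$, pick some $i\in[m]$ with $\gamma_\phi(i)=j$; then $p^m_i\circ\phi$ is an injective (indeed bijective) function of $v_j$ alone, being either $p^n_j$ or $1-p^n_j$. Since the $i$-th output coordinates agree, $v_j=v'_j$. As this holds for all $j$, we get $v=v'$, so $\phi$ is injective. Second, if $S\neq[n]$, then there exists some $i\in[n]$ not in the image of $\gamma_\phi$. For this $i$, no output coordinate depends on $v_i$, which means $p^m_s\circ\phi = p^m_s\circ\phi\circ r_i$ for every $s\in[m]$: composing with the reflection $r_i$ (which only flips the $i$-th input coordinate) leaves every output coordinate unchanged. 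Hence $\phi=\phi\circ r_i$, giving the second alternative.

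The two cases $S=[n]$ and $S\neq[n]$ are exhaustive and together yield the stated disjunction. The only mildly delicate point is verifying the injectivity direction carefully: one must confirm that a single output coordinate depending on $v_j$ suffices to recover $v_j$, which holds because all four admissible dependences other than the two constants are bijections $\{0,1\}\to\{0,1\}$; the constant functions simply do not contribute to $S$. I expect no real obstacle here, as the lemma is essentially a bookkeeping statement about which input coordinates survive in the output. The main thing to get right is correctly identifying that $\phi=\phi\circ r_i$ is exactly the coordinate-free way of saying ``the $i$-th input is irrelevant,'' and that $r_i$ is itself a morphism in $\mathcal{G}\subseteq\mathcal{N}$ (established in Lemma \ref{transim}) so the composition $\phi\circ r_i$ is a legitimate object to compare with $\phi$.
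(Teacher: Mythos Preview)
Your proposal is correct and follows essentially the same approach as the paper's proof: both hinge on whether every index $i\in[n]$ lies in the image of $\gamma_\phi$, concluding injectivity when it does and $\phi=\phi\circ r_i$ for a missing $i$ otherwise. Your version is more detailed (the paper simply asserts that non-injectivity forces some $i$ to be absent from the image of $\gamma_\phi$ and that $\phi$ then cannot depend on the $i$-th coordinate), but the underlying argument is identical.
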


\begin{proof} If $\phi$ is not injective then there exists $i\in[n]$ such that $\gamma_\phi(j)\neq i$ holds for every $j\in [m]$. It is clear that in this case the value of $\phi$ can not depend on the $i$-th coordinate and thus the claim of the lemma holds. 
\end{proof} 

\begin{lemma}\label{lhd2} Let $c:\{0,1\}^n\to A$ be a function such that $c=c\circ r_i$ holds for some $i\in [n]$. Then $w(c)=0$.
\end{lemma}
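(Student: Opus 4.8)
The plan is to exploit the involution structure of the reflection $r_i$ on the index set $\{0,1\}^n$ and pair up terms in the defining sum for $w(c)$. First I would observe that $r_i$ is a fixed-point-free involution: indeed $r_i\circ r_i=I_n$, and since $r_i$ always changes the $i$-th coordinate we have $r_i(v)\neq v$ for every $v$. Consequently $\{0,1\}^n$ decomposes into $2^{n-1}$ disjoint two-element orbits $\{v,r_i(v)\}$.

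Next I would record how $r_i$ interacts with the height function $h$. Since $r_i$ alters exactly the $i$-th coordinate, flipping it between $0$ and $1$, we have $h(r_i(v))=h(v)+1$ when $v_i=0$ and $h(r_i(v))=h(v)-1$ when $v_i=1$. In either case $h(r_i(v))\equiv h(v)+1\pmod 2$, so that $(-1)^{h(r_i(v))}=-(-1)^{h(v)}$.

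Finally, I would group the sum $w(c)=\sum_{v\in\{0,1\}^n}(-1)^{h(v)}c(v)$ according to the orbits of $r_i$. Using the hypothesis $c(v)=c(r_i(v))$ together with the sign computation above, the two contributions from each orbit cancel:
$$(-1)^{h(v)}c(v)+(-1)^{h(r_i(v))}c(r_i(v))=(-1)^{h(v)}c(v)-(-1)^{h(v)}c(v)=0.$$
Summing over the $2^{n-1}$ orbits yields $w(c)=0$. There is no genuine obstacle here; the argument is an elementary parity cancellation, and the only point requiring care is to verify that $r_i$ is indeed fixed-point-free and that it reverses the parity of $h$, both of which follow immediately from the definition of $r_i$ as the flip of a single coordinate.
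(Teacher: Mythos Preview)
Your proof is correct and follows exactly the same approach as the paper's own proof: pair each vertex $v$ with $r_i(v)$, observe that $c$ takes the same value at both while the signs $(-1)^{h(v)}$ and $(-1)^{h(r_i(v))}$ are opposite, and conclude that all terms cancel. The paper states this in a single sentence; your version simply spells out the details (that $r_i$ is a fixed-point-free involution reversing the parity of $h$), but the idea is identical.
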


\begin{proof} To see this, notice that $c(v)$ and $c(r_i(v))$ are counted with opposite signs in the definition of $w(c)$ but they have the same values and thus they cancel each other for every $v\in\{0,1\}^n$.
\end{proof} 

\begin{lemma}\label{lhd3} Let $f:\{0,1\}^n\to A$ be arbitrary, $m\in\mathbb{N}$ and assume that $\phi\in\hom_\mathcal{N}(\{0,1\}^m,\{0,1\}^n)$ is not injective. Then $w(f\circ\phi)=0$.
\end{lemma}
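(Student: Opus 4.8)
The plan is to reduce the statement directly to the two preceding lemmas by exhibiting a reflection symmetry of the composite $f\circ\phi$. The guiding observation is that non-injectivity of a morphism in $\mathcal{N}$ forces it to ignore one of its input coordinates (Lemma \ref{lhd1}), and any function that is invariant under a coordinate reflection has vanishing alternating sum (Lemma \ref{lhd2}). So the whole argument is a short chain of substitutions rather than a computation.

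First I would apply Lemma \ref{lhd1} to the given map $\phi\in\hom_\mathcal{N}(\{0,1\}^m,\{0,1\}^n)$. Since $\phi$ is assumed not to be injective, the lemma produces an index $i\in[m]$ and the corresponding reflection $r_i$ on the domain $\{0,1\}^m$ such that $\phi=\phi\circ r_i$. Here it is worth noting that the conclusion of Lemma \ref{lhd1} always refers to a reflection of the \emph{domain} of the map, so in our application $r_i$ acts on $\{0,1\}^m$.

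Next I would set $c:=f\circ\phi:\{0,1\}^m\to A$ and use the identity from the previous step to compute $c\circ r_i=f\circ\phi\circ r_i=f\circ\phi=c$. Thus $c$ is invariant under the reflection $r_i$ of $\{0,1\}^m$. Applying Lemma \ref{lhd2} to $c$ (with $m$ playing the role of $n$) then gives $w(c)=w(f\circ\phi)=0$, which is exactly the claim.

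Since the proof is merely the composition of Lemma \ref{lhd1} and Lemma \ref{lhd2}, I do not expect any genuine obstacle. The only point demanding a little care is the index bookkeeping: one must check that the reflection delivered by Lemma \ref{lhd1} acts on the source cube $\{0,1\}^m$ of $\phi$, so that it is the correct reflection to feed into Lemma \ref{lhd2}, whose hypothesis is about invariance of a function defined on that same cube.
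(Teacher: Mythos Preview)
Your proof is correct and matches the paper's own argument exactly: the paper simply states that the result is a direct consequence of Lemma~\ref{lhd1} and Lemma~\ref{lhd2}, and your write-up spells out precisely that chain, including the index bookkeeping between the domain and codomain.
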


\begin{proof} The proof is a direct consequence of lemma \ref{lhd1} and lemma \ref{lhd2}.
\end{proof}



\begin{definition}\label{cspaces} Let $k\geq 1,n$ be natural numbers and $A$ be an abelian group. Then we denote by $V(A,n,k)\subset A^{\{0,1\}^n}$ the subgroup generated by all functions of the form $f\circ\phi$ where $m\leq k$, $f:\{0,1\}^m\to A$ is an arbitrary function and $\phi\in\hom_\mathcal{N}(\{0,1\}^n,\{0,1\}^m)$. We denote by $V'(A,n,k)$ the set of all functions $f:\{0,1\}^n\to A$ such that $w(f\circ s_v)=0$ holds for every $v\in\{0,1\}^n$ with $h(v)\geq k+1$.
\end{definition}

Notice that $f\mapsto w(f\circ s_v)$ is a group homomorphism from $A^{\{0,1\}^n}$ to $A$ and thus its kernel is a subgroup. It follows that $V'(A,n,k)$ is an intersection of subgroups and thus it is a subgroup in $A^{\{0,1\}^n}$.

\begin{lemma}\label{lhd4} Let $k\geq 1,n$ be natural numbers and $A$ be an abelian group. Then  $V(A,n,k)\subseteq V'(A,n,k)$. 
\end{lemma}

\begin{proof} Since $V'(A,n,k)$ is a subgroup in $A^{\{0,1\}^n}$ it is enough to show that all the generators of $V(A,n,k)$ are contained in it. To see this let $m\leq k$, $f:\{0,1\}^m\to A$ be a function, let $\phi\in\hom_\mathcal{N}(\{0,1\}^n,\{0,1\}^m)$ and let $v\in\{0,1\}^n$ be such that $h(v)\geq k+1$. Then $\phi\circ s_v\in\hom_\mathcal{N}(\{0,1\}^{h(v)},\{0,1\}^m)$ and thus it can not be injective since $m<h(v)$. It follows from lemma \ref{lhd3} that $w(f\circ\phi\circ s_v)=0$ and thus $f\circ\phi\in V'(A,n,k)$.
\end{proof}

\begin{lemma}\label{lhd5} Let $k\geq 1,n$ be natural numbers and $A$ be an abelian group. If $f,g\in V'(A,n,k)$ and $f(v)=g(v)$ holds for every $v\in\{0,1\}^n$ with $h(v)\leq k$ then $f=g$.
\end{lemma}

\begin{proof} Assume by contradiction that $f\neq g$. Let $v\in\{0,1\}^n$ be a vector of minimal hight $h(v)$ such that $f(v)\neq g(v)$. We have that $h(v)\geq k+1$. Notice that $f\circ s_v$ and $g\circ s_v$ differ only at $1^{h(v)}$ and thus $w(f\circ s_v)\neq w(g\circ s_v)$. On the other hand $f,g\in V'(A,n,k)$ implies that $w(f\circ s_v)=w(g\circ s_v)=0$, a contradiction.
\end{proof}

For $n,m\in\mathbb{N}$ let $\{0,1\}^n_m$ denote the set of vectors $v$ in $\{0,1\}^n$ with $h(v)\leq m$. 

\begin{lemma}\label{lhd6} Let $k\geq 1,n$ be natural numbers and $A$ be an abelian group. Let $f:\{0,1\}^n_k\to A$ be an arbitrary function. Then there exists $g\in V(A,n,k)$ such that its restriction to $\{0,1\}^n_k$ equals to $f$.
\end{lemma}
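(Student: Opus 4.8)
The plan is to prove the slightly stronger statement that the restriction map $\rho\colon V(A,n,k)\to A^{\{0,1\}^n_k}$, sending $g\mapsto g|_{\{0,1\}^n_k}$, is surjective. Since $\rho$ is a homomorphism of abelian groups, it suffices to produce, for an arbitrary prescribed $f$, an explicit preimage assembled from the generators of $V(A,n,k)$ listed in Definition \ref{cspaces}.

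The key building blocks I would use are pullbacks under the coordinate projections $p_T$ from Chapter \ref{cubmor}. For $v\in\{0,1\}^n_k$ set $m:=h(v)$ and $T:=\supp{v}$, and let $\delta_a\colon\{0,1\}^m\to A$ be the function taking the value $a\in A$ at $1^m$ and $0$ elsewhere. Since $m\le k$ and $p_T\in\hom_\mathcal{G}(\{0,1\}^n,\{0,1\}^m)\subseteq\hom_\mathcal{N}(\{0,1\}^n,\{0,1\}^m)$, the composite $U_{v,a}:=\delta_a\circ p_T$ is, by definition, a generator of $V(A,n,k)$. A direct computation shows $p_T(w)=1^m$ exactly when $T\subseteq\supp{w}$, so $U_{v,a}(w)=a$ if $\supp{v}\subseteq\supp{w}$ and $U_{v,a}(w)=0$ otherwise; that is, $U_{v,a}$ is the scaled \emph{up-set indicator} of $v$ in the support order. (For $v=0^n$ this is the constant function $a$, the case $m=0$, which is legitimate since $k\ge 1$.)

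With these in hand, the plan is to match $f$ one vertex at a time, processing the vertices of $\{0,1\}^n_k$ in order of increasing height. The point is that adding a term $U_{v,c}$ alters $g(w)$ only at vertices $w$ with $\supp{v}\subseteq\supp{w}$, all of height $\ge h(v)$, with equality only at $w=v$; hence a correction made at $v$ disturbs neither the strictly lower vertices nor the vertices of the same height as $v$. One can therefore choose $c_v\in A$ so that the running sum $g:=\sum_v U_{v,c_v}$ attains the value $f(v)$ at $v$. Equivalently, one solves the unitriangular system $\sum_{u\subseteq v}c_u=f(v)$ by Möbius inversion over the Boolean lattice, $c_v=\sum_{u\subseteq v}(-1)^{h(v)-h(u)}f(u)$, where every $u$ occurring lies in $\{0,1\}^n_k$ so the formula is defined. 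The resulting $g$ is a finite sum of generators, hence lies in $V(A,n,k)$, and satisfies $g|_{\{0,1\}^n_k}=f$ by construction. There is no analytic difficulty here; the only thing one must get right is the combinatorics, namely that the family $\{U_{v,a}\}$ is triangular with respect to inclusion of supports. The main conceptual step is thus identifying the projection pullbacks $\delta_a\circ p_{\supp{v}}$ as the correct elementary pieces sitting inside $V(A,n,k)$; once this is recognized the inversion is routine and completes the proof.
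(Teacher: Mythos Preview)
Your proof is correct and follows essentially the same approach as the paper: the paper defines the identical building blocks $h_{v,a}:=\delta_a\circ p_{\supp{v}}$ (your $U_{v,a}$) and then corrects level by level in increasing height, exploiting the same triangularity property you identify. The only cosmetic difference is that the paper writes the correction as a recursion $g_{i+1}=g_i+\sum_{h(v)=i+1}h_{v,f(v)-g_i(v)}$ rather than giving the closed-form M\"obius inversion $c_v=\sum_{u\subseteq v}(-1)^{h(v)-h(u)}f(u)$.
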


\begin{proof} For $v\in\{0,1\}^n$ and $a\in A$ let $h_{v,a}:=f\circ p_{\supp{v}}$ where $f:\{0,1\}^{h(v)}\to A$ is the function which takes the value $a$ at $1^{h(v)}$ and $0$ everywhere elese. We have that if $h(v)\leq k$ then $h_{v,a}\in V(A,n,k)$. We have that $h_{v,a}(w)=0$ if $h(w)<h(v)$ or if $h(w)=h(v)$ and $w\neq v$. We also have that $h_{v,a}(v)=a$. We define a sequence of functions $g_0,g_1,\dots,g_k:\{0,1\}^n\to A$ recursively. We define $g_0$ to be the constant $f(0^n)$ function. It is clear that $g_0\in V(A,n,k)$. Assume that $g_i$ is defined for some $i<k$. Then we set $$g_{i+1}:=g_i+\sum_{v\in\{0,1\}^n,h(v)=i+1}h_{v,f(v)-g_i(v)}.$$ It is clear from this definition that $g_i\in V(A,n,k)$ holds for $0\leq i\leq k$ and that $g_i$ agrees with $f$ on $\{0,1\}^n_i$. Then $g:=g_k$ satisfies the requirement of the lemma. 
\end{proof}

\begin{proposition}\label{phd6} Let $k\geq 1,n$ be natural numbers and $A$ be an abelian group. Then $V'(A,n,k)=V(A,n,k)$. 
\end{proposition}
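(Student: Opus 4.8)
The plan is to prove the nontrivial inclusion $V'(A,n,k)\subseteq V(A,n,k)$, since the reverse inclusion $V(A,n,k)\subseteq V'(A,n,k)$ is already established in Lemma \ref{lhd4}. All the technical work has in fact been carried out in the three preparatory lemmas, so the proof of the proposition itself will be a short assembly argument combining a construction step with a rigidity step.

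Concretely, I would start with an arbitrary element $f\in V'(A,n,k)$ and restrict attention to its values on the low-height part $\{0,1\}^n_k$ of the cube. By Lemma \ref{lhd6}, applied to the restriction $f|_{\{0,1\}^n_k}$, there exists an element $g\in V(A,n,k)$ whose restriction to $\{0,1\}^n_k$ coincides with that of $f$. The point of invoking Lemma \ref{lhd6} is that it manufactures an honest element of the \emph{generated} subgroup $V(A,n,k)$ that matches $f$ on all vectors of height at most $k$.

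Next I would promote $g$ into the same ambient subgroup as $f$: by Lemma \ref{lhd4} we have $V(A,n,k)\subseteq V'(A,n,k)$, so $g\in V'(A,n,k)$ as well. At this point $f$ and $g$ both lie in $V'(A,n,k)$ and agree on every $v\in\{0,1\}^n$ with $h(v)\leq k$. The rigidity statement of Lemma \ref{lhd5} then forces $f=g$, and therefore $f=g\in V(A,n,k)$. Since $f\in V'(A,n,k)$ was arbitrary, this yields $V'(A,n,k)\subseteq V(A,n,k)$, which together with Lemma \ref{lhd4} gives the claimed equality $V'(A,n,k)=V(A,n,k)$.

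The conceptual heart of the argument is the complementary pairing of Lemmas \ref{lhd5} and \ref{lhd6}: the former says that an element of $V'(A,n,k)$ is \emph{determined} by its values on $\{0,1\}^n_k$, while the latter says that \emph{every} prescription of values on $\{0,1\}^n_k$ is realized by some element of $V(A,n,k)$. Given these, there is no real obstacle left in the proposition; the only thing to be careful about is the bookkeeping of which subgroup each object belongs to, namely that one must explicitly use Lemma \ref{lhd4} to place $g$ inside $V'(A,n,k)$ before Lemma \ref{lhd5} can be applied to the pair $(f,g)$.
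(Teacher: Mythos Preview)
Your proposal is correct and follows essentially the same approach as the paper: reduce to the inclusion $V'(A,n,k)\subseteq V(A,n,k)$ via Lemma~\ref{lhd4}, use Lemma~\ref{lhd6} to match an arbitrary $f\in V'(A,n,k)$ on $\{0,1\}^n_k$ by some $g\in V(A,n,k)$, and then invoke Lemmas~\ref{lhd4} and~\ref{lhd5} to force $f=g$. Your write-up is somewhat more explicit about the bookkeeping, but the argument is the same.
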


\begin{proof} According to lemma \ref{lhd4} it is enough to show that $V'(A,n,k)\subseteq V(A,n,k)$. Let $f\in V'(A,n,k)$. According to lemma \ref{lhd6} there exists $g\in V(A,n,k)$ such that $g$ is equal to $f$ on the set $\{0,1\}^n_k$. It follows from lemma \ref{lhd4} and lemma \ref{lhd5} that $f=g$ and thus $f\in V(A,n,k)$.
\end{proof}

\begin{definition} Let $A$ be an abelian group and $k\in\mathbb{N}$ with $k\geq 1$. Then we define the cubic structure $\mathcal{D}_k(A)$ by $C^n(\mathcal{D}_k(A)):=V(A,n,k)=V'(A,n,k)$ for $n\in\mathbb{N}$.
\end{definition}

Notice that $C^{k+1}(\mathcal{D}_k(A))=V'(A,k+1,k)$ consists of all maps $c:\{0,1\}^{k+1}\to A$ with $w(c)=0$. Thus this cubic structure satisfies theorem \ref{hdsog}. 

\begin{proposition}\label{phd7} Let $k\geq 1$ and $A$ be an abelian group. Then $A$ with the cubic structure $\mathcal{D}_k(A)$ is a nilspace. 
\end{proposition}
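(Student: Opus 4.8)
The plan is to verify the three nilspace axioms (with $\mathcal{C}=\mathcal{N}$) for the cube structure $\mathcal{D}_k(A)$, freely using the identity $C^n(\mathcal{D}_k(A))=V(A,n,k)=V'(A,n,k)$ from Proposition \ref{phd6} and choosing whichever of the two descriptions is more convenient for each axiom.

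For the \emph{presheaf} axiom I would use the generator description $V$. If $\psi\in V(A,m,k)$ and $\phi\in\hom_\mathcal{N}(\{0,1\}^n,\{0,1\}^m)$, write $\psi$ as a $\mathbb{Z}$-linear combination of generators $g\circ\chi$ with $\chi\in\hom_\mathcal{N}(\{0,1\}^m,\{0,1\}^l)$ and $l\le k$. Since precomposition with $\phi$ is a group homomorphism on $A^{\{0,1\}^m}$, it suffices to note $(g\circ\chi)\circ\phi=g\circ(\chi\circ\phi)$ and that $\chi\circ\phi\in\hom_\mathcal{N}(\{0,1\}^n,\{0,1\}^l)$ because $\mathcal{N}$ is a category; hence $\psi\circ\phi$ is again a combination of generators and lies in $V(A,n,k)$. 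For \emph{ergodicity} I would use $V'$: since $k\ge1$, every $v\in\{0,1\}^1$ has $h(v)\le 1\le k$, so the defining conditions of $V'(A,1,k)$ are vacuous and $C^1(\mathcal{D}_k(A))=A^{\{0,1\}}$. The same observation gives $C^i(\mathcal{D}_k(A))=A^{\{0,1\}^i}$ for all $i\le k$, i.e. the stronger $k$-ergodicity.

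The main work, and the expected obstacle, is the \emph{completion} axiom, which I would again handle through $V'$. Given $f\colon\{0,1\}^n_*\to A$ with $f\circ e^n_{i,0}\in V'(A,n-1,k)$ for all $i$, the key observation is that the vertex $1^n$ lies in the image of $s_v$ only for $v=1^n$, since that image is exactly $\{w:\supp{w}\subseteq\supp{v}\}$. Consequently every defining condition $w(c\circ s_v)=0$ of $V'(A,n,k)$ with $v\neq1^n$ depends only on the given restriction $f$, whereas the single condition coming from $v=1^n$ (present precisely when $n\ge k+1$) is the only one involving the undetermined value at $1^n$. The crucial point is that the conditions with $v\neq1^n$ hold automatically: for such $v$ with $h(v)\ge k+1$ choose $i\notin\supp{v}$ and let $u\in\{0,1\}^{n-1}$ be $v$ with its $i$-th coordinate deleted; a direct check of the coordinate functions shows $s_v=e^n_{i,0}\circ s_u$ with $h(u)=h(v)\ge k+1$, whence $w(f\circ s_v)=w\big((f\circ e^n_{i,0})\circ s_u\big)=0$ by the hypothesis $f\circ e^n_{i,0}\in V'(A,n-1,k)$.

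It then remains to fix the value at $1^n$. If $n\le k$ there is no condition at all and any value works. If $n\ge k+1$ the only outstanding condition is $w(c)=0$, in which the coefficient of $c(1^n)$ equals $(-1)^{n}$; solving this single linear equation determines $c(1^n)$ uniquely and yields the required $c\in V'(A,n,k)$ extending $f$. This simultaneously verifies completion and shows that the completion is unique whenever $n\ge k+1$, confirming that $\mathcal{D}_k(A)$ is in fact a $k$-step nilspace. The only genuinely technical step is the verification of the identity $s_v=e^n_{i,0}\circ s_u$, that is, the compatibility of the simplicial embeddings with the face maps under the order-preserving identification of supports; everything else is linear bookkeeping with the operator $w$.
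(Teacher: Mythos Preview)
Your proof is correct and follows essentially the same route as the paper: both use the generator description $V(A,n,k)$ for the presheaf axiom (reducing to closure of $\mathcal{N}$ under composition) and the $V'(A,n,k)$ description for completion, with the same key identity $s_v=e^n_{i,0}\circ s_u$ (the paper writes $s_{v'}$ for your $s_u$) to show that the corner hypothesis forces $w(f\circ s_v)=0$ for all $v\neq 1^n$. Your write-up is slightly more explicit in separating the cases $n\le k$ and $n\ge k+1$ and in extracting the $k$-step property from uniqueness of the completion, but there is no substantive difference in method.
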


\begin{proof} We have from the definitions that $C^i(\mathcal{D}_k(A))=A^{\{0,1\}^i}$ holds for $i\leq k$ and thus the ergodicity axiom holds in an even stronger form. The see the presheaf axiom let $n,i\in\mathbb{N}$, $c\in C^n(\mathcal{D}_k(A))$ and assume that $\phi\in\hom_\mathcal{N}(\{0,1\}^i,\{0,1\}^n)$. We show that $c\circ\phi\in V(A,i,k)$. Since $c\in V(A,n,k)$ we have that it is generated (in the group $A^{\{0,1\}^n}$) by functions of the form $f\circ\psi$ where $f:\{0,1\}^m\to A$ is an arbitrary function and $\psi\in\hom_\mathcal{N}(\{0,1\}^n,\{0,1\}^m)$ for $m\leq k$. Using linearity, we obtain that $c\circ\phi$ is generated (in the group $A^{\{0,1\}^i}$) by functions of the form $(f\circ\psi)\circ\phi$ where $f$ is as above. Since $(f\circ\psi)\circ\phi=f\circ(\psi\circ\phi)$ and $\psi\circ\phi$ is a morphism we obtain that $(f\circ\psi)\circ\phi\in V(A,i,k)$. This finishes the proof of the presheaf axiom. 

To see the completion axiom let $f:\{0,1\}^n_*\to A$ be a function satisfying the condition of definition \ref{maindef}. It is clear from this condition that for every $v\in\{0,1\}^n_*$ with $h(v)>k$ we have that $w(f\circ s_v)=0$. This follows from the fact that if $v_i=0$ holds for some $i\in [n]$ then there exists $v'$ with $e^n_{i,0}(v')=v$. Then $w(f\circ s_v)=w(f\circ e^n_{i,0}\circ s_{v'})=0$. Let $c:\{0,1\}^n\to A$ be the function uniquely determined by the property that $w(c)=0$ and $c(v)=f(v)$ for $v\in\{0,1\}^n_*$. It is clear that $c\in V'(A,n,k)$ and thus the completion axiom holds. 
\end{proof}

Theorem \ref{hdsog} follows directly from proposition \ref{phd7} and lemma \ref{up-det} by noticing that $\mathcal{D}_k(A)$ is $k$-step with this definition.

\section{Examples for higher step groupspaces}\label{exhigh}

All nilspaces are groupspaces and they provide examples for complicated families of $k$-step groupspaces for arbitrary $k$. In this chapter we give a family of examples of higher step groupspaces that are not nilspaces. Let $G$ be a non-commutative group with a non-trivial central subgroup $Z<G$ and let us fix a natural number $k$. We define a $k$-step groupspace structure $\mathcal{H}_{Z,k}(G)$ on $G$ by adding new cubes to $\mathcal{D}_1(G)$. We say that $c:\{0,1\}^n\to G$ is a cube in $\mathcal{H}_{Z,k}(G)$ if there exists a cube $f\in C^n(\mathcal{D}_k(Z))$ and $g\in C^n(\mathcal{D}_1(G))$ such that $c(x)=f(x)g(x)$ holds for every $x\in\{0,1\}^n$. We will prove the next theorem.

\begin{theorem}\label{groupex} The cube structure $\mathcal{H}_{Z,k}(G)$ is a $k$-step groupspace.
\end{theorem}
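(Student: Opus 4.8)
The plan is to verify the three groupspace axioms for $\mathcal C=\mathcal G$ and then to establish uniqueness of completion at dimension $k+1$. Throughout I write $\pi\colon G\to G/Z$ for the quotient homomorphism, which exists since $Z$ is central hence normal; by Theorem \ref{class1step} it induces a groupspace morphism $\mathcal D_1(G)\to\mathcal D_1(G/Z)$. The presheaf and ergodicity axioms are immediate. For ergodicity, given any $c\colon\{0,1\}\to G$ I take $f\equiv 1$ (which lies in $C^1(\mathcal D_k(Z))$ since $k\ge 1$) and $g=c\in C^1(\mathcal D_1(G))$, so $c=fg$. For the presheaf axiom, given $\psi=fg\in C^n(\mathcal H_{Z,k}(G))$ and $\phi\in\hom_{\mathcal G}(\{0,1\}^m,\{0,1\}^n)$, I write $\psi\circ\phi=(f\circ\phi)(g\circ\phi)$; here $f\circ\phi\in C^m(\mathcal D_k(Z))$ because $\mathcal D_k(Z)$ is a nilspace (Proposition \ref{phd7}) and $\hom_{\mathcal G}\subseteq\hom_{\mathcal N}$, while $g\circ\phi\in C^m(\mathcal D_1(G))$ because $\mathcal D_1(G)$ is a groupspace.

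The technical heart is an isolated lemma: if $u,v\in C^n(\mathcal D_1(G))$ are simple cubes with $\pi\circ u=\pi\circ v$, then the pointwise quotient $uv^{-1}$ is $Z$-valued and in fact lies in $C^n(\mathcal D_1(Z))\subseteq C^n(\mathcal D_k(Z))$, the inclusion holding because $V(Z,n,1)\subseteq V(Z,n,k)$. To prove it I write $u(x)=a_0\prod_i a_i^{x_i}$ and $v(x)=b_0\prod_i b_i^{x_i}$; evaluating $\pi\circ u=\pi\circ v$ on $\{0,1\}^n_1$ gives $\pi(a_i)=\pi(b_i)$, so $z_i:=a_ib_i^{-1}\in Z$ for $0\le i\le n$. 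Substituting $a_i=z_ib_i$ and pulling the central factors $z_i^{x_i}$ to the front yields $u(x)=z_0\bigl(\prod_i z_i^{x_i}\bigr)v(x)$, whence $uv^{-1}(x)=z_0\prod_i z_i^{x_i}$ is a simple cube in $Z$.

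With this lemma the completion axiom follows by separating the $G/Z$ and $Z$ parts. Given a corner $f\colon\{0,1\}^n_*\to G$ whose faces $f\circ e^n_{i,0}=f_ig_i$ lie in $C^{n-1}(\mathcal H_{Z,k}(G))$, the projected corner $\pi\circ f$ has faces $\pi\circ g_i\in C^{n-1}(\mathcal D_1(G/Z))$, so by Theorem \ref{class1step} it completes to some $\bar c\in C^n(\mathcal D_1(G/Z))$. Using Lemma \ref{simpaffdef} and Lemma \ref{lift} I lift $\bar c$ to a simple cube $g_0\in C^n(\mathcal D_1(G))$ with $\pi\circ g_0=\bar c$. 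Then $\tilde f:=f\,g_0^{-1}$ is $Z$-valued on $\{0,1\}^n_*$, and on each lower face $\tilde f\circ e^n_{i,0}=f_i\cdot\bigl(g_i\,(g_0\circ e^n_{i,0})^{-1}\bigr)$; the lemma shows the second factor lies in $C^{n-1}(\mathcal D_k(Z))$, and since this cube set is a subgroup of $Z^{\{0,1\}^{n-1}}$ the whole face lies in $C^{n-1}(\mathcal D_k(Z))$. Thus $\tilde f$ is a corner in the nilspace $\mathcal D_k(Z)$, which completes (Proposition \ref{phd7}) to some $\hat f\in C^n(\mathcal D_k(Z))$; then $c:=\hat f\,g_0\in C^n(\mathcal H_{Z,k}(G))$ restricts to $f$ on $\{0,1\}^n_*$, as required.

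Finally, for the $k$-step property I take two cubes $c=Fg$ and $c'=F'g'$ in $C^{k+1}(\mathcal H_{Z,k}(G))$ agreeing on $\{0,1\}^{k+1}_*$. Projecting, $\pi g$ and $\pi g'$ are simple cubes in $G/Z$ agreeing on $\{0,1\}^{k+1}_1\subseteq\{0,1\}^{k+1}_*$, so $\pi g=\pi g'$ by Lemma \ref{eqon1}, and the lemma then gives $g'g^{-1}\in C^{k+1}(\mathcal D_k(Z))$. Since $Z$ is central, the equality $Fg=F'g'$ on $\{0,1\}^{k+1}_*$ rewrites there as $F(F')^{-1}=g'g^{-1}$, an equality on $\{0,1\}^{k+1}_*$ of two elements of the subgroup $C^{k+1}(\mathcal D_k(Z))$; because $\mathcal D_k(Z)$ is $k$-step (Theorem \ref{hdsog}) these cubes coincide everywhere, giving $c=c'$. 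I expect the isolated lemma on quotients of simple cubes to be the main obstacle: it is the point where the non-commutativity of $G$ must be absorbed by the centrality of $Z$, and both the completion and the uniqueness arguments rest on getting the substitution bookkeeping right and on recognizing that degree $1$ already suffices, so that the conclusion feeds into $\mathcal D_k(Z)$ for every $k$.
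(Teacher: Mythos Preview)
Your proof is correct and follows essentially the same strategy as the paper: isolate the lemma that two simple cubes in $G$ with the same image in $G/Z$ differ by a simple cube in $Z$, then use it to reduce the completion problem for $\mathcal H_{Z,k}(G)$ to the completion axiom in $\mathcal D_k(Z)$. Your isolated lemma is exactly the paper's Lemma~\ref{rescube}, proved in a different but equally valid way (direct computation with central shifts $z_i$ versus the paper's reduction to the $n=2$ case via Lemma~\ref{lowcube}).

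There are two minor differences worth noting. First, for completion the paper takes the simple cube $c\in C^n(\mathcal D_1(G))$ to be the one agreeing with $f$ on $\{0,1\}^n_1$ directly, whereas you complete $\pi\circ f$ in $G/Z$ and then lift via Lemma~\ref{lift}; both yield a simple cube whose image in $G/Z$ matches $\pi\circ f$, which is all that is needed. The paper's route is slightly shorter, yours makes the role of the quotient more explicit. Second, the paper's proof actually omits the verification of the $k$-step property; your final paragraph supplies it, and the argument (reduce via Lemma~\ref{eqon1} to $\pi g=\pi g'$, then use the lemma and the $k$-step property of $\mathcal D_k(Z)$ on the quotient $F(F')^{-1}=g'g^{-1}$) is correct and is the natural way to fill that gap.
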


Let $h:G\to G/Z$ denote the usual factor map. We will need the next lemma.

\begin{lemma}\label{rescube} Let $n\in\mathbb{N}$ and let $c_1,c_2\in C^n(\mathcal{D}_1(G))$ such that $h\circ c_1=h\circ c_2$. then the map $f:\{0,1\}^n\to Z$ defined by $f(x)=c_1(x)^{-1}c_2(x)$ is in $C^n(\mathcal{D}_1(Z))$.
\end{lemma}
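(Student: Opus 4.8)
The plan is to write both cubes in the simple form of (\ref{simplecubes}) and then compute $f$ explicitly, using centrality of $Z$ at every step. Since $c_1,c_2\in C^n(\mathcal{D}_1(G))$, there are elements $a_0,a_1,\dots,a_n$ and $a_0',a_1',\dots,a_n'$ in $G$ with $c_1(v)=a_0\prod_{i=1}^n a_i^{v_i}$ and $c_2(v)=a_0'\prod_{i=1}^n a_i'^{v_i}$ (these are uniquely determined by the bijection discussed at the start of Chapter \ref{cubesin}, though I only ever fix one choice). The hypothesis $h\circ c_1=h\circ c_2$ says precisely that $f(v)=c_1(v)^{-1}c_2(v)$ lies in the kernel $Z$ of $h$ for every $v\in\{0,1\}^n$, which already shows that $f$ maps into $Z$; the real content is that $f$ has the shape of a simple cube over the abelian group $Z$.

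First I would extract the ``edge data''. Evaluating at $v=0^n$ gives $z_0:=a_0^{-1}a_0'=f(0^n)\in Z$. Evaluating at the $i$-th standard basis vector $e_i$ gives $f(e_i)=a_i^{-1}a_0^{-1}a_0'a_i'=a_i^{-1}z_0a_i'$, and since $z_0$ is central this equals $z_0\,a_i^{-1}a_i'$; as $f(e_i)\in Z$ and $z_0\in Z$, I conclude $z_i:=a_i^{-1}a_i'\in Z$ for each $i\in[n]$. This is the one place where centrality does genuine work, and I expect it to be the crux: without it there is no reason the individual factors $a_i^{-1}a_i'$ should be central, and the clean product formula below would fail.

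With $z_0,z_1,\dots,z_n\in Z$ in hand, the remainder is bookkeeping. Writing $a_0'=z_0a_0$ and $a_i'=a_iz_i$ and repeatedly using that each $z_i$ is central, I can slide all the $z_i$ past the $a_j$ to obtain $c_2(v)=z_0\,c_1(v)\prod_{i=1}^n z_i^{v_i}$. Substituting into the definition of $f$ and again using centrality of $z_0$ to absorb the conjugation by $c_1(v)$ yields
$$ f(v)=c_1(v)^{-1}c_2(v)=c_1(v)^{-1}z_0\,c_1(v)\prod_{i=1}^n z_i^{v_i}=z_0\prod_{i=1}^n z_i^{v_i}. $$
This is exactly a simple cube of the form (\ref{simplecubes}) with all structure constants $z_0,z_1,\dots,z_n$ lying in $Z$, so by the definition of $\mathcal{D}_1(Z)$ we have $f\in C^n(\mathcal{D}_1(Z))$, which completes the proof.
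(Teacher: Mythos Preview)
Your argument is correct. You parametrize both cubes via the simple form (\ref{simplecubes}), extract the central ``difference data'' $z_0=a_0^{-1}a_0'$ and $z_i=a_i^{-1}a_i'$ by evaluating at $0^n$ and at the basis vectors $e_i$, and then verify directly that $f(v)=z_0\prod_{i=1}^n z_i^{v_i}$, which is a simple cube in $Z$.

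This is a genuinely different route from the paper's proof. The paper does not work with the parametrization at all; instead it first reduces to the case $n=2$ by invoking the principle that membership in $C^n(\mathcal{D}_1(Z))$ for a $1$-step groupspace can be tested on all $2$-dimensional sections $f\circ\phi$ with $\phi\in\hom_{\mathcal{G}}(\{0,1\}^2,\{0,1\}^n)$, and then for $n=2$ it checks the single relation $f(0,0)f(1,0)^{-1}f(1,1)f(0,1)^{-1}=1$ of Lemma~\ref{lowcube} by an explicit manipulation using centrality. Your approach is more elementary and entirely self-contained: it needs nothing beyond the definition of simple cubes and the centrality of $Z$, and in particular avoids the reduction-to-dimension-two step (which the paper states rather tersely). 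The paper's approach, on the other hand, highlights the structural fact that $1$-step groupspaces are determined by their $2$-cubes, and would generalize more readily to situations where one has a good characterization of low-dimensional cubes but not a convenient global parametrization.
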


\begin{proof} The map $f$ is in $\mathcal{D}_1(Z)$ if and only if $f\circ\phi\in C^2(\mathcal{D}_1(Z))$ holds for an arbitrary cube morphism $\phi:\{0,1\}^2\to\{0,1\}^n$ in $\mathcal{G}$. This reduces the problem to the case where $n=2$. In this case we have that
$$1=c_2(0,0)c_2(1,0)^{-1}c_2(1,1)c_2(0,1)^{-1}=$$
$$=c_1(0,0)f(0,0)f(1,0)^{-1}c_1(1,0)^{-1}c_1(1,1)f(1,1)f(0,1)^{-1}c_1(0,1)^{-1}=$$
$$=c_1(0,0)c_1(1,0)^{-1}c_1(1,1)c_1(0,1)^{-1}f(0,0)f(1,0)^{-1}f(1,1)f(0,1)^{-1}=$$
$$=f(0,0)f(1,0)^{-1}f(1,1)f(0,1)^{-1}$$
where the first and the fourth equality is by lemma \ref{lowcube}, the second one uses the fact $c_2(x)=c_1(x)f(x)$ and the third uses that $f$ takes values in the center of $G$ and thus they commute with everything. Lemma \ref{lowcube} shows that $f\in C^2(\mathcal{D}_1(Z))$ holds.
\end{proof}

Now we are ready to prove theorem \ref{groupex}.

\begin{proof} The first two groupspace axioms are trivial from the definitions. We check the completion axiom. Let $f:\{0,1\}^n_*\to G$ be a corner in $\mathcal{H}_{Z,k}(G)$ for some natural number $n>1$. For $i\in [n]$ we have that $f\circ e^n_{i,0}\in C^{n-1}(\mathcal{H}_{Z,k}(G))$ and thus it can be written as $c_ig_i$ where $c_i\in C^{n-1}(\mathcal{D}_1(G))$ and $g_i\in C^{n-1}(\mathcal{D}_k(Z))$. 
Let $c$ be the unique cube in $C^n(\mathcal{D}_1(G))$ which agrees with $f$ on $\{0,1\}^n_1$. We have that $h\circ f=h\circ c$ holds. It follows by lemma \ref{rescube} that the function $$x\mapsto (c\circ e^n_{i,0})(x)^{-1}(f\circ e^n_{i,0})(x)=(c\circ e^n_{i,0})(x)^{-1}c_i(x)g_i(x)$$ is a cube in $\mathcal{D}_k(Z)$. Consequently we have that the function $x\mapsto c(x)^{-1}f(x)$ defined on $\{0,1\}^n_*$ is a corner in $\mathcal{D}_k(Z)$. Thus it has a completion $c'\in C^n(\mathcal{D}_k(Z))$. We have that the function $x\mapsto c(x)c'(x)$ is in $C^n(\mathcal{H}_{Z,k}(G))$ and it is a completion of $f$.
\end{proof}

\section{simplicial gluing and the three-cube construction}

In this chapter we describe the generalization of an important algebraic tool to groupspaces which proved to be very useful in nilspace theory.

\begin{definition} A set $S\subseteq\{0,1\}^n$ is called simplicial if for every vector $v\in S$ and $A\subseteq \supp{v}$ we have $1_A\in S$.  
\end{definition}

It is clear that $S\subseteq\{0,1\}^n$ is simplicial if and only if for every $v\in S$ we have that the image of  $s_v$ is in $S$. A vector $v\in S$ is called maximal if there is no $w\in S$ with $w\neq v$ and $\supp{v}\subset\supp{w}$. Note that if $v\in S$ is maximal then $S\setminus\{v\}$ is again simplicial. 

\begin{definition} We say that a map $f:S\to X$ to a cubespace $X$ is cube preserving if for every $v\in S$ we have $f\circ s_v\in C^{h(v)}(X)$. 
\end{definition}

Not that $f$ is cube preserving if and only if $f\circ s_v\in C^{h(v)}(X)$ holds for every maximal vector $v\in S$.

\begin{lemma}\label{simpglue} Let $X$ be a groupspace and let $S\subseteq\{0,1\}^n$ be a simplicial set for some $n\in\mathbb{N}$. Assume that a map $f:S\to X$ is cube preserving. Then there is $\phi\in C^n(X)$ such that $\phi |_S=f$.
\end{lemma}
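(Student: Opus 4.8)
The plan is to prove the lemma by induction on the number of vertices in $\{0,1\}^n\setminus S$, that is, on how many values of the desired cube are not yet prescribed by $f$. In the base case $S=\{0,1\}^n$. Here the identity map $\{0,1\}^n\to\{0,1\}^n$ coincides with $s_{1^n}$ (since $\supp{1^n}=[n]$ and the $i$-th smallest element of $[n]$ is $i$), so applying the cube-preserving hypothesis to the maximal vector $1^n$ gives $f=f\circ s_{1^n}\in C^n(X)$ immediately, and $\phi:=f$ works.

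For the inductive step, suppose $S\neq\{0,1\}^n$ and choose $v\in\{0,1\}^n\setminus S$ of minimal height $m:=h(v)$. Minimality of the height guarantees that every $w$ with $\supp{w}\subsetneq\supp{v}$ already lies in $S$, hence $S\cup\{v\}$ is again simplicial and its complement is one element smaller. By the inductive hypothesis it therefore suffices to extend $f$ to a cube-preserving map on $S\cup\{v\}$; I will define the value $f(v)$ by completing a corner and then verify that the enlarged map stays cube preserving.

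The central computation is that $f\circ s_v$, restricted to $\{0,1\}^m_*$, is a corner in the sense of the completion axiom. Writing $\supp{v}=\{a_1<\dots<a_m\}$, the key combinatorial identity is $s_v\circ e^m_{i,0}=s_{v'}$, where $v':=1_{\supp{v}\setminus\{a_i\}}$: unwinding the definitions, both sides are the morphism in $\mathcal{G}$ that sends $\{0,1\}^{m-1}$ onto the sub-face of $\{0,1\}^n$ supported on $\supp{v}\setminus\{a_i\}$ (one checks coordinate by coordinate that each sends coordinate $l$ to $a_l$ for $l<i$ and to $a_{l+1}$ for $l\geq i$, with the remaining coordinates set to $0$). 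Since $v'$ has height $m-1<m$ it lies in $S$, so cube-preservation of $f$ yields $(f\circ s_v)\circ e^m_{i,0}=f\circ s_{v'}\in C^{m-1}(X)$ for every $i\in[m]$. Thus $f\circ s_v|_{\{0,1\}^m_*}$ satisfies the hypothesis of the completion axiom, which produces $c\in C^m(X)$ agreeing with $f\circ s_v$ on $\{0,1\}^m_*$; I then set $f(v):=c(1^m)$, so that the extended map $\tilde f$ satisfies $\tilde f\circ s_v=c\in C^m(X)$.

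It remains to confirm that $\tilde f\colon S\cup\{v\}\to X$ is cube preserving, for which it suffices to check $\tilde f\circ s_u\in C^{h(u)}(X)$ on maximal vectors $u$. For $u\in S$ whose image under $s_u$ avoids $v$, nothing has changed. For $u=v$ the required condition is exactly $\tilde f\circ s_v=c\in C^m(X)$, already arranged. The only possible trouble is a vector $u\in S$ with $v$ in the image of $s_u$; but that forces $\supp{v}\subseteq\supp{u}$, and simpliciality of $S$ would then put $v\in S$, contradicting $v\notin S$. Hence no such $u$ exists, $\tilde f$ is cube preserving, and the induction closes. I expect the only delicate points to be the combinatorial identity $s_v\circ e^m_{i,0}=s_{v'}$ and the bookkeeping ensuring $S\cup\{v\}$ remains simplicial; once these are settled, the completion axiom supplies all the real content.
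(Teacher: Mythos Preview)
Your proof is correct and follows essentially the same approach as the paper's: extend the cube-preserving map one vertex at a time, using the completion axiom to supply the new value, and observe that no simplicial embedding $s_u$ with $u$ in the old set can see the newly added vertex. The paper phrases the induction dually (it fixes a simplicial target $S'\supseteq S$, inducts on $|S'|$, and removes a maximal vertex of $S'$ rather than adding a minimal-height vertex of the complement), but the content is identical; if anything, your write-up is more explicit in verifying the key identity $s_v\circ e^m_{i,0}=s_{v'}$.
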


\begin{proof} We claim that if $S\subseteq S'$ for some simplicial set $S'$ then there is a cube preserving map $f':S'\to X$ such that $f'|_S=f$. We prove this claim by induction on $|S'|$. This statement holds for $|S'|=|S|$ because then $S'=S$. Assume that it is already true for some number $k$ and let $S'$ be such that $S\subseteq S'$ and $|S'|=k+1$. Let $v\in S'$ be a maximal element.  We have that $S'':=S'\setminus\{v\}$ is simplicial. By our induction hypothesis the statement holds for $S''$ and thus there is a cube preserving map $f'':S''\to X$ with $f''|_S=f$. It follows that $f''\circ s_v\circ e^{h(v)}_{i,0}\in C^{h(v)-1}(X)$ holds for every $i\in [h(v)]$. We get by the completion axiom in $X$ that there is a function $q\in C^{h(v)}(X)$ such that $q$ restricted to $\{0,1\}^{h(v)}_*$ is equal to the composition of $f''$ with the restriction of  $s_v$ to $\{0,1\}^{h(v)}_*$. Let $x:=q(1^{h(v)})$ and let us define $f':S'\to X$ such that $f'|{S''}=f''$ and $f'(v)=x$. We have that $f'\circ s_v=q\in C^{h(v)}(X)$. This show that the function $f'$ is cube preserving and so the proof of the claim is finished.

Now we apply the claim for $S':=\{0,1\}^n$ to obtain a cube preserving function $\phi:\{0,1\}^n\to X$ that extends $f$. Since $1^n\in S'$ we obtain that $\phi=\phi\circ s_{1^n}\in C^n(X)$. 
\end{proof}

Now we turn to the three-cube construction that proved to be a very useful tool in nilspace theory. To generalize it for groupspaces we have to be careful with the ordering of the coordinates. For $n\in\mathbb{N}$ let $T_n\subset \{0,1\}^{2n}$ be the set of vectors $v$ such that $v_{2i-1}+v_{2i}\leq 1$ holds for every $i\in [n]$. It is clear that $T_n$ is a simplicial set. A vector $v\in T_n$ is maximal if and only if  $v_{2i-1}+v_{2i}=1$ holds for every $i\in [n]$. Let $T'_n$ denote the set of maximal elements in $T_n$. 

We define the maps ${\rm fold}_n:T_n\to\{0,1\}^n$ and ${\rm flat}_n:T_n\to\{0,1\}^n$ such that the $i$-th coordinate of ${\rm fold}_n(v)$ is equal to $v_{2i-1}+v_{2i}$ and the $i$-th coordinate of ${\rm flat}_n(v)$ is equal to $v_{2i}$.

We will use the next lemma.

\begin{lemma}\label{foldlem} Let $X$ be a groupspace and $c\in C^n(X)$. Then $c\circ{\rm fold}_n$ and $c\circ{\rm flat}_n$ are both cube preserving on $T_n$.
\end{lemma}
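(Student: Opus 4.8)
The plan is to use the observation recorded just before the statement, namely that a map is cube preserving precisely when its composition with $s_v$ lands in $C^{h(v)}(X)$ for every \emph{maximal} vector $v\in T_n$; this lets me discard the non-maximal vectors entirely. The whole proof then reduces to showing that, for each maximal $v$, the two composite maps ${\rm fold}_n\circ s_v$ and ${\rm flat}_n\circ s_v$ are morphisms in the category $\mathcal{G}$. Once this is established, the presheaf axiom applied to $c\in C^n(X)$ gives $(c\circ{\rm fold}_n)\circ s_v=c\circ({\rm fold}_n\circ s_v)\in C^{h(v)}(X)$ and likewise for ${\rm flat}_n$, finishing both cases at once. (The composites are well defined because $T_n$ is simplicial, so the image of $s_v$ stays inside $T_n$.)

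First I would record the shape of a maximal $v\in T_n$: here $v_{2i-1}+v_{2i}=1$ for every $i\in[n]$, so exactly one of the two coordinates in each pair $\{2i-1,2i\}$ equals $1$; in particular $h(v)=n$ and $s_v$ is a map $\{0,1\}^n\to\{0,1\}^{2n}$. Writing $\supp{v}=\{a_1<a_2<\dots<a_n\}$, the interleaved ordering forces $a_i\in\{2i-1,2i\}$, so that $a_i$ is the $i$-th smallest element of the support; hence $s_v(w)_{a_i}=w_i$, while $s_v(w)_\ell=0$ for $\ell\notin\supp{v}$.

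With this description the two composites can be computed coordinatewise. For ${\rm fold}_n$, the $i$-th coordinate of ${\rm fold}_n(s_v(w))$ is $s_v(w)_{2i-1}+s_v(w)_{2i}$; since exactly the coordinate $a_i$ of that pair lies in the support and carries the value $w_i$ while the other vanishes, the sum equals $w_i$. Thus ${\rm fold}_n\circ s_v=I_n$ is the identity map, which is trivially in $\mathcal{G}$. For ${\rm flat}_n$, the $i$-th coordinate of ${\rm flat}_n(s_v(w))$ is $s_v(w)_{2i}$, which equals $w_i$ when $v_{2i}=1$ (so that $a_i=2i$) and equals $0$ when $v_{2i}=0$. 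Hence ${\rm flat}_n\circ s_v$ has each output coordinate depending on at most one input coordinate, namely $p^n_i$ when $v_{2i}=1$ and a constant otherwise, so the associated function $\gamma$ of Definition \ref{cubmorph} satisfies $\gamma(i)=i$ on its support and $\gamma(i)=0$ elsewhere.

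The main point to watch — and the reason this chapter stresses care with the ordering of coordinates — is exactly the monotonicity clause of Definition \ref{cubmorph}, which is what separates $\mathcal{G}$ from $\mathcal{N}$. For ${\rm flat}_n\circ s_v$ the nonzero values of $\gamma$ are $\gamma(i)=i$, which is strictly increasing, so monotonicity holds; but this works only because $T_n$ was defined so that the two coordinates of each pair are adjacent, forcing the selected support elements to appear in the same order as the pairs. With both composites shown to lie in $\hom_{\mathcal{G}}(\{0,1\}^n,\{0,1\}^n)$, a single appeal to the presheaf axiom completes the argument; I do not expect any serious obstacle beyond bookkeeping the indices so that the monotonicity verification is airtight.
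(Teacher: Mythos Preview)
Your proof is correct and follows essentially the same approach as the paper: both reduce to checking maximal $v\in T'_n$, compute ${\rm fold}_n\circ s_v$ as the identity and ${\rm flat}_n\circ s_v$ as the map $(x_1,\dots,x_n)\mapsto(v_2x_1,\dots,v_{2n}x_n)$, and then invoke the presheaf axiom. Your version spells out the monotonicity verification for $\mathcal{G}$ more explicitly than the paper, which simply asserts that $\phi_v$ is ``clearly'' a morphism in $\mathcal{G}$.
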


\begin{proof} Let $v\in T'_n$. We have from the definitions that ${\rm fold}_n\circ s_v$ is the identity map on $\{0,1\}^n$. Thus $(c\circ{\rm fold}_n)\circ s_v=c$ and the proof of the first claim is complete. To prove the second claim we observe that if $v\in T'_n$ then ${\rm flat}_n\circ s_v$ is equal to $c\circ\phi_v$ where 
$$\phi_v(x_1,x_2,\dots,x_n)=(v_2x_1,v_4x_2,\dots,v_{2i}x_i,\dots,v_{2n}x_n).$$ It is clear that $\phi_v:\{0,1\}^n\to\{0,1\}^n$ is a morphism in $\mathcal{G}$ and so the proof is complete. 
\end{proof}

Let $\alpha_n\in\hom_{\mathcal{G}}(\{0,1\}^n,\{0,1\}^{2n})$ be the map defined by $\alpha_n(x_1,x_2,\dots,x_n):=(1-x_1,x_1,1-x_2,x_2,\dots,1-x_n,x_n)$. We have that the image of $\alpha_n$ is $T'_n$. The next lemma is an easy consequence of lemma \ref{simpglue}.

\begin{lemma}\label{threelem} Let $n\in\mathbb{N}$ and let $f:T_n\to X$ be a cube preserving map to some cubespace $X$. Then $f\circ\alpha_n\in C^n(X)$.
\end{lemma}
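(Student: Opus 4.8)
The plan is to reduce the lemma to a single application of the simplicial gluing lemma (Lemma \ref{simpglue}) followed by the presheaf axiom. The idea is that $f$, being cube preserving on the simplicial set $T_n$, extends to an honest cube on the full discrete cube $\{0,1\}^{2n}$, and then precomposition with the $\mathcal{G}$-morphism $\alpha_n$ produces the desired element of $C^n(X)$.

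Concretely, I would proceed as follows. First, recall that $T_n\subseteq\{0,1\}^{2n}$ is simplicial (as noted in the text) and that $f:T_n\to X$ is cube preserving by hypothesis. Hence Lemma \ref{simpglue}, applied with the ambient cube $\{0,1\}^{2n}$, furnishes a cube $\phi\in C^{2n}(X)$ whose restriction to $T_n$ equals $f$. Second, since $\alpha_n\in\hom_{\mathcal{G}}(\{0,1\}^n,\{0,1\}^{2n})$, the presheaf axiom for the groupspace $X$ gives $\phi\circ\alpha_n\in C^n(X)$. Third, recall that the image of $\alpha_n$ is $T'_n$, the set of maximal elements of $T_n$, which is in particular contained in $T_n$; therefore $\phi$ and $f$ agree everywhere along this image, and so $\phi\circ\alpha_n=f\circ\alpha_n$. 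Combining the last two points yields $f\circ\alpha_n\in C^n(X)$.

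The only point that requires any care — and it is the natural candidate for the ``main obstacle'' — is matching $\phi$ with $f$ along the image of $\alpha_n$, so that the membership $\phi\circ\alpha_n\in C^n(X)$ obtained from the presheaf axiom actually transfers to $f\circ\alpha_n$. This is immediate here because $\alpha_n$ lands inside $T_n$, precisely where the gluing lemma guarantees $\phi|_{T_n}=f$. Thus the proof is genuinely a two-step composition: extend via Lemma \ref{simpglue}, then restrict via the presheaf axiom, with the containment $\mathrm{im}(\alpha_n)=T'_n\subseteq T_n$ ensuring the two maps coincide on the relevant vertices.
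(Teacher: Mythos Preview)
Your proposal is correct and follows exactly the same approach as the paper's own proof: extend $f$ to a cube on $\{0,1\}^{2n}$ via Lemma~\ref{simpglue}, apply the presheaf axiom with $\alpha_n\in\hom_{\mathcal{G}}(\{0,1\}^n,\{0,1\}^{2n})$, and use that $\mathrm{im}(\alpha_n)\subseteq T_n$ to identify the result with $f\circ\alpha_n$. In fact your write-up is slightly more careful, since you correctly place the extended cube in $C^{2n}(X)$ (the paper's proof contains a typo writing $C^n(X)$).
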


\begin{proof} By lemma \ref{simpglue} we obtain a map $c\in C^n(X)$ such that $c|_{T_n}=f$. We have that $f\circ\alpha_n=c\circ\alpha_n$. From the presheaf axiom we obtain that $c\circ\alpha_n\in C^n(X)$.
\end{proof}

\section{Composition of cubes in groupspaces}

\begin{lemma}\label{facegluing} Let  $X$ be a cube space, $n\in\mathbb{N},i\in[n]$ and assume that $c_1,c_2\in C^n(X)$ are cubes with the property that $c_1\circ e^n_{i,1}=c_2\circ e^n_{i,0}$ holds. Let $c_3:\{0,1\}^n\to X$ be the function determined by the property that $c_3\circ e^n_{i,0}=c_1\circ e^n_{i,0}$ and $c_3\circ e^n_{i,1}=c_2\circ e^n_{i,1}$. Then $c_3\in C^n(X)$ holds. 
\end{lemma}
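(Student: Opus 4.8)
The plan is to realize $c_3$ as the image of a single $(n+1)$-dimensional cube under a morphism in $\mathcal{G}$, thereby reducing everything to the presheaf axiom together with the simplicial gluing lemma (Lemma \ref{simpglue}). I would write a vertex of $\{0,1\}^n$ as $(u,a)$, where $a$ is the $i$-th coordinate and $u$ collects the remaining $n-1$ coordinates, and abbreviate the three relevant $(n-1)$-dimensional faces by $A:=c_1\circ e^n_{i,0}$, $B:=c_1\circ e^n_{i,1}=c_2\circ e^n_{i,0}$ and $C:=c_2\circ e^n_{i,1}$. The hypotheses say precisely that $c_1$ interpolates $A\to B$ and $c_2$ interpolates $B\to C$ in the $i$-th direction, while the prescribed $c_3$ interpolates $A\to C$; note that $c_3$ is well defined since the faces $v_i=0$ and $v_i=1$ cover $\{0,1\}^n$, and that $c_3(u,0)=A(u)$, $c_3(u,1)=C(u)$.

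I would now double the $i$-th coordinate. Working in $\{0,1\}^{n+1}$ with the new coordinate inserted at position $i+1$, write a vertex as $(u,a,b)$ and let $S:=\{(u,a,b):a+b\le 1\}$, which is a simplicial set. Define $f:S\to X$ by $f(u,0,0):=B(u)$, $f(u,1,0):=A(u)$ and $f(u,0,1):=C(u)$. The key computation is that $f$ is cube preserving: its maximal elements are $(1^{n-1},1,0)$ and $(1^{n-1},0,1)$, and a direct check of the simplicial embeddings $s_v$ shows that $f\circ s_{(1^{n-1},1,0)}=c_1\circ r_i$ and $f\circ s_{(1^{n-1},0,1)}=c_2$, both of which lie in $C^n(X)$ by the presheaf axiom (using $r_i\in\hom_{\mathcal{G}}(\{0,1\}^n,\{0,1\}^n)$). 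Lemma \ref{simpglue} then produces $c^*\in C^{n+1}(X)$ with $c^*|_S=f$. Finally, let $\alpha\in\hom_{\mathcal{G}}(\{0,1\}^n,\{0,1\}^{n+1})$ be the map sending $(u,v_i)$ to $(u,1-v_i,v_i)$, whose image lies in $S$; then $c^*\circ\alpha\in C^n(X)$ by the presheaf axiom, and evaluating gives $c^*\circ\alpha(u,0)=f(u,1,0)=A(u)$ and $c^*\circ\alpha(u,1)=f(u,0,1)=C(u)$, whence $c^*\circ\alpha=c_3$. This is the single-coordinate analogue of the three-cube construction of Lemma \ref{threelem}.

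The main obstacle is that the tempting construction — taking an $(n+1)$-cube whose two opposite $(i+1)$-faces are literally $c_1$ and $c_2$ — does not work: two arbitrary $n$-cubes sharing a face need not be opposite faces of an $(n+1)$-cube (already in $\mathcal{D}_1(G)$ the slab configuration $A,B,B,C$ fails to be a $2$-cube in general), and moreover that ``top'' copy of $c_2$ would collide with the very vertex one hopes to produce by completion, so the completion axiom cannot be invoked directly. The device that resolves this is to use $B$ as the bottom slab and to recover $c_1$ only in its reflected form $c_1\circ r_i$, so that both $c_1$ and $c_2$ appear as genuine faces of a downward-closed (simplicial) configuration rather than as opposite faces; the recovery of $c_3$ is then pushed entirely onto the $\mathcal{G}$-morphism $\alpha$. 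Care is also needed to place the doubled coordinate at position $i+1$ so that both $\alpha$ and the reflection $r_i$ stay monotonic, i.e. remain in the smaller category $\mathcal{G}$ and not merely in $\mathcal{N}$.
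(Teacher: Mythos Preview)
Your proof is correct and essentially identical to the paper's own argument: the paper also sets $c_1':=c_1\circ r_i$, takes the simplicial set $S=\{v\in\{0,1\}^{n+1}:v_iv_{i+1}=0\}$ (your $a+b\le 1$), defines $f$ on $S$ by $f\circ e^{n+1}_{i+1,0}=c_1'$ and $f\circ e^{n+1}_{i,0}=c_2$, invokes Lemma~\ref{simpglue}, and then composes with the same morphism $\kappa(v_1,\dots,v_n)=(v_1,\dots,v_{i-1},1-v_i,v_i,v_{i+1},\dots,v_n)$ that you call $\alpha$. Your write-up is slightly more explicit in checking the cube-preserving property via the two maximal vectors and in explaining why the naive ``slab'' approach fails, but the underlying construction is the same.
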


\begin{proof} 
Let $r_i$ denote the reflection as defined in chapter \ref{cubmor}.  Let $c_1':=c_1\circ r_i$. Let $S:=\{v:v\in\{0,1\}^{n+1},v_iv_{i+1}=0\}$. We have that $S$ is a simplicial set. Let $f:S\to X$ be the map satisfying that $f\circ e^{n+1}_{i+1,0}=c'_1$ and $f\circ e^{n+1}_{i,0}=c_2$. The properties of $c_1$ and $c_2$ guarantee that $f$ is well defined and unique. Lemma \ref{simpglue} implies that there is $f'\in C^{n+1}(X)$ such that $f'_S=f$. Let $\kappa:\{0,1\}^n\to\{0,1\}^{n+1}$ be the cube morphism defined by 
$$\kappa(v_1,v_2,\dots,v_n)=(v_1,v_2,\dots,v_{i-1},1-v_i,v_i,v_{i+1},\dots,v_n).$$ Then we have that $f'\circ\kappa=c_3$ and thus the proof is complete.
\end{proof}

\begin{definition} If $c_1,c_2,c_3$ satisfy the conditions of lemma \ref{facegluing} then we say that $c_1$ and $c_2$ are $i$-{\bf composable} and $c_3$ is the $i$-{\bf composition} of $c_1$ and $c_2$. We denote this relation by $c_3=c_1\boxplus_i c_2$.
\end{definition}

It is clear that if $c_1,c_2,c_3\in C^n(X)$, $c_1,c_2$ are $i$-composable and $c_2,c_3$ are $i$-composable then the pairs $c_1\boxplus_i c_2,c_3$ and $c_1,c_2\boxplus_i c_3$ are both $i$-composable and
\begin{equation}\label{comp-assoc}
(c_1\boxplus_i c_2)\boxplus_i c_3=c_1\boxplus_i(c_2\boxplus_ic_3).
\end{equation}

\begin{definition} Let $X$ be a groupspace and $n\in\mathbb{N},i\in [n+1]$. Let $c_1,c_2\in C^n(X)$. We say that $c_1\thickapprox_i c_2$ if and only if there is $c_3\in C^{n+1}(X)$ such that $c_1=c_3\circ e^{n+1}_{i,0}$ and $c_2=c_3\circ e^{n+1}_{i,1}$.
\end{definition}

The next lemma is an easy consequence of lemma \ref{facegluing}.

\begin{lemma} Let $X$ be a group space and $n\in\mathbb{N},i\in [n+1]$. Then $\thickapprox_i$ is an equivalence relation on $C^n(X)$.
\end{lemma}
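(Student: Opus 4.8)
The plan is to verify directly that $\thickapprox_i$ satisfies the three defining properties of an equivalence relation. Reflexivity and symmetry will follow quickly from the presheaf axiom applied to two of the basic morphisms introduced in Chapter \ref{cubmor}, while transitivity is exactly the configuration handled by the face-gluing Lemma \ref{facegluing}; this last point carries the real content.

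For reflexivity I would take the projection $\phi:=p_{[n+1]\setminus\{i\}}\in\hom_{\mathcal{G}}(\{0,1\}^{n+1},\{0,1\}^n)$ that deletes the $i$-th coordinate and set $c_3:=c_1\circ\phi$. By the presheaf axiom $c_3\in C^{n+1}(X)$, and since $\phi\circ e^{n+1}_{i,0}=\phi\circ e^{n+1}_{i,1}=\mathrm{id}_{\{0,1\}^n}$ (deleting a coordinate immediately after inserting it), both faces of $c_3$ equal $c_1$, so $c_1\thickapprox_i c_1$. For symmetry, suppose $c_1\thickapprox_i c_2$ is witnessed by $c_3\in C^{n+1}(X)$ with $c_3\circ e^{n+1}_{i,0}=c_1$ and $c_3\circ e^{n+1}_{i,1}=c_2$. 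I would set $c_3':=c_3\circ r_i$, where $r_i$ is the reflection in the $i$-th coordinate; by Lemma \ref{transim} we have $r_i\in\hom_{\mathcal{G}}(\{0,1\}^{n+1},\{0,1\}^{n+1})$, hence $c_3'\in C^{n+1}(X)$. Since $r_i\circ e^{n+1}_{i,0}=e^{n+1}_{i,1}$ and $r_i\circ e^{n+1}_{i,1}=e^{n+1}_{i,0}$, the two faces of $c_3'$ are $c_2$ and $c_1$ respectively, giving $c_2\thickapprox_i c_1$.

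For transitivity, suppose $c_1\thickapprox_i c_2$ and $c_2\thickapprox_i c_3$, witnessed by cubes $a,b\in C^{n+1}(X)$ with $a\circ e^{n+1}_{i,0}=c_1$, $a\circ e^{n+1}_{i,1}=c_2$, $b\circ e^{n+1}_{i,0}=c_2$, and $b\circ e^{n+1}_{i,1}=c_3$. The key observation is that $a\circ e^{n+1}_{i,1}=c_2=b\circ e^{n+1}_{i,0}$, so $a$ and $b$ are $i$-composable in the sense of Lemma \ref{facegluing} (applied with the cube dimension $n+1$ in place of $n$). The lemma then yields $d:=a\boxplus_i b\in C^{n+1}(X)$ satisfying $d\circ e^{n+1}_{i,0}=a\circ e^{n+1}_{i,0}=c_1$ and $d\circ e^{n+1}_{i,1}=b\circ e^{n+1}_{i,1}=c_3$, which witnesses $c_1\thickapprox_i c_3$.

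I expect the only genuine obstacle to be the bookkeeping in transitivity, namely confirming that the composability hypothesis of Lemma \ref{facegluing} holds and correctly identifying the two outer faces of the glued cube $a\boxplus_i b$; reflexivity and symmetry are immediate once the correct auxiliary morphism (a coordinate-deleting projection, respectively the reflection $r_i$) is chosen. A small point I would double-check is that the face maps $e^{n+1}_{i,j}$, the projection, and the reflection all lie in $\mathcal{G}$ rather than merely in $\mathcal{N}$, which is guaranteed by the explicit verifications in Chapter \ref{cubmor}.
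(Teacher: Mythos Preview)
Your proof is correct and follows essentially the same route as the paper: reflexivity via the projection $p_{[n+1]\setminus\{i\}}$, symmetry via the reflection $r_i$, and transitivity via the face-gluing Lemma \ref{facegluing}. The paper's version is terser but the underlying argument is identical.
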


\begin{proof} For reflexivity of $\thickapprox_i$ let $c\in C^n(X)$ and let $c':=c\circ p_{[n+1]\setminus\{i\}}$. 
We have that $c=c'\circ e^{n+1}_{i,j}$ holds for $j\in\{0,1\}$ and so $c\thickapprox_i c$. Symmetry follows by composing a reflection $r_i$ of the $i$-th coordinate with $c_3$ from the definition of $\thickapprox_i$. Transitivity follows directly from lemma \ref{facegluing}.
\end{proof}

Another important consequence of lemma \ref{facegluing} is that composition of cubes "respects" the equivalence relation $\thickapprox_i$. More precisely, the equivalence class of a composition depends only on the equivalence classes of the composed cubes.

\begin{lemma}\label{compwell} Let $X$ be a groupspace, $n\in\mathbb{N}$ and $i\in [n+1],j\in [n]$. Let $c_1,c_2,c_1',c_2'\in C^n(X)$ be such that $c_1$ is $j$-composable with $c_2$, $c_1'$ is $j$-composable with $c_2'$, $c_1\thickapprox_i c_1'$ and $c_2\thickapprox_i c_2'$. then $$c_1\boxplus_j c_2\thickapprox_i c_1'\boxplus_j c_2'.$$
\end{lemma}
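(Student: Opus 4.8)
The plan is to exhibit a single cube $D\in C^{n+1}(X)$ witnessing $C:=c_1\boxplus_j c_2\thickapprox_i c_1'\boxplus_j c_2'=:C'$, that is, a cube with $D\circ e^{n+1}_{i,0}=C$ and $D\circ e^{n+1}_{i,1}=C'$. First I would fix notation: let $j'\in[n+1]$ be the image of the composition coordinate $j$ under the insertion of coordinate $i$, so $j'=j$ if $j<i$ and $j'=j+1$ if $j\geq i$; in either case $j'\neq i$. Recall the witnesses $d_1,d_2\in C^{n+1}(X)$ with $d_1\circ e^{n+1}_{i,0}=c_1$, $d_1\circ e^{n+1}_{i,1}=c_1'$ and $d_2\circ e^{n+1}_{i,0}=c_2$, $d_2\circ e^{n+1}_{i,1}=c_2'$. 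The idea is to build $D$ by assembling $d_1$ and $d_2$ along the direction $j'$ using the simplicial gluing lemma (Lemma \ref{simpglue}), and then pulling the result back along a fold-and-reflect morphism in $\mathcal{G}$, exactly as the map $\kappa$ is used in the proof of Lemma \ref{facegluing} and as $\alpha_n$ is used in the three-cube construction.

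The naive attempt would be to $j'$-compose $d_1$ and $d_2$ directly via Lemma \ref{facegluing}. This fails: the two faces that would have to be glued, namely $P:=d_1\circ e^{n+1}_{j',1}$ and $Q:=d_2\circ e^{n+1}_{j',0}$, are in general distinct $n$-cubes. The hypotheses only give $P\circ e^{n}_{i',0}=c_1\circ e^n_{j,1}=c_2\circ e^n_{j,0}=Q\circ e^n_{i',0}$ and $P\circ e^n_{i',1}=c_1'\circ e^n_{j,1}=c_2'\circ e^n_{j,0}=Q\circ e^n_{i',1}$, where $i'$ denotes the position of $i$ after deleting $j'$; that is, $P$ and $Q$ agree only on their two $i'$-faces. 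For $n=1$ these faces are the two endpoints, so $P=Q$ and a single application of Lemma \ref{facegluing} already produces $D$; the genuine obstruction appears only for $n\geq 2$, where $P$ and $Q$ are merely two different witnesses of the same equivalence between the shared $(n-1)$-cubes $\beta:=c_1\circ e^n_{j,1}=c_2\circ e^n_{j,0}$ and $\beta':=c_1'\circ e^n_{j,1}=c_2'\circ e^n_{j,0}$.

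To circumvent this I would avoid gluing $d_1$ and $d_2$ along a common $n$-face altogether. Instead I would place suitably reflected copies of $d_1$ and $d_2$ as two top-dimensional simplices of a cube-preserving map $f\colon S\to X$ on a simplicial set $S$ sitting inside a cube of dimension $n+2$ (raising the ambient dimension by one more if a dimension count forces it), arranged so that the supports of $d_1$ and $d_2$ overlap only in the simplex corresponding to the shared faces $\beta,\beta'$. Then the sole compatibility conditions on $f$ are precisely the equalities coming from the $j$-composability of $(c_1,c_2)$ and of $(c_1',c_2')$ together with the fact that $d_1,d_2$ witness the $\thickapprox_i$ relations; the mismatch between $P$ and $Q$ is never invoked, since it lives one dimension above the overlap. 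Lemma \ref{simpglue} then extends $f$ to a cube $E$, and I would set $D:=E\circ\kappa$ for the fold-and-reflect $\mathcal{G}$-morphism $\kappa$ that collapses the two ``composition halves'' back onto the single coordinate $j'$. By the presheaf axiom $D\in C^{n+1}(X)$, and a direct check against the defining face-identities $C\circ e^n_{j,0}=c_1\circ e^n_{j,0}$, $C\circ e^n_{j,1}=c_2\circ e^n_{j,1}$ of $\boxplus_j$ (and their primed analogues) shows $D\circ e^{n+1}_{i,0}=C$ and $D\circ e^{n+1}_{i,1}=C'$, which is the desired conclusion $C\thickapprox_i C'$.

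The main obstacle is twofold. First, because $d_1$ and $d_2$ are not directly composable, the whole task is to realize the gluing in a high enough dimension that the only matchings demanded are the low-dimensional ones the hypotheses actually supply; pinning down the exact dimension and the shape of $S$, so that $P$ and $Q$ end up separated while $\beta$ and $\beta'$ remain genuinely shared (two $(n+1)$-dimensional faces meet in dimension $\geq(n+1)+(n+1)-\dim$, so separating them down to dimension $n-1$ constrains the ambient dimension), is the crux. Second — and this is the specifically \emph{groupspace} difficulty, absent in the nilspace setting where transpositions are admissible — every structural map in sight (the embeddings of $d_1,d_2$, the reflections, and above all the fold $\kappa$) must be a morphism in $\mathcal{G}$, i.e. must keep $\gamma_\phi$ monotone on its support. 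This rigidly constrains the relative placement of the coordinates $i$, $j'$, and the two composition halves, so that the remaining work is careful index bookkeeping rather than any further conceptual ingredient.
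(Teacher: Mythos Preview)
Your proposal rests on a mistaken premise. You correctly derive that the faces $P:=d_1\circ e^{n+1}_{j',1}$ and $Q:=d_2\circ e^{n+1}_{j',0}$ satisfy
\[
P\circ e^{n}_{i',0}=Q\circ e^{n}_{i',0}\qquad\text{and}\qquad P\circ e^{n}_{i',1}=Q\circ e^{n}_{i',1},
\]
but you then claim this is only partial agreement. It is not: the images of $e^n_{i',0}$ and $e^n_{i',1}$ are the two hyperplanes $\{v_{i'}=0\}$ and $\{v_{i'}=1\}$, which partition all of $\{0,1\}^n$. Hence $P=Q$ identically, for every $n$, and $d_1,d_2$ are genuinely $j'$-composable. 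There is no obstruction for $n\geq 2$.

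This is exactly what the paper does: it simply observes that $d_1$ and $d_2$ are $j'$-composable and sets $D:=d_1\boxplus_{j'} d_2\in C^{n+1}(X)$ via Lemma~\ref{facegluing}; a direct check (the same face-commutation you already carried out) gives $D\circ e^{n+1}_{i,0}=c_1\boxplus_j c_2$ and $D\circ e^{n+1}_{i,1}=c_1'\boxplus_j c_2'$. So the ``naive attempt'' you dismissed is the complete proof, and your simplicial-gluing workaround in dimension $n+2$ (or higher) is unnecessary. The monotonicity concerns you raise for $\mathcal{G}$ are already handled inside Lemma~\ref{facegluing}, which was proved for groupspaces.
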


\begin{proof} For $k=1,2$ let $d_k:\{0,1\}^{n+1}\to X$ denote the function defined by the property that $c_k=d_k\circ e^{n+1}_{i,0}$ and $c_k'=d_k\circ e^{n+1}_{i,1}$. Since $c_k\thickapprox_i c_k'$ we have that $d_k\in C^{n+1}(X)$ holds for $k=1,2$. Let $j'=j$ if $j<i$ and $j'=j+1$ if $j\geq i$. It is easy the check that $d_1$ and $d_2$ are $j'$ composable and that $c_1\boxplus_j c_2=(d_1\boxplus_{j'} d_2) \circ e^{n+1}_{i,0}$ and $c_1'\boxplus_j c_2'=(d_1\boxplus_{j'} d_2) \circ e^{n+1}_{i,1}$. This completes the proof.
\end{proof}

\begin{definition} Let $X$ be a groupspace, $n\in\mathbb{N}$ and $i\in[n+1]$. Let $C^n(X)/\thickapprox_i$ denote the set of $\thickapprox_i$ equivalence classes of $C^n(X)$. We say that two classes $A,B\in C^n(X)/\thickapprox_i$ are $j$-composable for some $j\in [n]$ if there exist $j$-composable cubes $c_1\in A,c_2\in B$. In this case we define $A\boxplus_j B$ as the $\thickapprox_i$ class of $c_1\boxplus_j c_2$. We have by lemma \ref{compwell} that $A\boxplus_j B$ is well defined. 
\end{definition}

\section{Classification of $k$-step, $k$-ergodic groupsapces}\label{Class-k}

\begin{definition} A groupspace $X$ is called $k$-ergodic if $C^k(X)=X^{\{0,1\}^k}$.
\end{definition}

Note that by the axioms every cubespace is at least $1$-ergodic. In chapter \ref{Class-1} we characterized all $1$-step groupspaces. A similar characterization holds for $k$-step groupspaces if we also assume that $X$ is $k$-ergodic. The main difference is that if $k>1$ then such groupspaces come from commutative groups. This fact is reminiscent of the well known theorem that higher homotopy groups are abelian. Even the proof is similar as we use the Eckmann-Hilton argument at some point of the proof. 

It is a basic result in nilspace theory that $k$-step, $k$-ergodic nilspaces are abelian groups equipped with the $k$-th degree structure denoted by $\mathcal{D}_k$ (see Chapter \ref{highdegabgroups}). In the rest of the chapter we generalize this result to groupspaces in the case of $k\geq 2$. 

\begin{theorem}\label{class-k-step-k-ergodic} Let $k\geq 2$ and let $X$ be a $k$-step, $k$-ergodic groupspace. Then $X$ is isomorphic to $\mathcal{D}_k(A)$ for some abelian group $A$.
\end{theorem}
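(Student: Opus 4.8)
The plan is to equip the underlying set $X$ with an abelian group structure $A=(X,+,e)$ and then identify $X$, as a groupspace, with $\mathcal{D}_k(A)$. Since $X$ is $k$-ergodic, Lemma \ref{down-det} gives $C^i(X)=X^{\{0,1\}^i}$ for every $i\le k$; in particular every map $f:\{0,1\}^{k+1}_*\to X$ automatically satisfies the hypothesis of the completion axiom, so it has a completion, which is unique by the $k$-step assumption. Thus there is a well-defined \emph{completion map} assigning to each $f\in X^{\{0,1\}^{k+1}_*}$ the value $\Phi(f)\in X$ of its unique completion at $1^{k+1}$. Because $\mathcal{D}_k(A)$ is also $k$-step and $k$-ergodic (the remark after Theorem \ref{hdsog}), Lemma \ref{k-step-det} reduces the entire theorem to a single level: it suffices to construct an abelian group structure on $X$ for which $C^{k+1}(X)=\{c:w(c)=0\}=C^{k+1}(\mathcal{D}_k(A))$.

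First I would fix $e\in X$ and define $x+y:=\Phi(f_{x,y})$, where $f_{x,y}$ is the corner taking the value $x$ at the vertex whose only $0$-coordinate is $a$, the value $y$ at the vertex whose only $0$-coordinate is $b$ (for a fixed pair $a\ne b$), and the value $e$ at every other vertex of $\{0,1\}^{k+1}_*$; uniqueness of completion makes $+$ well defined. I would check that $e$ is a two-sided identity and that inverses exist by displaying the relevant cubes explicitly: degenerate cubes (images of $s_v$ or of projections, both morphisms in $\mathcal{G}$) give $x+e=e+x=x$, and solving the completion equation for the single missing entry produces inverses. Associativity I would obtain by gluing: place $x,y,z$ on a suitable simplicial subset of $\{0,1\}^{k+2}$, extend to a cube via Lemma \ref{simpglue}, and read off $(x+y)+z$ and $x+(y+z)$ as two faces of one cube, so that uniqueness forces them equal; equivalently one invokes the associativity (\ref{comp-assoc}) of the face-composition $\boxplus$.

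Commutativity is the conceptual heart of the argument and the step I expect to be hardest. Here I would pass to the face-composition operations of the previous chapter: on a suitable set of $\thickapprox_i$-classes the compositions $\boxplus_j$ and $\boxplus_{j'}$ in two distinct directions share the class of a degenerate cube as a common unit, and the composition machinery (in particular Lemma \ref{compwell}) yields the interchange law relating them. The Eckmann--Hilton argument (Theorem \ref{Eckmann-Hilton}) then forces these two operations to coincide and to be commutative, and transporting this back through the definition of $+$ shows that $A=(X,+,e)$ is abelian. The delicate point is to align the completion-based operation $+$ with the correct face-composition and to verify that the unit is genuinely shared; this bookkeeping of directions and faces is where the argument is most error-prone.

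Finally I would identify the cubes. With $A$ abelian the invariant $w$ of Chapter \ref{highdegabgroups} is available, and the splitting of $w$ along the $j$-th coordinate gives $w(c_1\boxplus_j c_2)=w(c_1)+w(c_2)$, so $w$ is additive under composition, while Lemma \ref{lhd2} (through Lemma \ref{lhd3}) gives $w(c)=0$ for every degenerate cube. The defining corners $f_{x,y}$ complete to cubes with $w=0$ by a one-line sign count, so, reducing an arbitrary cube to degenerate and defining pieces by successive compositions that preserve $w$, I obtain $w(c)=0$ for every $c\in C^{k+1}(X)$. Conversely, if $w(c)=0$ then the unique completion $c'$ of $c|_{\{0,1\}^{k+1}_*}$ already satisfies $w(c')=0$, whence $c'(1^{k+1})=c(1^{k+1})$ and $c=c'\in C^{k+1}(X)$. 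Thus $C^{k+1}(X)=C^{k+1}(\mathcal{D}_k(A))$, and Lemma \ref{k-step-det} shows that the identity map on $X$ is an isomorphism $X\cong\mathcal{D}_k(A)$. The residual technical obstacle in this last paragraph is precisely the reduction of a general cube to degenerate and defining pieces.
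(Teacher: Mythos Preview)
Your proposal follows essentially the same route as the paper: define a binary operation using unique corner completion, obtain commutativity via Eckmann--Hilton applied to two face-composition operations, and then identify $C^{k+1}(X)$ with the $w=0$ cubes so that Lemma \ref{k-step-det} finishes the proof. The paper carries this out with one organisational difference that is worth noting: instead of defining $+$ directly on $X$ by a specific corner $f_{x,y}$, the paper first passes to the quotient $Y:=C^k(X)/\thickapprox_i$ and defines the operation there as $\boxplus_j$, using the single-entry cubes $q_{w,x}$ to produce canonical bijections $\psi_w:X\to Y$ (Corollary \ref{abijection}). This makes the Eckmann--Hilton step immediate, because the two operations being compared are literally $\boxplus_j$ and $\boxplus_l$ on the same set $Y$; in your version you must first align your corner-completion $+$ with one of the $\boxplus_j$'s, which is exactly the ``delicate point'' you flag. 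The paper's choice also dispatches what you call the ``residual technical obstacle'': the reduction of a general $c\in C^k(X)$ to defining pieces is precisely Lemma \ref{kerg-facequiv} and the iterated composition (\ref{corsum}), which decomposes any $c$ as a $\boxplus$-combination of the cubes $q_{v,c(v)}$ and yields the formula $\psi^{-1}(m(c))=\sum_v(-1)^{h(v)}c(v)$ directly.

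One small point to be careful with in your version: your $+$ as written depends on the choice of the two height-$k$ vertices $a\ne b$ in $\{0,1\}^{k+1}$, and you do not argue that the result is independent of this choice before invoking Eckmann--Hilton. In the paper this issue disappears because the sign calculation in Lemma \ref{corvalues} shows all the bijections $\psi_w$ differ only by a global sign, so the transported operation on $X$ is canonical. Apart from this bookkeeping, your outline is correct and matches the paper's argument.
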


Let $i\in[k+1]$ be fixed for the rest of the proof. Let $Y:=C^k(X)/\thickapprox_i$. Observe that since $X$ is $k$-ergodic we have that $C^k(X)=X^{\{0,1\}^k}$. Let us distinguish an element $x_0\in X$ for the rest of the proof. For $x\in X$ and $w\in\{0,1\}^k$ let $q_{w,x}$ denote the function $q_{v,x}:\{0,1\}^k\to X$ with $q_{w,x}(v)=x$ if $v=w$ and $q_{w,x}(v)=x_0$ if $v\neq w$. Note that $q_{v,x}\in C^k(X)$.

\begin{lemma}\label{gencomp} Let $w\in\{0,1\}^{k+1}$ and $c:\{0,1\}^{k+1}\setminus\{w\}\to X$ be an arbitrary function. Then there is $c_2\in C^{k+1}(X)$ extending $c$ to a cube. 
\end{lemma}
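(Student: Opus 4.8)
The plan is to reduce this strong completion statement to the ordinary completion axiom of Definition \ref{maindef}, exploiting $k$-ergodicity together with the symmetry of cubes. The crucial observation is that since $X$ is $k$-ergodic we have $C^k(X)=X^{\{0,1\}^k}$, so every function on a $k$-dimensional discrete cube is automatically a cube. In particular, for any map $g:\{0,1\}^{k+1}_*\to X$ the hypothesis of the completion axiom at level $n=k+1$, namely that $g\circ e^{k+1}_{i,0}\in C^k(X)$ for every $i\in[k+1]$, holds vacuously. Thus the completion axiom completes an \emph{arbitrary} partial function defined on $\{0,1\}^{k+1}_*$; the only remaining issue is that the axiom removes the distinguished vertex $1^{k+1}$, whereas here we must handle an arbitrary missing vertex $w$.

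First I would move $w$ to $1^{k+1}$. By Lemma \ref{transim} the reflection group $R_{k+1}$ acts transitively on $\{0,1\}^{k+1}$, so there is a reflection $\phi\in R_{k+1}$ with $\phi(w)=1^{k+1}$; concretely one may take $\phi=\phi_{\bar w}$ where $\bar w$ is the coordinatewise complement of $w$, and this $\phi$ is an involution. Since $\phi$ is an involution with $\phi(w)=1^{k+1}$, it maps $\{0,1\}^{k+1}_*=\{0,1\}^{k+1}\setminus\{1^{k+1}\}$ bijectively onto $\{0,1\}^{k+1}\setminus\{w\}$, the domain of $c$. I therefore define $f:=c\circ\phi$ on $\{0,1\}^{k+1}_*$, an arbitrary function on $\{0,1\}^{k+1}_*$.

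Next I would apply the completion axiom to $f$. As noted, its hypotheses are automatic by $k$-ergodicity, so there is $\hat c\in C^{k+1}(X)$ whose restriction to $\{0,1\}^{k+1}_*$ equals $f$. Finally I set $c_2:=\hat c\circ\phi$. Because $\phi$ is a morphism in $\mathcal{G}$, the presheaf axiom gives $c_2\in C^{k+1}(X)$. It remains to check that $c_2$ extends $c$: for $u\neq w$ we have $\phi(u)\in\{0,1\}^{k+1}_*$, whence $c_2(u)=\hat c(\phi(u))=f(\phi(u))=c(\phi(\phi(u)))=c(u)$, using that $\phi$ is an involution.

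There is no genuine obstacle here; the content of the lemma is essentially the reduction itself. The one point requiring care is to ensure the symmetry used lies in $\mathcal{G}$ rather than merely in $\mathcal{N}$ (so that the presheaf axiom for groupspaces applies), which is exactly what Lemma \ref{transim} provides, since every element of $R_{k+1}$ is a $\mathcal{G}$-morphism. It is worth emphasizing that $k$-ergodicity is essential: without it the completion axiom would impose nontrivial conditions on the faces $g\circ e^{k+1}_{i,0}$ that an arbitrary partial function need not satisfy.
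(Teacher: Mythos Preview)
Your proof is correct and follows essentially the same approach as the paper: both use Lemma~\ref{transim} to find an involutive reflection in $\mathcal{G}$ sending $w$ to $1^{k+1}$, transport $c$ along it, invoke the completion axiom (whose face hypotheses are trivially satisfied by $k$-ergodicity), and then transport back via the presheaf axiom. Your write-up is in fact slightly more explicit than the paper's in naming the involution as $\phi_{\bar w}$ and in verifying the extension property pointwise at the end.
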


\begin{proof} It follows from lemma \ref{transim} that there is a map $\gamma:\{0,1\}^{k+1}\to\{0,1\}^{k+1}$ generated by reflections such that $\gamma(w)=(1,1,1,\dots,1)$, $\gamma^{-1}=\gamma$ and $\gamma\in\hom_{\mathcal{G}}(\{0,1\}^{k+1},\{0,1\}^{k+1})$. Let $\gamma'$ be the restriction of $\gamma$ to $\{0,1\}^{k+1}_*$. Let $c'=c\circ\gamma'$. The $k$ ergodicity of $X$ implies that $c'$ satisfies the conditions for the completion axiom and so there is an extension $c_2'\in C^{k+1}(X)$. The $c_2:=c_2'\circ\gamma$ satisfies the requirement of the lemma.
\end{proof}

\begin{lemma}\label{eqrep-gen} For every class $A\in Y$, $w\in\{0,1\}^k$ and function $c:\{0,1\}^k\to X$ there is $c_2\in A$ such that $c_2$ restricted to $\{0,1\}^k\setminus\{w\}$ is equal to $c$.
\end{lemma}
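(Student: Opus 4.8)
The plan is to realise the desired representative $c_2$ as one face of a single $(k+1)$-dimensional cube whose opposite face is a fixed representative of $A$, letting the free completion from Lemma \ref{gencomp} absorb the one vertex where $c_2$ is permitted to differ from $c$.

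First I would pick an arbitrary representative $c_1\in A$ (the class is nonempty by definition). Set $\hat w:=e^{k+1}_{i,1}(w)\in\{0,1\}^{k+1}$, the copy of $w$ lying on the face where the $i$-th coordinate equals $1$. I would then build a partial function $g:\{0,1\}^{k+1}\setminus\{\hat w\}\to X$ by prescribing it separately on the two opposite faces. On the lower face, set $g(e^{k+1}_{i,0}(u)):=c_1(u)$ for all $u\in\{0,1\}^k$; on the upper face, set $g(e^{k+1}_{i,1}(u)):=c(u)$ for all $u\in\{0,1\}^k\setminus\{w\}$. Since the faces $\{v:v_i=0\}$ and $\{v:v_i=1\}$ are disjoint and their union together with $\hat w$ exhausts $\{0,1\}^{k+1}$, the function $g$ is well defined on $\{0,1\}^{k+1}\setminus\{\hat w\}$.

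Next I would apply Lemma \ref{gencomp} with distinguished vertex $\hat w$ to obtain a cube $c_3\in C^{k+1}(X)$ extending $g$. By construction $c_3\circ e^{k+1}_{i,0}=c_1$, and setting $c_2:=c_3\circ e^{k+1}_{i,1}$ (which lies in $C^k(X)$ by the presheaf axiom applied to the face embedding $e^{k+1}_{i,1}$, and in any case by $k$-ergodicity) the cube $c_3$ witnesses $c_1\thickapprox_i c_2$. Because $\thickapprox_i$ is an equivalence relation and $c_1\in A$, this forces $c_2\in A$. Finally, for every $u\in\{0,1\}^k\setminus\{w\}$ we have $c_2(u)=c_3(e^{k+1}_{i,1}(u))=g(e^{k+1}_{i,1}(u))=c(u)$, so the restriction of $c_2$ to $\{0,1\}^k\setminus\{w\}$ equals $c$, as required.

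The argument is essentially bookkeeping once Lemma \ref{gencomp} is available; the only point requiring care is the index matching, namely that the single vertex left unprescribed by $g$ is exactly $\hat w=e^{k+1}_{i,1}(w)$, so that the completion freedom granted by Lemma \ref{gencomp} is spent precisely at the coordinate $w$ on the upper face and nowhere else. I do not anticipate a genuine obstacle beyond verifying this alignment.
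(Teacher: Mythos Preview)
Your proof is correct and follows essentially the same approach as the paper: pick a representative of $A$, place it on the $i{=}0$ face and $c$ on the $i{=}1$ face of a $(k{+}1)$-cube, leave the single vertex $e^{k+1}_{i,1}(w)$ unspecified, and invoke Lemma \ref{gencomp} to complete. The only differences are notational.
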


\begin{proof} Let $c_3\in A$ be arbitrary. Let $w'=e^{k+1}_{i,1}(w)$. Let $c_4:\{0,1\}^{k+1}\to X$ be the function defined by $c_4\circ e^{k+1}_{i,0}=c_3$ and $c_4\circ e^{k+1}_{i,1}=c$. By lemma \ref{gencomp} we have that there exists $c_5\in C^{k+1}(X)$ such that it is equal to $c_4$ on the set $\{0,1\}^{k+1}\setminus\{w'\}$. The choice $c_2:=c_5\circ e^{k+1}_{i,1}$ satisfies the requirement of the lemma.
\end{proof}

As a corollary we obtain the next lemma.

\begin{lemma}\label{eqrep} For every class $A\in Y$ and $w\in\{0,1\}^k$ there is a unique $x\in X$ such that $q_{w,x}\in A$.
\end{lemma}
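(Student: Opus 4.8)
For existence I plan to read the statement straight off Lemma \ref{eqrep-gen}. Applying that lemma to the class $A$, the vertex $w$, and the constant function $c\equiv x_0$ produces a cube $c_2\in A$ with $c_2(v)=x_0$ for every $v\neq w$; setting $x:=c_2(w)$ this says precisely $c_2=q_{w,x}\in A$. So the only real content is uniqueness, which I intend to extract from the $k$-step property together with the presheaf axiom and the transitivity of reflections.

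For uniqueness, suppose $q_{w,x}\in A$ and $q_{w,x'}\in A$, so that $q_{w,x}\thickapprox_i q_{w,x'}$. Unwinding the definition of $\thickapprox_i$ I would fix $c_3\in C^{k+1}(X)$ with $c_3\circ e^{k+1}_{i,0}=q_{w,x}$ and $c_3\circ e^{k+1}_{i,1}=q_{w,x'}$. Writing $u_0:=e^{k+1}_{i,0}(w)$ and $u_1:=e^{k+1}_{i,1}(w)$, and using that the two faces $\{u:u_i=0\}$ and $\{u:u_i=1\}$ cover $\{0,1\}^{k+1}$, this determines $c_3$ completely: $c_3(u_0)=x$, $c_3(u_1)=x'$, and $c_3(u)=x_0$ at every other vertex. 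The crucial move is to manufacture a second cube agreeing with $c_3$ off the single vertex $u_1$. For this I would take $g:=q_{w,x}\circ p_{[k+1]\setminus\{i\}}$, the pullback of $q_{w,x}$ along the projection deleting coordinate $i$. Since $X$ is $k$-ergodic we have $q_{w,x}\in C^k(X)$, and $p_{[k+1]\setminus\{i\}}\in\hom_{\mathcal{G}}(\{0,1\}^{k+1},\{0,1\}^{k})$, so the presheaf axiom gives $g\in C^{k+1}(X)$. By construction $g(u)=x$ exactly when $u$ agrees with $w$ off coordinate $i$, i.e. when $u\in\{u_0,u_1\}$, and $g(u)=x_0$ otherwise. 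Hence $g$ and $c_3$ coincide at $u_0$ (both equal $x$) and at every vertex other than $u_0,u_1$ (both equal $x_0$), so they agree on all of $\{0,1\}^{k+1}\setminus\{u_1\}$, differing only at $u_1$, where $g(u_1)=x$ and $c_3(u_1)=x'$.

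It then remains to invoke uniqueness of completion to force $g=c_3$. Because the $k$-step hypothesis is phrased as unique completion at the vertex $1^{k+1}$, I would first transport the situation there: by Lemma \ref{transim} I would choose a product of reflections $\gamma\in R_{k+1}\subset\hom_{\mathcal{G}}(\{0,1\}^{k+1},\{0,1\}^{k+1})$ with $\gamma(u_1)=1^{k+1}$. Then $g\circ\gamma$ and $c_3\circ\gamma$ both lie in $C^{k+1}(X)$ by the presheaf axiom, and since $\gamma$ maps $\{0,1\}^{k+1}\setminus\{u_1\}$ bijectively onto $\{0,1\}^{k+1}_*$, they agree on all of $\{0,1\}^{k+1}_*$. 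The $k$-step property then yields $g\circ\gamma=c_3\circ\gamma$, hence $g=c_3$, and evaluating at $u_1$ gives $x=x'$. I expect the entire difficulty to be concentrated in spotting the auxiliary cube $g$: once one notices that pulling $q_{w,x}$ back along the coordinate-deleting projection produces a cube that shares the corner of $c_3$ but differs from it only at $u_1$, the $k$-step property finishes the argument. Everything else — existence, the reduction to the vertex $1^{k+1}$, and the closure properties used — should be routine bookkeeping with the presheaf axiom, $k$-ergodicity, and reflections.
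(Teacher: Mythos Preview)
Your proof is correct and follows essentially the same route as the paper. Your auxiliary cube $g=q_{w,x}\circ p_{[k+1]\setminus\{i\}}$ is exactly the paper's $c_1$ (the cube witnessing $q_{w,x_1}\thickapprox_i q_{w,x_1}$), and the reflection step you spell out via Lemma~\ref{transim} is left implicit in the paper's one-line invocation of the $k$-step property.
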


\begin{proof} The existence of $x$ follows directly from lemma \ref{eqrep-gen}. Assume by contradiction that there exist  $x_1,x_2\in X$ with $x_1\neq x_2$ such that $q_{w,x_1}\thickapprox_i q_{w,x_2}$.  Let $c_1\in C^{k+1}(X)$ be such that $q_{w,x_1}=c_1\circ e^{k+1}_{i,0}=c_1\circ e^{k+1}_{i,1}$ and let  $c_2\in C^{k+1}(X)$ be such that $q_{w,x_1}=c_2\circ e^{k+1}_{i,0}$ and $q_{w,x_2}=c_2\circ e^{k+1}_{i,1}$. We have that $c_1(v)=c_2(v)$ holds for every $v\neq e^{k+1}_{i,1}(w)$. Since $X$ is $k$-step this implies that $c_1=c_2$ and thus $x_1=x_2$.
\end{proof}

\begin{corollary}\label{abijection} For every $w\in C^k(X)$ there is a unique bijection $\psi_w:X\to Y$ such that $q_{w,x}\in\psi_w(x)$.
\end{corollary}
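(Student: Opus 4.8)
The plan is to exhibit $\psi_w$ by the only natural formula available: send each $x\in X$ to the $\thickapprox_i$-equivalence class of the function $q_{w,x}$. Since $q_{w,x}\in C^k(X)$ (as noted immediately after its definition), its class is a genuine element of $Y=C^k(X)/\thickapprox_i$, so $\psi_w(x):=[q_{w,x}]$ is well defined, and the required property $q_{w,x}\in\psi_w(x)$ then holds by construction. The substantive content is already packed into Lemma \ref{eqrep}, so this corollary amounts to repackaging that lemma as a statement about a single map.

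To see that $\psi_w$ is a bijection I would read off the two halves of Lemma \ref{eqrep} separately. Surjectivity is the existence clause: given any class $A\in Y$ there is $x\in X$ with $q_{w,x}\in A$, hence $A=[q_{w,x}]=\psi_w(x)$. Injectivity is the uniqueness clause: if $\psi_w(x_1)=\psi_w(x_2)=:A$, then both $q_{w,x_1}$ and $q_{w,x_2}$ lie in the single class $A$, and since Lemma \ref{eqrep} asserts that $A$ contains $q_{w,x}$ for a \emph{unique} $x$, we conclude $x_1=x_2$.

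Finally, for uniqueness of the bijection itself, I would suppose $\psi_w'$ is any map $X\to Y$ satisfying $q_{w,x}\in\psi_w'(x)$ for all $x$. Then for each $x$ both $\psi_w(x)$ and $\psi_w'(x)$ are $\thickapprox_i$-classes containing $q_{w,x}$; because $\thickapprox_i$ is an equivalence relation, each element lies in exactly one class, so these two classes coincide and $\psi_w'=\psi_w$. I do not expect any genuine obstacle: every nontrivial step is already carried out in Lemma \ref{eqrep}, and this statement merely records that the position-$w$ correspondence between $X$ and $Y$ is a bijection, which is what will later let us transport the algebraic (abelian group) structure from $Y$ back onto $X$.
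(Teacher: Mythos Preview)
Your proposal is correct and matches the paper's approach: the paper states this corollary with no proof, treating it as immediate from Lemma~\ref{eqrep}, and you have correctly unpacked exactly that inference (define $\psi_w(x)$ as the $\thickapprox_i$-class of $q_{w,x}$, then read surjectivity and injectivity off the existence and uniqueness clauses of Lemma~\ref{eqrep}).
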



\begin{lemma}\label{gencompose} For every pair $A,B\in Y$, $c\in A$ and $j\in [k]$ there is $c_2\in B$ such that $c$ is $j$-composable with $c_2$.
\end{lemma}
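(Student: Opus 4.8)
The plan is to realize $c_2$ as the $i=1$ face of a single $(k+1)$-cube whose $i=0$ face is an arbitrary representative of $B$, exploiting that $k$-ergodicity makes completion at the top vertex completely unconstrained. First I would fix an arbitrary representative $c_1\in B$, which exists since $B$ is a nonempty $\thickapprox_i$-class. Recall from Lemma \ref{gencomp} that, because $X$ is $k$-ergodic and $k$-step, any function defined on $\{0,1\}^{k+1}\setminus\{1^{k+1}\}=\{0,1\}^{k+1}_*$ extends to a cube in $C^{k+1}(X)$ (this is the case $w=1^{k+1}$ of that lemma). So it suffices to prescribe a function $g$ on $\{0,1\}^{k+1}_*$ whose completion $F$ satisfies both $F\circ e^{k+1}_{i,0}=c_1$ and $(F\circ e^{k+1}_{i,1})\circ e^k_{j,0}=c\circ e^k_{j,1}$.

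Next I would write $j'=j$ if $j<i$ and $j'=j+1$ if $j\geq i$, so that the $j$-th coordinate of the $i=1$ face sits in the $j'$-th coordinate of $\{0,1\}^{k+1}$ (the same reindexing as in Lemma \ref{compwell}). I would then define $g$ on the three disjoint pieces of $\{0,1\}^{k+1}_*$: on $\{v:v_i=0\}$ set $g\circ e^{k+1}_{i,0}=c_1$; on $\{v:v_i=1,\ v_{j'}=0\}$ set $g$ so that $g\circ e^{k+1}_{i,1}\circ e^k_{j,0}=c\circ e^k_{j,1}$ (these points are exactly the image of the injection $e^{k+1}_{i,1}\circ e^k_{j,0}$, a bijection onto that face); and on the remaining points $\{v:v_i=1,\ v_{j'}=1\}\setminus\{1^{k+1}\}$ choose $g$ arbitrarily. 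A quick count shows these three sets are pairwise disjoint and cover $\{0,1\}^{k+1}_*$, so $g$ is well defined. Completing $g$ by Lemma \ref{gencomp} yields $F\in C^{k+1}(X)$, and I would set $c_2:=F\circ e^{k+1}_{i,1}\in C^k(X)$, which lies in $C^k(X)$ by the presheaf axiom. Then $F$ witnesses $c_1\thickapprox_i c_2$, so $c_2\in B$, while the second prescription gives $c_2\circ e^k_{j,0}=c\circ e^k_{j,1}$, i.e. $c$ is $j$-composable with $c_2$, as required.

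The only point requiring care — and the one that makes the argument work at all — is that the top vertex $1^{k+1}$ must not be forced by either prescription: the $i=0$ face lies in $\{v:v_i=0\}$ and the prescribed face of the $i=1$ side lies in $\{v:v_{j'}=0\}$, whereas $1^{k+1}$ has $v_i=v_{j'}=1$. Thus the single vertex left free for completion is exactly the corner unconstrained by the data, which is precisely what lets Lemma \ref{gencomp} finish the job. I expect this disjointness-and-index verification to be the main obstacle, though it is routine; everything else follows immediately from $k$-ergodicity together with the definitions of $\thickapprox_i$ and of $j$-composability.
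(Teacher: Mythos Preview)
Your proof is correct and follows essentially the same route as the paper: build a single $(k+1)$-cube via Lemma~\ref{gencomp} whose $i{=}0$ face lies in $B$ and whose $i{=}1$ face has the correct $j{=}0$ face. The paper merely packages this construction inside Lemma~\ref{eqrep-gen}, applying it with $w=1^k$ and the reflected cube $c\circ r_j$ (so that the prescribed values on the $i{=}1$ face already satisfy $c'\circ e^k_{j,0}=c\circ e^k_{j,1}$), whereas you unfold the same $(k+1)$-cube completion by hand and leave the irrelevant vertices arbitrary.
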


\begin{proof} Let $c'=c\circ r_j$ where $r_j$ is the reflection defined in chapter \ref{cubmor}. We have that $c$ and $c'$ are $j$-composable. By lemme \ref{eqrep-gen} there is $c_2\in B$ such that $c_2=c'$ holds on the set $\{0,1\}^k\setminus\{1^k\}$. It is clear that $c_2$ satisfies the requirement of the lemma.
\end{proof}

As an immediate corollary we obtain the following.

\begin{corollary}\label{allcomposable} Every pair $A,B\in Y$ is $j$-composable for every $j\in [k]$.
\end{corollary}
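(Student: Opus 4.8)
The plan is to obtain this statement as a direct unpacking of Lemma \ref{gencompose} together with the definition of $j$-composability for $\thickapprox_i$-classes. Recall that, by definition, two classes $A,B\in Y=C^k(X)/\thickapprox_i$ are $j$-composable (for $j\in[k]$) precisely when there exist cubes $c_1\in A$ and $c_2\in B$ that are $j$-composable in the sense of Lemma \ref{facegluing}; note that this is an \emph{existential} condition, so it suffices to exhibit one suitable pair of representatives rather than to check all of them.

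First I would fix an arbitrary pair of classes $A,B\in Y$ and an index $j\in[k]$, and choose any representative $c\in A$. Such a representative exists because $A$ is a $\thickapprox_i$-equivalence class, and hence nonempty (indeed $\thickapprox_i$ was shown to be reflexive, so every cube lies in its own class). Next I would apply Lemma \ref{gencompose} to this $c$, the target class $B$, and the index $j$; this produces a cube $c_2\in B$ with the property that $c$ and $c_2$ are $j$-composable as cubes.

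The pair $(c,c_2)$ then witnesses exactly the defining condition for $A$ and $B$ to be $j$-composable, so the conclusion follows. Since the choice of $A$, $B$, and $j$ was arbitrary, the statement holds for every pair of classes and every $j\in[k]$. I do not anticipate any real obstacle: all the substantive work—namely producing a representative of $B$ that glues to a prescribed representative of $A$ along the $j$-th direction—has already been carried out in Lemma \ref{gencompose}, which in turn rests on the face-gluing construction of Lemma \ref{facegluing}. The only point deserving a moment of care is the observation, emphasized above, that composability of classes is defined existentially, so a single well-chosen pair of representatives is enough and no compatibility over all representatives needs to be established.
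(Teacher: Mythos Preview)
Your proposal is correct and follows exactly the paper's approach: the paper states this corollary as an immediate consequence of Lemma \ref{gencompose}, and your argument simply unpacks that implication by choosing a representative $c\in A$ and invoking the lemma to produce a $j$-composable $c_2\in B$.
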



\begin{lemma}\label{refsym} Let $j\in[k]$ be arbitrary and assume that $c\in C^k(X)$ is reflection symmetric in the sense that $c\circ r_j=c$ holds. Then $c$ is $\thickapprox_i$ equivalent with the constant $x_0$ cube in $C^k(X)$.
\end{lemma}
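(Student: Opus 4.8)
The plan is to exploit the fact that a reflection-symmetric cube is \emph{degenerate}, and then to realise the required $(k+1)$-cube as a morphism-pullback of an honest $k$-cube, so that no genuine completion argument is needed at all.

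First I would unpack the hypothesis. Since $c\circ r_j=c$ and $r_j$ merely flips the $j$-th coordinate, the value $c(v)$ is unchanged when $v_j$ is switched; hence $c$ does not depend on its $j$-th coordinate at all, i.e.\ $c$ is constant along the direction $j$. Next I would write down the obvious candidate $(k+1)$-cube: let $c_3\colon\{0,1\}^{k+1}\to X$ carry $c$ on the bottom $i$-face and the constant $x_0$ on the top $i$-face, that is $c_3(v)=c(p_{[k+1]\setminus\{i\}}(v))$ when $v_i=0$ and $c_3(v)=x_0$ when $v_i=1$. Let $d\in[k+1]$ be the coordinate of $\{0,1\}^{k+1}$ into which direction $j$ is carried by the inclusions $e^{k+1}_{i,0},e^{k+1}_{i,1}$ (so $d=j$ if $j<i$ and $d=j+1$ if $j\ge i$; in particular $d\ne i$). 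Because flipping $v_d$ preserves $v_i$, it keeps $v$ on the same $i$-face, and on each face $c_3$ is invariant under this flip: on the top face $c_3\equiv x_0$, while on the bottom face flipping $v_d$ is exactly flipping the $j$-th coordinate of $c$, under which $c$ is invariant by the previous sentence. Thus $c_3$ does not depend on its $d$-th coordinate, so $c_3=g\circ p_{[k+1]\setminus\{d\}}$ for a unique map $g\colon\{0,1\}^{k}\to X$.

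Now I would invoke the two load-bearing facts. Since $X$ is $k$-ergodic we have $C^{k}(X)=X^{\{0,1\}^{k}}$, so $g\in C^{k}(X)$ automatically; and since $p_{[k+1]\setminus\{d\}}\in\hom_{\mathcal{G}}(\{0,1\}^{k+1},\{0,1\}^{k})$ is a projection, the presheaf axiom yields $c_3=g\circ p_{[k+1]\setminus\{d\}}\in C^{k+1}(X)$. By construction $c_3\circ e^{k+1}_{i,0}=c$ and $c_3\circ e^{k+1}_{i,1}$ is the constant $x_0$ cube, which is precisely the assertion that $c$ is $\thickapprox_i$ equivalent to the constant $x_0$ cube.

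I expect the only real subtlety to be the bookkeeping in the factorisation step: one must check that reflection-invariance of $c$ in direction $j$ genuinely forces degeneracy of $c_3$ in the $(k+1)$-cube direction $d$ across \emph{both} $i$-faces simultaneously, so that $c_3$ factors through the projection omitting $d$. It is worth stressing that this degeneracy is essential rather than incidental: for a general $c$ the cube $c_3$ defined above is not independent of any coordinate and hence need not lie in $C^{k+1}(X)$, consistent with the lemma failing without the symmetry hypothesis. Note also that the argument uses only $k$-ergodicity (to know $g$ is a cube) and the presheaf axiom for $p_{[k+1]\setminus\{d\}}$; the $k$-step property and the composition machinery $\boxplus_j$ are not needed here.
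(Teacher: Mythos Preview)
Your proof is correct and follows essentially the same approach as the paper: both build the witnessing $(k+1)$-cube as the pullback of a $k$-cube (an automatic cube by $k$-ergodicity) along the projection $p_{[k+1]\setminus\{d\}}$, with $d$ equal to the paper's $j'$. The only cosmetic differences are that the paper places $c$ on the $v_i=1$ face and $x_0$ on the $v_i=0$ face (yielding the symmetric statement $x_0\thickapprox_i c$), and that the paper writes down the factoring $k$-cube $c_3$ explicitly rather than deducing its existence from the degeneracy argument you give.
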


\begin{proof} Let $c_2:\{0,1\}^{k+1}\to X$ be the function defined by the property that $c_2\circ e^{k+1}_{i,1}=c$ and $c_2(v)=x_0$ if $v_i=0$. It is enough to show that $c_2\in C^{k+1}(X)$. Let $c_3:\{0,1\}^k\to X$ be defined by the property that $c_3\circ e^k_{j,1}=c\circ e^k_{j,1}$ and $c_3(v)=x_0$ if $v_j=0$. We have by $k$-ergodicity that $c_3\in C^k(X)$. Let $j'$ be such that $j'=j$ if $j<i$ and $j'=j+1$ if $j\geq i$. Let $p=p_{\{0,1\}^{k+1}\setminus\{j'\}}$ be the projection from $\{0,1\}^{k+1}$ to $\{0,1\}^k$ introduced in chapter \ref{cubmor}. We have that $c_3\circ p=c_2$. Thus by the presheaf axiom we obtain that $c_2\in C^{k+1}(X)$ finishing the proof. 
\end{proof}

Let $E$ denote the $\thickapprox_i$ equivalence class of the constant $x_0$ function in $Y$. 

\begin{lemma}\label{idinv} For every $j\in [k]$ and $c\in C^k(X)$ we have that $c\boxplus_j (c\circ r_j)\in E$. Furthermore if $c\in A\in Y$ then $A\boxplus_j E=A=E\boxplus_j A$.
\end{lemma}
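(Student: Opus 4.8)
The plan is to prove both statements by the same device: exhibit a cube whose two opposite $j$-faces coincide, and collapse it to $E$ via Lemma \ref{refsym}.

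For the first claim I would first verify that $c$ and $c\circ r_j$ are $j$-composable. Because $r_j$ only flips the $j$-th coordinate we have $r_j\circ e^k_{j,0}=e^k_{j,1}$ and $r_j\circ e^k_{j,1}=e^k_{j,0}$, so $(c\circ r_j)\circ e^k_{j,0}=c\circ e^k_{j,1}$, which is exactly the composability condition. Setting $c_3:=c\boxplus_j(c\circ r_j)$, the defining property of the composition gives $c_3\circ e^k_{j,0}=c\circ e^k_{j,0}$ and $c_3\circ e^k_{j,1}=(c\circ r_j)\circ e^k_{j,1}=c\circ e^k_{j,0}$. Hence the two opposite $j$-faces of $c_3$ agree, i.e. $c_3\circ r_j=c_3$, and Lemma \ref{refsym} yields $c_3\in E$.

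For the second claim I would use that, by the definition of $\boxplus_j$ on classes together with Lemma \ref{compwell}, the product $A\boxplus_j E$ can be computed from any pair of $j$-composable representatives, so it suffices to choose convenient ones. Given $c\in A$, I would build a representative of $E$ that is constant in the $j$-direction: set $c_0:=(c\circ e^k_{j,1})\circ p_{[k]\setminus\{j\}}$. Since $c\circ e^k_{j,1}\in C^{k-1}(X)$ and $p_{[k]\setminus\{j\}}\in\hom_{\mathcal{G}}(\{0,1\}^k,\{0,1\}^{k-1})$, the presheaf axiom gives $c_0\in C^k(X)$; as $c_0$ ignores its $j$-th coordinate we have $c_0\circ r_j=c_0$, so $c_0\in E$ by Lemma \ref{refsym}. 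Using $p_{[k]\setminus\{j\}}\circ e^k_{j,0}=\mathrm{id}$ one checks $c_0\circ e^k_{j,0}=c\circ e^k_{j,1}$, so $c$ and $c_0$ are $j$-composable, and $c\boxplus_j c_0$ has $j$-faces $c\circ e^k_{j,0}$ and $c_0\circ e^k_{j,1}=c\circ e^k_{j,1}$, the same two faces as $c$. Since a function on $\{0,1\}^k$ is determined by its restrictions to $\{v:v_j=0\}$ and $\{v:v_j=1\}$, this forces $c\boxplus_j c_0=c$, so $A\boxplus_j E=A$. The identity $E\boxplus_j A=A$ is symmetric, using instead the representative $(c\circ e^k_{j,0})\circ p_{[k]\setminus\{j\}}$ of $E$ composed on the left.

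I do not expect a genuine obstacle here; the work is bookkeeping with the face embeddings $e^k_{j,\cdot}$, the reflection $r_j$, and the projection $p_{[k]\setminus\{j\}}$. The one point that must be handled cleanly, and which underlies both parts, is the observation that a cube is completely determined by its two opposite $j$-faces, so that matching faces forces equality of cubes; the only place a slip could occur is in the index shift when inserting or deleting the $j$-th coordinate, and in confirming $c_0\in C^k(X)$ through the projection morphism.
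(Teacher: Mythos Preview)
Your proposal is correct and follows essentially the same route as the paper: for the first claim the paper also notes that $c\boxplus_j(c\circ r_j)$ is $r_j$-invariant and invokes Lemma~\ref{refsym}, and for the second it constructs the very same cube you call $c_0$ (described there as the $c_2$ with $c_2\circ e^k_{j,0}=c_2\circ e^k_{j,1}=c\circ e^k_{j,1}$). Your verification that $c_0\in C^k(X)$ via the presheaf axiom is more than is strictly needed here, since in this chapter $X$ is assumed $k$-ergodic and hence every map $\{0,1\}^k\to X$ is a $k$-cube, but it does no harm.
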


\begin{proof} Since $c\boxplus_j (c\circ r_j)$ is $r_j$ invariant we have the first claim of the lemma by lemma \ref{refsym}. To see that $A\boxplus_j E=A$ let $c_2:\{0,1\}^k\to X$ be the function with the property that $c_2\circ e^k_{j,0}=c_2\circ e^k_{j,1}=c\circ e^k_{j,1}$. We have that $c\boxplus_j c_2=c$ and by lemma \ref{refsym} that $c_2\in E$. The equation $A=E\boxplus_j A$ follows in a similar way. 
\end{proof}

\begin{lemma}\label{grstructure} Let $j\in[k]$. Then the set $Y$ with the operation $\boxplus_j$ is a group with identity element $E$.
\end{lemma}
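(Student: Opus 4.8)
The plan is to check the four group axioms for $(Y,\boxplus_j)$ one at a time, feeding in the lemmas already assembled in this chapter. First I would observe that $\boxplus_j$ is a totally defined binary operation on $Y$: by Corollary \ref{allcomposable} every pair $A,B\in Y$ is $j$-composable, so $A\boxplus_j B$ is defined, and by Lemma \ref{compwell} (via the definition following it) the resulting class is independent of the choice of $j$-composable representatives. The identity is $E$: the second assertion of Lemma \ref{idinv} gives $A\boxplus_j E=A=E\boxplus_j A$ for every $A\in Y$, so $E$ is a two-sided identity.

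The step I expect to be the main obstacle is associativity, since $\boxplus_j$ is only defined through composable representatives and one must first produce three cubes that can be composed in a single chain. Given $A,B,C\in Y$, I would pick any $c_1\in A$; by Lemma \ref{gencompose} there is $c_2\in B$ with $c_1$ $j$-composable with $c_2$, and applying Lemma \ref{gencompose} once more (to $c_2$ and the class $C$) there is $c_3\in C$ with $c_2$ $j$-composable with $c_3$. By the composability noted just before the identity (\ref{comp-assoc}), the pairs $(c_1\boxplus_j c_2,\,c_3)$ and $(c_1,\,c_2\boxplus_j c_3)$ are then both $j$-composable, and $(c_1\boxplus_j c_2)\boxplus_j c_3=c_1\boxplus_j(c_2\boxplus_j c_3)$.

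Reading this through the definition of $\boxplus_j$ on classes then finishes associativity: since $c_1,c_2$ are composable we have $A\boxplus_j B=[c_1\boxplus_j c_2]$, and since $c_1\boxplus_j c_2$ is composable with $c_3$ the left-hand side of (\ref{comp-assoc}) represents $(A\boxplus_j B)\boxplus_j C$; symmetrically the right-hand side represents $A\boxplus_j(B\boxplus_j C)$. Well-definedness (Lemma \ref{compwell}) guarantees these readings are legitimate regardless of the particular representatives chosen, so the two classes coincide.

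Finally I would handle inverses. Given $A\in Y$, choose $c\in A$ and set $A^{-1}:=[c\circ r_j]$, where $r_j$ is the reflection of chapter \ref{cubmor}. The first assertion of Lemma \ref{idinv} gives $c\boxplus_j(c\circ r_j)\in E$, i.e. $A\boxplus_j A^{-1}=E$, so every element has a right inverse. Since $E$ is also a right identity, the group axioms follow from associativity by exactly the reasoning used in the proof of Theorem \ref{class1step} (a right identity together with right inverses suffices). Alternatively, applying the first assertion of Lemma \ref{idinv} to $c\circ r_j$ and using that $r_j$ is an involution (Lemma \ref{transim}) yields $(c\circ r_j)\boxplus_j c\in E$, exhibiting $A^{-1}$ as a genuine two-sided inverse.
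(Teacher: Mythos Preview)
Your proof is correct and follows essentially the same approach as the paper: invoke Lemma \ref{idinv} for the identity and inverses, then iterate Lemma \ref{gencompose} to obtain a chain of $j$-composable representatives and appeal to (\ref{comp-assoc}) for associativity. You simply spell out more of the details (totality via Corollary \ref{allcomposable}, well-definedness via Lemma \ref{compwell}, and the two-sided inverse) that the paper leaves implicit.
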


\begin{proof} Lemma \ref{idinv} verifies the properties of the identity element and the existence of an inverse. It remains to show associativity. For this let $A,B,C$ be elements in $Y$. By iterating lemma \ref{gencompose} we can find elements $c_1\in A,c_2\in B,c_3\in C$ such that $c_1$ is $j$-composable with $c_2$ and $c_2$ is $j$-composable with $c_3$. Then equation  (\ref{comp-assoc}) finishes the proof.
\end{proof}

\begin{lemma} Let $j,l\in [k]$ with $j \neq l$ and $A_{0,0},A_{1,0},A_{0,1},A_{1,0}\in Y$. then  
$$(A_{0,0}\boxplus_j A_{1,0})\boxplus_l (A_{0,1}\boxplus_j A_{1,1})=(A_{0,0}\boxplus_l A_{0,1})\boxplus_j(A_{1,0}\boxplus_l A_{1,1}).$$
\end{lemma}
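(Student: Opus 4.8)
The plan is to prove this interchange (exchange) law geometrically, by realizing both sides as one and the same composite of a single coherent $2\times 2$ array of $k$-cubes, read off in the two possible orders. Since $\boxplus_j$ is well defined on $\thickapprox_i$-classes (Lemma \ref{compwell}), it suffices to exhibit representatives $c_{a,b}\in A_{a,b}$, for $(a,b)\in\{0,1\}^2$, that are simultaneously $j$-composable along rows and $l$-composable along columns, and then to check that assembling them in the order ``$j$ then $l$'' yields the same $k$-cube as assembling them ``$l$ then $j$''. I may assume $j<l$ throughout, the opposite case being obtained by transposing the roles of the two directions and of the indices.

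The representatives will be produced by a single simplicial gluing rather than chosen one by one. Replace the coordinate $j$ by two consecutive coordinates $j_1<j_2$ and the coordinate $l$ by two consecutive coordinates $l_1<l_2$, inserted in order (so, since $j<l$, the four new coordinates occur as $j_1<j_2<\dots<l_1<l_2$), and let $T\subseteq\{0,1\}^{k+2}$ be the simplicial set of vectors in which each of the two doubled pairs carries at most one $1$. The maximal vectors of $T$ have both doubled pairs of weight one and all remaining $k-2$ coordinates equal to $1$, hence have weight exactly $k$; consequently, for any map $f\colon T\to X$ and any maximal $v$, the cube $f\circ s_v$ is a $k$-cube and therefore automatically lies in $C^{k}(X)$ by $k$-ergodicity. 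Thus every $f\colon T\to X$ is cube preserving, and Lemma \ref{simpglue} produces $W\in C^{k+2}(X)$ with $W|_T=f$. Moreover the four maximal vectors $v_{a,b}$ (one for each choice of the two doubled pairs) are pairwise incomparable, so $f(v_{a,b})$ is a ``private'' value of the corner cube $c_{a,b}:=f\circ s_{v_{a,b}}$; after assigning $f$ arbitrarily on the non-maximal points of $T$, Lemma \ref{eqrep-gen} lets me adjust each of these four private values so that $c_{a,b}$ lands in the prescribed class $A_{a,b}$. Because all four corner cubes are restrictions of the single function $f$, they automatically agree on their shared faces, and this shared-face agreement is exactly the $j$- and $l$-composability required (up to the reflections $r_{j},r_{l}$ built into the face-gluing of Lemma \ref{facegluing}, which are themselves in $\mathcal{G}$ by Lemma \ref{transim}).

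Finally, let $\lambda\in\hom_{\mathcal{G}}(\{0,1\}^k,\{0,1\}^{k+2})$ be the double-doubling map sending $v$ to the vector obtained by replacing its $j$-entry by $(1-v_j,v_j)$ and its $l$-entry by $(1-v_l,v_l)$. Reading the associated function $\gamma_\lambda$ along the target coordinates gives the non-decreasing sequence $1,\dots,j-1,j,j,j+1,\dots,l-1,l,l,l+1,\dots,k$, so $\lambda$ is monotone and hence lies in $\mathcal{G}$ precisely because $j<l$; thus $W\circ\lambda\in C^{k}(X)$ by the presheaf axiom. I will check that $W\circ\lambda$ is the total composite of the corner cubes: by Lemma \ref{facegluing} each one-direction composite is the function determined by its two boundary faces, so the iterated composite $(c_{0,0}\boxplus_j c_{1,0})\boxplus_l(c_{0,1}\boxplus_j c_{1,1})$ is, quadrant by quadrant in $(v_j,v_l)$, just $c_{v_j,v_l}$, and the same holds for $(c_{0,0}\boxplus_l c_{0,1})\boxplus_j(c_{1,0}\boxplus_l c_{1,1})$. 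Since both folding orders collapse the two doubled pairs through the very same map $\lambda$, they yield the identical $k$-cube $W\circ\lambda$, and passing to $\thickapprox_i$-classes via Lemma \ref{compwell} gives the desired equality; associativity in each direction (equation \eqref{comp-assoc}) justifies treating the iterated expressions unambiguously.

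I expect the main obstacle to be the reconciliation in the last step: verifying that the two-step face-gluing composition of Lemma \ref{facegluing} really coincides with the one-shot simplicial gluing of $W$, and in particular tracking the reflections that Lemma \ref{facegluing} inserts so that the quadrant-wise identification of $W\circ\lambda$ with each iterated composite is exact. Keeping every auxiliary reparametrization monotone — which is what confines $\lambda$ and the intermediate fold maps to $\mathcal{G}$ and forces the ordered placement of the doubled coordinates under the hypothesis $j<l$ — is the groupspace-specific difficulty that has no analogue in the nilspace setting, where such maps are freely available.
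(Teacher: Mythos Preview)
Your overarching strategy—produce representatives $c_{a,b}\in A_{a,b}$ for which all the composites exist and then observe that both iterated composites equal, quadrant by quadrant, the single function $x\mapsto c_{x_j,x_l}(x)$—is sound, and that quadrant observation is exactly how the paper concludes as well. The gap is in your construction of the representatives. With $c_{a,b}:=f\circ s_{v_{a,b}}$ the cubes $c_{0,b}$ and $c_{1,b}$ agree on the face $\{x_j=0\}$ (both restrict to $f$ on the face of $T$ where the entire $j$-pair vanishes), whereas $j$-composability in the sense of Lemma~\ref{facegluing} demands $c_{0,b}\circ e^k_{j,1}=c_{1,b}\circ e^k_{j,0}$, i.e.\ a match of the $\{x_j=1\}$ face of the first with the $\{x_j=0\}$ face of the second. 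These are different conditions, and your $c_{a,b}$ fail the second one. Inserting $r_j,r_l$ as you propose replaces some $c_{a,b}$ by $c_{a,b}\circ r_j$ or $c_{a,b}\circ r_l$, but by Lemma~\ref{idinv} those lie in the $\boxplus_j$- respectively $\boxplus_l$-inverse classes; undoing this at the present stage is circular, since the lemma you are proving is precisely what feeds the Eckmann--Hilton argument identifying the two operations. The cube $W\circ\lambda$ does not rescue the situation either: it is the quadrant assembly of the \emph{reflected} representatives, not of the $c_{a,b}$ themselves.

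The paper sidesteps all of this by choosing $c_{a,b}:=q_{v_{a,b},x_{a,b}}$, where $v_{a,b}\in\{0,1\}^k$ has $j$-coordinate $a$, $l$-coordinate $b$ and all other coordinates $0$, and $x_{a,b}$ is supplied by Lemma~\ref{eqrep}. Each such delta cube is constant $x_0$ on every face relevant to gluing (since $v_{a,b}$ lies in neither $\{x_j=1-a\}$ nor $\{x_l=1-b\}$), so all four composabilities hold trivially, and both iterated composites are visibly the same function. No simplicial gluing, no $W$, and no $\lambda$ are needed.
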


\begin{proof} To show the statement it is enough to find representatives $c_{a,b}\in A_{a,b}$ for $a,b\in\{0,1\}$ such that 
$$(c_{0,0}\boxplus_j c_{1,0})\boxplus_l (c_{0,1}\boxplus_j c_{1,1})=(c_{0,0}\boxplus_l c_{0,1})\boxplus_j(c_{1,0}\boxplus_l c_{1,1}).$$
For this purpose, let $v_{a,b}$ denote the element in $\{0,1\}^k$ whose $j$-th coordinate is $a$, $l$-th coordinate is $b$ and the rest of the coordinates are $0$. By lemma \ref{eqrep} there are elements $x_{a,b}$ for $a,b\in\{0,1\}$ such that 
$c_{a,b}:=q_{v_{a,b},x_{a,b}}\in A_{a,b}$. It is clear that these cubes satisfies the above equation and so the proof is compplete.
\end{proof}

\begin{theorem}[Eckmann-Hilton \cite{Eckmann1962}]\label{Eckmann-Hilton} Let $\times$ and $\otimes$ be two unital binary operations on a set $X$. Suppose
\[
(a \times b) \otimes (c \times d) = (a \otimes c) \times (b \otimes d)
\]
for all $a, b, c, d \in X$. Then $\times$ and $\otimes$ are in fact the same operation, and are commutative and associative.
\end{theorem}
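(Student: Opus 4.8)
The plan is to exploit the single interchange hypothesis by substituting carefully chosen tuples built out of the two units. Write $e$ for the unit of $\times$ and $e'$ for the unit of $\otimes$, so that $e\times x=x\times e=x$ and $e'\otimes x=x\otimes e'=x$ for all $x\in X$. Every step below is a direct substitution into the identity $(a\times b)\otimes(c\times d)=(a\otimes c)\times(b\otimes d)$.

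First I would show the two units coincide. Applying the interchange law to the tuple $(a,b,c,d)=(e,e',e',e)$ gives
$$(e\times e')\otimes(e'\times e)=(e\otimes e')\times(e'\otimes e).$$
The left-hand side collapses to $e'\otimes e'=e'$ (using first that $e$ is a $\times$-unit, then that $e'$ is an $\otimes$-unit), while the right-hand side collapses to $e\times e=e$ (using first that $e'$ is an $\otimes$-unit, then that $e$ is a $\times$-unit). Hence $e=e'$, and I denote this common element by $1$.

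Next I would show the two operations agree. For arbitrary $a,b\in X$ the interchange law applied to $(a,1,1,b)$ yields
$$a\otimes b=(a\times 1)\otimes(1\times b)=(a\otimes 1)\times(1\otimes b)=a\times b,$$
so $\times$ and $\otimes$ are literally the same operation; write $a\cdot b$ for it. With this identification the hypothesis becomes the \emph{medial law} $(a\cdot b)\cdot(c\cdot d)=(a\cdot c)\cdot(b\cdot d)$.

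Finally, commutativity and associativity fall out of the medial law by inserting the unit $1$. Taking $(a,b,c,d)=(1,a,b,1)$ gives
$$a\cdot b=(1\cdot a)\cdot(b\cdot 1)=(1\cdot b)\cdot(a\cdot 1)=b\cdot a,$$
which is commutativity, and taking instead $c=1$ in the medial law gives $(a\cdot b)\cdot d=a\cdot(b\cdot d)$, which is associativity. I do not expect a genuine obstacle here: the whole argument is a short chain of substitutions into the one hypothesis, and the only point requiring a moment of thought is the opening choice $(e,e',e',e)$ that forces the two units to coincide; once the units are merged, everything else is essentially automatic.
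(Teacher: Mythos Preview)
Your proof is correct and is the standard Eckmann--Hilton argument. The paper itself does not supply a proof of this theorem; it merely states it with a citation to \cite{Eckmann1962} and then applies it, so there is no in-paper proof to compare against.
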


From the previous lemma and the Eckmann-Hilton argument we obtain that

\begin{corollary}\label{sameop} The operations $\boxplus_j$ on $Y$ are all commutative and they are the same for every $j\in[k]$.
\end{corollary}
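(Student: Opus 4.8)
The plan is to invoke the Eckmann--Hilton theorem (Theorem \ref{Eckmann-Hilton}) essentially verbatim, with all the real work already packaged into the preceding lemma and into Lemma \ref{grstructure}. Since we are in the setting of Theorem \ref{class-k-step-k-ergodic} we may assume $k\geq 2$, so the index set $[k]$ contains at least two distinct elements. First I would fix any pair $j,l\in[k]$ with $j\neq l$ and regard $\times:=\boxplus_j$ and $\otimes:=\boxplus_l$ as two binary operations on the single set $Y$.

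Next I would verify the two hypotheses of Theorem \ref{Eckmann-Hilton}. Unitality of both $\times$ and $\otimes$ is exactly Lemma \ref{grstructure}, which asserts that $(Y,\boxplus_m)$ is a group with identity element $E$ for every $m\in[k]$; in particular each $\boxplus_m$ is a unital operation. The interchange law required by Theorem \ref{Eckmann-Hilton}, namely
$$(a\times b)\otimes(c\times d)=(a\otimes c)\times(b\otimes d),$$
is precisely the statement of the lemma immediately preceding Theorem \ref{Eckmann-Hilton}, under the substitution $a=A_{0,0}$, $b=A_{1,0}$, $c=A_{0,1}$, $d=A_{1,1}$ together with $\times=\boxplus_j$, $\otimes=\boxplus_l$. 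With both hypotheses in place, Theorem \ref{Eckmann-Hilton} yields that $\boxplus_j$ and $\boxplus_l$ coincide and that the resulting common operation is commutative and associative.

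Finally I would observe that the conclusion is uniform in the chosen pair: since $k\geq 2$, for every two indices $j\neq l$ we obtain $\boxplus_j=\boxplus_l$, so all the operations $\{\boxplus_j\}_{j\in[k]}$ reduce to a single operation on $Y$, which is commutative by the previous step. I do not anticipate any genuine obstacle, as this corollary is purely a bookkeeping application of an already-stated theorem. The one point deserving a word of care is the hypothesis $k\geq 2$: it is exactly what guarantees two distinct coordinates $j\neq l$, and without it the interchange lemma is vacuous and the Eckmann--Hilton argument cannot be started, which is consistent with the fact that for $k=1$ the operation need not be commutative (the $1$-step case is ordinary group theory, as in Theorem \ref{class1step}).
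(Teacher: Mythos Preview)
Your proposal is correct and matches the paper's approach exactly: the paper simply states that the corollary follows ``from the previous lemma and the Eckmann--Hilton argument,'' and your write-up spells out precisely this, citing Lemma \ref{grstructure} for unitality and the preceding interchange lemma for the Eckmann--Hilton hypothesis. Your remark on the necessity of $k\geq 2$ is also appropriate and consistent with the ambient hypothesis of Theorem \ref{class-k-step-k-ergodic}.
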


In the rest of the chapter we will omit the index $j$ from $\boxplus_j$ when applied to $\thickapprox_i$ classes. 

\begin{corollary} $(Y,\boxplus)$ is an abelian group with identity element $E$.
\end{corollary}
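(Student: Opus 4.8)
The plan is to assemble the two immediately preceding results, since the substantive work has already been carried out. First I would invoke Corollary \ref{sameop}, which tells us two things at once: that the operations $\boxplus_j$ for $j \in [k]$ are all literally the same operation on $Y$ (this is precisely what justifies dropping the index and writing $\boxplus$), and that this common operation is commutative. Having fixed the notation $\boxplus := \boxplus_j$ for any (hence every) $j \in [k]$, I would then simply cite Lemma \ref{grstructure}, which asserts that $(Y, \boxplus_j)$ is a group with identity element $E$. This supplies associativity, the neutral element $E$, and the existence of inverses.

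Combining these, $(Y, \boxplus)$ satisfies all the group axioms and is commutative, so it is an abelian group with identity $E$, as claimed. There is no real obstacle at this stage: the only delicate points were the coincidence of the various $\boxplus_j$ and their commutativity, both of which were already extracted from the Eckmann--Hilton argument (Theorem \ref{Eckmann-Hilton}) in Corollary \ref{sameop}, while the group axioms for a single fixed index were established in Lemma \ref{grstructure}. Thus the proof is a one-line deduction: apply Corollary \ref{sameop} and Lemma \ref{grstructure} together.
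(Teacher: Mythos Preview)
Your proposal is correct and matches the paper's approach: the paper states this corollary without proof, treating it as an immediate consequence of Lemma \ref{grstructure} (group structure with identity $E$) and Corollary \ref{sameop} (commutativity and coincidence of the $\boxplus_j$), exactly as you describe.
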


Let us put the abelian group structure $+$ on $X$ by using the bijection $\psi:=\psi_{0^k}:X\to Y$. This means that for $x_1,x_2\in X$ we define $$x_1+x_2=\psi^{-1}(\psi(x_1)\boxplus \psi(x_2)).$$ It follows from this definition that the unit element is $x_0$.

\begin{lemma}\label{corvalues} For $w\in\{0,1\}^k$ and $x\in X$ we have that $\psi_w(x)=(-1)^{h(w)}\psi(x)$.
\end{lemma}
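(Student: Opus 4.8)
The plan is to prove that flipping a single coordinate of $w$ replaces $\psi_w(x)$ by its inverse in the abelian group $(Y,\boxplus)$, and then to induct on the height $h(w)$.

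First I would record the elementary identity $q_{w,x}\circ r_j=q_{r_j(w),x}$, valid for every $j\in[k]$. This is immediate because $r_j$ is an involution on $\{0,1\}^k$: the composite $v\mapsto q_{w,x}(r_j(v))$ takes the value $x$ exactly at the unique $v$ with $r_j(v)=w$, namely $v=r_j(w)$, and takes the value $x_0$ at all other vertices.

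Next I would compute the $\thickapprox_i$ class of $c\circ r_j$ from that of $c$. Since $r_j\circ e^k_{j,0}=e^k_{j,1}$, the cubes $c$ and $c\circ r_j$ satisfy $c\circ e^k_{j,1}=(c\circ r_j)\circ e^k_{j,0}$ and hence are always $j$-composable, so Lemma \ref{idinv} applies and gives $c\boxplus_j(c\circ r_j)\in E$. By Corollary \ref{sameop} every $\boxplus_j$ coincides with the single abelian operation $\boxplus$, and by Lemma \ref{compwell} the class of a composition is the $\boxplus$-product of the classes of its factors; therefore the class of $c\circ r_j$ is exactly the $\boxplus$-inverse of the class of $c$. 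Specializing to $c=q_{w,x}\in\psi_w(x)$ and invoking the identity from the previous step, I obtain $\psi_{r_j(w)}(x)=-\psi_w(x)$, where $-$ denotes the inverse in $Y$.

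Finally I would run the induction on $h(w)$. The base case $w=0^k$ is just the defining relation $\psi_{0^k}(x)=\psi(x)$, which matches $(-1)^{0}=1$. For general $w$ with $\supp{w}=\{j_1,\dots,j_m\}$, successively flipping the coordinates $j_1,\dots,j_m$ carries $0^k$ to $w$ through a chain of vertices whose heights are $0,1,\dots,m=h(w)$; by the previous step each flip inverts the value of $\psi$, so after $m$ flips I arrive at $\psi_w(x)=(-1)^{h(w)}\psi(x)$, as required. I do not anticipate a genuine obstacle; the only points needing care are the verification that $c$ and $c\circ r_j$ are $j$-composable, so that Lemma \ref{idinv} is applicable, and the (already available) compatibility of cube composition with the classes in $Y$ that legitimizes passing from $c\boxplus_j(c\circ r_j)\in E$ to the inversion statement.
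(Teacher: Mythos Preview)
Your proof is correct and follows essentially the same route as the paper: establish that adjacent vertices $w$ and $r_j(w)$ give $\boxplus$-inverse values of $\psi$, and then walk from $0^k$ to $w$ one coordinate at a time. The only cosmetic difference is that you obtain the single-flip inversion by invoking Lemma~\ref{idinv} applied to $c=q_{w,x}$ together with the identity $q_{w,x}\circ r_j=q_{r_j(w),x}$, whereas the paper computes directly that $q_{w_2,x}\boxplus_j q_{w_1,x}$ (with $(w_2)_j=1$, $(w_1)_j=0$) equals the constant $x_0$ cube; both arguments yield $\psi_{w_1}(x)\boxplus\psi_{w_2}(x)=E$ and proceed identically from there.
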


\begin{proof} First we show that if $w_1,w_2\in\{0,1\}^k$ differ only in one coordinate then $\psi_{w_1}(x)\boxplus\psi_{w_2}(x)=E$. Assume that $w_1$ and $w_2$ differ in the $j$-th coordinate and without loss of generality we can assume that the $j$-th coordinate of $w_1$ is $0$. then we have that $q_{w_2,x}\boxplus_j q_{w_1,x}$ is the constant $x_0$ cube. This verifies that $\psi_{w_1}(x)=-\psi_{w_2}(x)$. By iterating this statement on a path connecting $0^k$ with an arbitrary vertex $w$ in the line graph of the cube the general statement follows. 
\end{proof} 

\begin{lemma}\label{kerg-facequiv} Let $c\in C^k(X)$. Then 
$$\psi^{-1}(m(c))=\sum_{v\in\{0,1\}^k}(-1)^{h(v)}c(v)$$ where $m:C^k(X)\to Y$ is the factor map by $\thickapprox_i$.
\end{lemma}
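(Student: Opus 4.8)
The plan is to reduce everything to the single identity
\[
m(c)=\mathop{\boxplus}_{w\in\{0,1\}^k}\psi_w(c(w))\qquad(\star)
\]
in the abelian group $(Y,\boxplus)$, and then to convert $(\star)$ into the stated formula using Lemma \ref{corvalues} together with the fact that $\psi=\psi_{0^k}$ is a group isomorphism. The only structural property of $m$ that I would need is that it turns composition into the group operation: if $c_1,c_2\in C^k(X)$ are $j$-composable, then by the definition of $\boxplus_j$ on $\thickapprox_i$-classes combined with Corollary \ref{sameop} we have $m(c_1\boxplus_j c_2)=m(c_1)\boxplus m(c_2)$.

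The heart of the argument is $(\star)$, which I would prove by induction on the size of $S:=\{w\in\{0,1\}^k:c(w)\neq x_0\}$. When $|S|\le 1$ the cube $c$ is either constant or a corner cube $q_{w,x}$, and $(\star)$ holds on the nose since $\psi_w(x_0)=E$ and, by Corollary \ref{abijection}, $m(q_{w,x})=\psi_w(x)$. For the inductive step I would choose a coordinate $j$ on which the elements of $S$ do not all agree (such a $j$ exists once $|S|\ge 2$, as two distinct vertices differ somewhere), and split $c$ along direction $j$ using an all-$x_0$ gluing face: let $c_1$ agree with $c$ on the face $v_j=0$ and equal $x_0$ on $v_j=1$, and let $c_2$ equal $x_0$ on $v_j=0$ and agree with $c$ on $v_j=1$. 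This is exactly where $k$-ergodicity does the real work: since $C^k(X)=X^{\{0,1\}^k}$, the maps $c_1,c_2$ are automatically cubes, their gluing faces coincide (both constant $x_0$), so they are $j$-composable and $c_1\boxplus_j c_2=c$. The non-$x_0$ vertices of $c_1$ are $S\cap\{w_j=0\}$ and those of $c_2$ are $S\cap\{w_j=1\}$, each strictly smaller than $S$ by the choice of $j$, so the induction hypothesis applies to both. Combining $m(c)=m(c_1)\boxplus m(c_2)$ with the two instances of $(\star)$, and using $\psi_w(x_0)=E$ for the vertices set to $x_0$, yields $(\star)$ for $c$. I expect this splitting to be the main point: the trick is that $k$-ergodicity permits composing with a trivial face, so that composability and cube-membership come for free and the induction on $|S|$ runs cleanly.

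Finally, to deduce the lemma I would substitute Lemma \ref{corvalues}, that is $\psi_w(c(w))=(-1)^{h(w)}\psi(c(w))$, into $(\star)$ to obtain $m(c)=\mathop{\boxplus}_{w}(-1)^{h(w)}\psi(c(w))$. Since the operation $+$ on $X$ was defined precisely so that $\psi$ is an isomorphism from $(X,+)$ onto $(Y,\boxplus)$ sending $x_0$ to $E$, its inverse is again an isomorphism and commutes with passing to inverses; applying $\psi^{-1}$ term by term therefore gives $\psi^{-1}(m(c))=\sum_{w\in\{0,1\}^k}(-1)^{h(w)}c(w)$ in $(X,+)$, which is the claimed identity.
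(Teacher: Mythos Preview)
Your proof is correct and follows the same overall strategy as the paper: establish the identity $m(c)=\boxplus_{w}\psi_w(c(w))$ (the paper's equation (\ref{corsum}), noting $m(q_{w,c(w)})=\psi_w(c(w))$) and then convert via Lemma \ref{corvalues} and the isomorphism $\psi$. The difference lies in how $(\star)$ is proved. The paper builds $c$ from all $2^k$ corner cubes $q_{v,c(v)}$ by an explicit coordinate-by-coordinate recursion: it defines $G_r:\{0,1\}^r\to C^k(X)$ with $G_k(v)=q_{v,c(v)}$ and $G_{r-1}(v)=G_r(v0)\boxplus_r G_r(v1)$, ending at $G_0=c$. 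You instead run an induction on the number of non-$x_0$ vertices, splitting $c$ along a coordinate on which the support is not constant into two halves glued along an all-$x_0$ face. Both arguments exploit $k$-ergodicity to guarantee that all intermediate maps are cubes and that the relevant compositions exist; your version has the minor advantage of a lighter inductive bookkeeping (two pieces at each step rather than a full binary tree), while the paper's version gives a single explicit formula for the decomposition.
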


\begin{proof} For each element $v\in\{0,1\}^k$ let $c_v:=q_{v,c(v)}\thickapprox_i \psi_v(c(v))$. By lemma \ref{corvalues} we have that $$\psi^{-1}(m(c_v))=\psi^{-1}(\psi_v(c(v)))=(-1)^{h(v)}c(v).$$ Using this equation, the statement of the lemma follows by applying $\psi^{-1}$ to both sides of the equation
\begin{equation}\label{corsum}
m(c)=\sum_{v\in\{0,1\}^k}m(c_v)
\end{equation} where the summation uses the addition $\boxplus$ on $Y$.
The rest of the proof is the proof of formula (\ref{corsum}). We claim that with an appropriate order of iterated applications of $\boxplus_j$ operations (with changing values for $j$) of cubes $c_v$ we obtain $c$. To do this we define functions $G_r:\{0,1\}^r\to C^k(X)$ for $r\in\{0,1,2,\dots,n\}$ recursively in reverse order starting at $n$. We set $G_n(v):=c_v$ for $v\in\{0,1\}^n$. Assume now that $G_r$ is constructed for some $n\geq r>0$. Then we define $G_{r-1}(v):=G_r(v_0)\boxplus_r G_r(v_1)$ where $v_0$ (resp. $v_1$) is obtained from $v$ by appending $0$ (resp $1$) at the end. It is easy to see that $G_r(v_0)$ and $G_r(v_1)$ are $r$ composable and that at the end of the process we obtain $G_0(0)=c$. By applying $m$ to the whole process and corollary \ref{sameop} the proof is complete.
\end{proof} 

\begin{lemma} We have for a function $c:\{0,1\}^{k+1}\to X$ that $c\in C^{k+1}(X)$ if and only if formula (\ref{ho-abelian-cube}) holds for $c$.
\end{lemma}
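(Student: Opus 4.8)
The plan is to prove both implications by relating the alternating sum $\sum_{v\in\{0,1\}^{k+1}}(-1)^{h(v)}c(v)$, computed in the abelian group $(X,+)$ obtained via $\psi$, to the equivalence relation $\thickapprox_i$ between opposite faces, using Lemma \ref{kerg-facequiv}. Throughout I fix the index $i\in[k+1]$ defining $\thickapprox_i$ and write $m:C^k(X)\to Y$ for the factor map; the quantity $w(c)=\sum_{v}(-1)^{h(v)}c(v)$ makes sense because $X$ now carries an abelian group structure, so formula (\ref{ho-abelian-cube}) is the assertion that this sum equals the identity $x_0$.

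First I would treat the forward direction. Suppose $c\in C^{k+1}(X)$. Since $X$ is $k$-ergodic, the two opposite faces $c\circ e^{k+1}_{i,0}$ and $c\circ e^{k+1}_{i,1}$ lie in $C^k(X)$, and the existence of $c$ itself witnesses $c\circ e^{k+1}_{i,0}\thickapprox_i c\circ e^{k+1}_{i,1}$, whence $m(c\circ e^{k+1}_{i,0})=m(c\circ e^{k+1}_{i,1})$. Applying $\psi^{-1}$ together with Lemma \ref{kerg-facequiv} to each face yields the equality of the two alternating face sums. The key computational step is to split the sum defining $w(c)$ according to the $i$-th coordinate: the vectors with $u_i=0$ are exactly the $e^{k+1}_{i,0}(v)$ with $h(e^{k+1}_{i,0}(v))=h(v)$, while those with $u_i=1$ are the $e^{k+1}_{i,1}(v)$ with $h(e^{k+1}_{i,1}(v))=h(v)+1$, the extra $+1$ producing a sign flip. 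Thus $w(c)$ equals the first face sum minus the second, which vanishes by the preceding equality; this is exactly formula (\ref{ho-abelian-cube}).

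For the converse I would invoke unique completion. Assume formula (\ref{ho-abelian-cube}) holds. By $k$-ergodicity every lower-dimensional restriction of $c$ is automatically a cube, so $c|_{\{0,1\}^{k+1}_*}$ is a corner, and by the completion axiom together with the $k$-step property there is a unique $\hat c\in C^{k+1}(X)$ agreeing with $c$ on $\{0,1\}^{k+1}_*$. The forward direction already established gives $w(\hat c)=x_0$. Since $\hat c$ and $c$ agree at every vertex except possibly $1^{k+1}$, and the coefficient of the value at $1^{k+1}$ in the alternating sum is $(-1)^{k+1}$, which is invertible in $\mathbb{Z}$, the two equations $w(\hat c)=x_0$ and $w(c)=x_0$ force $c(1^{k+1})=\hat c(1^{k+1})$. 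Hence $c=\hat c\in C^{k+1}(X)$.

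The main obstacle is the bookkeeping in the forward direction: correctly matching each face to its coordinate slice, tracking how the height $h$ behaves under $e^{k+1}_{i,0}$ versus $e^{k+1}_{i,1}$ (the extra unit giving the crucial sign), and ensuring that all alternating sums are interpreted in the abelian group $(X,+)$ so that Lemma \ref{kerg-facequiv} applies verbatim. Once this is in place, the converse follows formally from $k$-ergodicity, the uniqueness of completions in a $k$-step groupspace, and the invertibility of the leading sign.
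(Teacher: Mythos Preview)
Your proof is correct and, in the forward direction, identical in substance to the paper's: both apply Lemma~\ref{kerg-facequiv} to the two opposite $i$-faces and observe that the height shift under $e^{k+1}_{i,1}$ produces the sign that makes $w(c)$ equal the difference of the two face-sums.

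For the converse the paper is slightly more direct: it runs the whole argument as a single chain of equivalences, noting that $c\in C^{k+1}(X)$ is \emph{equivalent} to $c\circ e^{k+1}_{i,0}\thickapprox_i c\circ e^{k+1}_{i,1}$ (any witness for $\thickapprox_i$ has the same two faces as $c$, hence the same vertex values, hence equals $c$), so no appeal to corner completion or the $k$-step uniqueness is needed. Your route via completing the corner to $\hat c$ and comparing $w(c)$ with $w(\hat c)$ is perfectly valid, just a small detour. One phrasing nit: saying $(-1)^{k+1}$ is ``invertible in $\mathbb{Z}$'' is beside the point, since $(X,+)$ may have torsion; what you actually use is that multiplication by $\pm1$ is the identity or negation on any abelian group, hence injective.
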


\begin{proof} By definition of $\thickapprox_i$ we have that $c\in C^{k+1}(X)$ holds for $c$ if and only if $c_1:=c\circ e^{k+1}_{i,0}, c_2:=c\circ e^{k+1}_{i,1}$ are both in $C^k(X)$ and furthermore $c_1\thickapprox_i c_2$ holds. Observe however that the condition  $c_1,c_2\in C^k(X)$ automatically holds in our case because we assumed $k$-ergodicity for $X$. Thus we have that $c\in C^{k+1}(X)$ holds if and only if $m(c_1)=m(c_2)$. On the other hand notice that by lemma \ref{kerg-facequiv}, the formula (\ref{ho-abelian-cube}) is equivalent with $\psi^{-1}(m(c_1)-m(c_2))=0$. Since $\psi$ is a group isomprphism between $(X,+)$ and $(Y,\boxplus)$ the proof is complete. 
\end{proof}

Now lemma \ref{k-step-det} finishes thew proof of theorem \ref{class-k-step-k-ergodic}.

\section{Characteristic factors of groupspaces}\label{charfact}

In this chapter we introduce relations $\sim_i$ on arbitrary groupspaces that will be used later to reduce the structure of cubespaces to finite step groupspaces. 

\begin{definition} Let $X$ be a groupspace with an equivalence relation $\sim$. Let $m:X\to X/\sim$ be the factor map. The {\bf push-forward cubic structure} on $X/\sim$ is defined by $C^n(X/\sim)=\{m\circ c:c\in C^n(X)\}$.
\end{definition}

It is clear that the push-forward cubic structure satisfies the first two groupspace axioms. 

\begin{definition} Let $X$ be a groupspace. An equivalence relation $\sim$ is a {\bf congruence} of $X$ if $X/\sim$ is a groupspace with the push-forward cubic structure.
\end{definition}

In the next definition we introduce a binary relation and then we show that it is an equivalence relation.

\begin{definition}\label{simdef} Let $X$ be a cubespace and $x,y\in X$. We say that $x\sim_i y$ if there are two cubes $c_1,c_2\in C^{i+1}(X)$ such that $c_1(1^{i+1})=x$,$c_2(1^{i+1})=y$ and $c_1(v)=c_2(v)$ holds for every element in $\{0,1\}^{i+1}_*$.
\end{definition}

It is clear that $\sim_i$ is reflexive and symmetric. It remains to show that it is transitive. We will need the next lemma.

\begin{lemma}\label{equivsimp} Let $X$ be a groupspace and  $i\in\mathbb{N}$. Assume that $x\sim_i y$ holds for $x,y\in X$. Let $c:\{0,1\}^{i+1}\to X$ denote the function with $c(1^{i+1})=x$ and $c(v)=y$ for every $v\in\{0,1\}^{i+1}_*$. Then $c\in C^{i+1}(X)$.
\end{lemma}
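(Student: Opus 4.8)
Set $n := i+1$, and let $c_1, c_2 \in C^n(X)$ be the two witnesses furnished by Definition \ref{simdef}: they agree on $\{0,1\}^n_*$, with $c_1(1^n) = x$ and $c_2(1^n) = y$. The target $c$ is the function that is constant $y$ on $\{0,1\}^n_*$ and equals $x$ at $1^n$. Note first that completion alone is not enough: $X$ is an arbitrary groupspace, so the constant-$y$ corner has many completions and there is no reason its top value should be $x$. The content of the lemma is precisely that the value $x$ is achievable, and the data $c_1, c_2$ must be used for this. My plan is to realize $c$ as $g \circ \alpha_n$ for a carefully chosen cube-preserving map $g : T_n \to X$ and then invoke the three-cube construction (Lemma \ref{threelem}).

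I would define $g$ as a modification of $c_2 \circ {\rm fold}_n$ at a single vertex. Recall that $T_n \subseteq \{0,1\}^{2n}$ consists of the vectors whose blocks $(v_{2j-1},v_{2j})$ each lie in $\{(0,0),(1,0),(0,1)\}$, that $T'_n$ consists of those with every block nonzero, and that $\alpha_n(z)$ is the maximal element whose $j$-th block is $(1,0)$ if $z_j=0$ and $(0,1)$ if $z_j=1$. Let $w_0 := \alpha_n(1^n)$, the maximal element all of whose blocks equal $(0,1)$. I would set $g(t) := c_2({\rm fold}_n(t))$ for every $t \in T_n$ with $t \neq w_0$, and $g(w_0) := x$.

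The key step is the cube-preserving check, for which (by the remark following the definition of cube-preserving maps) it suffices to verify $g \circ s_v \in C^n(X)$ for each maximal $v \in T'_n$. Two elementary facts drive this: from the proof of Lemma \ref{foldlem} one has ${\rm fold}_n \circ s_v = {\rm id}_{\{0,1\}^n}$, so $c_2 \circ {\rm fold}_n \circ s_v = c_2$; and since the $j$-th block of $s_v(u)$ equals $v$'s $j$-th block when $u_j=1$ and $(0,0)$ otherwise, the equality $s_v(u) = w_0$ forces $u = 1^n$ and $v = w_0$. Hence, if $v \neq w_0$ then $g \circ s_v = c_2 \in C^n(X)$, while if $v = w_0$ then $g \circ s_v$ agrees with $c_2$ off $1^n$ and takes value $x$ at $1^n$, so $g \circ s_{w_0} = c_1 \in C^n(X)$ (using that $c_1$ and $c_2$ agree on $\{0,1\}^n_*$ and $c_1(1^n)=x$). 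In every case $g \circ s_v$ is one of the two given cubes, so $g$ is cube preserving.

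Finally, Lemma \ref{threelem} yields $g \circ \alpha_n \in C^n(X)$, and it remains to identify this with $c$: since ${\rm fold}_n(\alpha_n(z)) = 1^n$ for every $z$, we get $g(\alpha_n(z)) = c_2(1^n) = y$ whenever $\alpha_n(z) \neq w_0$, i.e. whenever $z \neq 1^n$, whereas $g(\alpha_n(1^n)) = g(w_0) = x$. Thus $g \circ \alpha_n = c$, and so $c \in C^n(X)$. I expect the only genuine difficulty to be pinning down the modification vertex $w_0$ so that simultaneously (a) the unique maximal sub-cube through it becomes the "$x$-cube" $c_1$ while all others become $c_2$, and (b) $\alpha_n$ selects exactly $w_0$ at $z = 1^n$; everything else is bookkeeping with ${\rm fold}_n$, $s_v$ and $\alpha_n$, where the main pitfall is keeping the block/coordinate indexing consistent across the cube-preserving check and the final identification.
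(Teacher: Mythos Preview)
Your proof is correct and follows essentially the same three-cube argument as the paper: modify $c_\bullet\circ{\rm fold}_n$ at the single vertex $w_0=\alpha_n(1^n)$, check cube-preservation on maximal faces of $T_n$ (where each $s_v$ restricts to one of the two given cubes), and apply Lemma~\ref{threelem}. The only cosmetic difference is that you base the construction on $c_2\circ{\rm fold}_n$ and switch the value at $w_0$ to $x$, whereas the paper bases it on $c_1\circ{\rm fold}_n$ and switches to $y$; your choice lands directly on the required $c$, while the paper's variant produces the cube with the roles of $x$ and $y$ swapped and implicitly relies on the symmetry of $\sim_i$.
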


\begin{proof} Let $c_1$ and $c_2$ be as in definition \ref{simdef}. By lemma \ref{foldlem} we have that $c_1\circ{\rm fold}_{i+1}$ is cube preserving. Let us define $f:T_{i+1}\to X$ such that $f(\alpha_{i+1}(1^{i+1}))=y$ and $f=c_1\circ{\rm fold}_{i+1}$ holds everywhere else. We claim that $f$ is cube preserving. To see this observe that  $f\circ s_v$ is $c_2$ if $v=\alpha_{i+1}(1^{i+1})$ and is $c_1$ if $v$ is any other element of $T'_{i+1}$. Now lemma \ref{threelem} finishes the proof. 
\end{proof}

As a corollary we obtain the next lemma.

\begin{lemma} Let $X$ be a groupspace and let $i\in\mathbb{N}$. Then $\sim_i$ is an equivalence relation on $X$. 
\end{lemma}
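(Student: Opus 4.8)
The plan is to follow the structure already flagged in the text: reflexivity and symmetry of $\sim_i$ are immediate from Definition \ref{simdef} (for reflexivity take $c_1=c_2$; for symmetry simply interchange the two witnessing cubes), so the only substantive task is transitivity. Concretely, I would assume $x\sim_i y$ and $y\sim_i z$ and aim to produce a pair of cubes in $C^{i+1}(X)$ that witnesses $x\sim_i z$ in the sense of Definition \ref{simdef}.

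The conceptual difficulty to be overcome is that the raw definition of $\sim_i$ only supplies, for $x\sim_i y$, two cubes that agree on $\{0,1\}^{i+1}_*$, and separately, for $y\sim_i z$, two more cubes that agree on $\{0,1\}^{i+1}_*$; but the common restrictions of these two pairs to $\{0,1\}^{i+1}_*$ are a priori unrelated functions, so the witnesses cannot simply be pasted together. The device that dissolves this obstacle is Lemma \ref{equivsimp}, which normalizes any instance of the relation to a canonical witness: whenever $a\sim_i b$ holds, the function equal to $a$ at $1^{i+1}$ and equal to $b$ at every vertex of $\{0,1\}^{i+1}_*$ already lies in $C^{i+1}(X)$.

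First I would apply Lemma \ref{equivsimp} to $x\sim_i y$ to obtain a cube $c\in C^{i+1}(X)$ with $c(1^{i+1})=x$ and $c(v)=y$ for all $v\in\{0,1\}^{i+1}_*$. Then, invoking the symmetry of $\sim_i$ (already established above), I have $z\sim_i y$, and a second application of Lemma \ref{equivsimp} yields a cube $c'\in C^{i+1}(X)$ with $c'(1^{i+1})=z$ and $c'(v)=y$ for all $v\in\{0,1\}^{i+1}_*$. By construction $c$ and $c'$ agree on $\{0,1\}^{i+1}_*$, where both are identically equal to $y$, while $c(1^{i+1})=x$ and $c'(1^{i+1})=z$. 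Hence $c$ and $c'$ are precisely a witnessing pair for $x\sim_i z$, which establishes transitivity and completes the proof.

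I expect essentially no new combinatorial work beyond Lemma \ref{equivsimp}; the only genuine point is the normalization step, which is exactly where the simplicial-gluing and three-cube machinery of the preceding chapter (through Lemma \ref{equivsimp}) does the heavy lifting. Once both instances are placed in the canonical form carrying the common value $y$ on all of $\{0,1\}^{i+1}_*$, transitivity is immediate, and no further appeal to face gluing (Lemma \ref{facegluing}) or to the completion axiom is required.
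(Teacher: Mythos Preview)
Your proposal is correct and follows essentially the same route as the paper: reflexivity and symmetry are dispatched directly from the definition, and transitivity is obtained by using Lemma~\ref{equivsimp} (applied to $x\sim_i y$ and, via symmetry, to $z\sim_i y$) to produce two cubes in $C^{i+1}(X)$ that are both identically $y$ on $\{0,1\}^{i+1}_*$ and take the values $x$ and $z$ at $1^{i+1}$. This is precisely the paper's argument.
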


\begin{proof} As we have pointed out, reflexivity and symmetry of $\sim_i$ follows directly from the definition. To see transitivity assume that $x\sim_i y$ and $y\sim_i z$. From symmetry we have that $z\sim_i y$. Let $c_1,c_2:\{0,1\}^{i+1}\to X$ denote the two functions such that $c_1(v)=c_2(v)=y$ holds for $v\neq 1^{i+1}$ and $c_1(1^{i+1})=x,c_2(1^{i+1})=z$. We have by lemma \ref{equivsimp} that both $c_1$ and $c_2$ are in $C^{i+1}(X)$ and so $x\sim_i z$ holds.
\end{proof}

\begin{lemma}\label{finer} Let $X$ be a groupspace and $k\leq i$ be natural numbers. Then $x\sim_i y$ implies $x\sim_k y$. 
\end{lemma}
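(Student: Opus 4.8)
The plan is to pull back the two cubes witnessing $x\sim_i y$ along a single carefully chosen cube morphism $\phi\in\hom_{\mathcal{G}}(\{0,1\}^{k+1},\{0,1\}^{i+1})$, and to check that the resulting $(k+1)$-cubes witness $x\sim_k y$. The only requirement on $\phi$ I really need is that it sends the top vertex $1^{k+1}$ to the top vertex $1^{i+1}$ while sending every vertex of $\{0,1\}^{k+1}_*$ into $\{0,1\}^{i+1}_*$.

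First I would define $\phi$ by $\phi(v_1,\dots,v_{k+1}):=(v_1,\dots,v_k,v_{k+1},v_{k+1},\dots,v_{k+1})$, where the last input coordinate $v_{k+1}$ is repeated in the final $i-k+1$ output slots (when $k=i$ this is just the identity). I would then verify that $\phi$ lies in $\mathcal{G}$: each output coordinate is a projection to a single input coordinate, so $\gamma_\phi=(1,2,\dots,k,k+1,k+1,\dots,k+1)$, which is non-decreasing, hence monotonic on its support as required by Definition \ref{cubmorph}. The two vertex properties are immediate from the formula: $\phi(1^{k+1})=1^{i+1}$, and conversely $\phi(v)=1^{i+1}$ forces $v_1=\dots=v_k=v_{k+1}=1$, so $\phi$ indeed maps $\{0,1\}^{k+1}_*$ into $\{0,1\}^{i+1}_*$.

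Next, taking cubes $c_1,c_2\in C^{i+1}(X)$ witnessing $x\sim_i y$ as in Definition \ref{simdef}, I would set $c_1':=c_1\circ\phi$ and $c_2':=c_2\circ\phi$; the presheaf axiom gives $c_1',c_2'\in C^{k+1}(X)$. Then $c_1'(1^{k+1})=c_1(1^{i+1})=x$ and $c_2'(1^{k+1})=c_2(1^{i+1})=y$, while for $v\in\{0,1\}^{k+1}_*$ we have $\phi(v)\in\{0,1\}^{i+1}_*$, whence $c_1'(v)=c_1(\phi(v))=c_2(\phi(v))=c_2'(v)$ because $c_1$ and $c_2$ agree off the top vertex. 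Thus $c_1'$ and $c_2'$ witness $x\sim_k y$.

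There is no serious obstacle here; the only point that needs care is the construction of $\phi$. One cannot simply pick an arbitrary injection of $\{0,1\}^{k+1}$ into a neighbourhood of the top corner of $\{0,1\}^{i+1}$, since $\mathcal{G}$-morphisms must respect the monotone ordering of coordinates (unlike the nilspace category $\mathcal{N}$). Repeating the last coordinate is precisely the device that keeps $\gamma_\phi$ monotone while collapsing only the top vertex onto the top vertex, which is why this choice works. Note also that this argument uses nothing beyond the presheaf axiom and Definition \ref{simdef}; in particular it does not invoke Lemma \ref{equivsimp}.
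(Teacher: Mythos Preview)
Your proof is correct and follows essentially the same approach as the paper: pull back the two witnessing $(i+1)$-cubes along a single $\mathcal{G}$-morphism $\phi:\{0,1\}^{k+1}\to\{0,1\}^{i+1}$ that sends $1^{k+1}$ to $1^{i+1}$ and everything else into $\{0,1\}^{i+1}_*$. The only difference is the choice of $\phi$: the paper pads with constant $1$'s, $\phi(x_1,\dots,x_{k+1})=(x_1,\dots,x_{k+1},1,\dots,1)$, whereas you repeat the last coordinate; both are valid $\mathcal{G}$-morphisms with the required vertex properties, and the paper's choice makes the monotonicity check even more immediate since constant coordinates contribute $0$ to $\gamma_\phi$.
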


\begin{proof} Let $c_1$ and $c_2$ be as in definition \ref{simdef} of $\sim_i$. Let $\phi:\{0,1\}^k\to\{0,1\}^i$ be the map defined by $\phi(x_1,x_2,\dots,x_k)=(x_1,x_2,\dots,x_k,1,1,\dots,1)$. Then $c_1\circ\phi,c_2\circ\phi\in C^k(X)$ and they verify according to definition \ref{simdef} that $x\sim_k y$.
\end{proof}

\begin{lemma}\label{eqchange-1} Let $X$ be a groupspace and  $i\in\mathbb{N}$. Assume that $x\sim_i y$ for $x,y\in X$ and that $c\in C^{i+1}(X)$ satisfies $c(0^{i+1})=y$. Let $c':\{0,1\}^{i+1}\to X$ be the function such that $c'(v)=c(v)$ if $v\neq 0^{i+1}$ and $c'(0^{i+1})=x$. Then $c'\in C^{i+1}(X)$.
\end{lemma}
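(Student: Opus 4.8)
The plan is to realize $c'$ directly as the three-cube pullback $\hat f\circ\alpha_{i+1}$ of a well-chosen cube-preserving map $\hat f\colon T_{i+1}\to X$, so that lemma \ref{threelem} hands us $c'\in C^{i+1}(X)$ once cube-preservation is checked. The guiding idea is that, among the maximal cells $\alpha_{i+1}(u)$ of $T_{i+1}$, every slice will be a $\mathcal{G}$-pullback of $c$ except for a single exceptional cell, whose slice can be arranged to be exactly the spike cube produced by lemma \ref{equivsimp}. That exceptional cell is the only place where the hypothesis $x\sim_i y$ is consumed.

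Concretely, I would begin with the map $c\circ{\rm flat}_{i+1}$ on $T_{i+1}$, which is cube preserving by lemma \ref{foldlem}, and then modify it at the single maximal vertex $\alpha_{i+1}(0^{i+1})=(1,0,1,0,\dots,1,0)$, resetting its value from $c({\rm flat}_{i+1}(\alpha_{i+1}(0^{i+1})))=c(0^{i+1})=y$ to $x$. Denote the resulting map by $\hat f$. Since ${\rm flat}_{i+1}\circ\alpha_{i+1}$ is the identity and $\alpha_{i+1}$ is injective, one reads off immediately that $\hat f\circ\alpha_{i+1}(w)=c(w)$ for $w\neq 0^{i+1}$ and $\hat f\circ\alpha_{i+1}(0^{i+1})=x$, i.e. $\hat f\circ\alpha_{i+1}=c'$. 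Thus it remains only to prove that $\hat f$ is cube preserving and then apply lemma \ref{threelem}.

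For cube-preservation it suffices to treat the maximal cells $\alpha_{i+1}(u)$, $u\in\{0,1\}^{i+1}$. The key combinatorial point is that $\supp{\alpha_{i+1}(0^{i+1})}$ is the set of all odd coordinates, so it is contained in $\supp{\alpha_{i+1}(u)}$ precisely when $u=0^{i+1}$; hence the modified vertex lies in the image of $s_{\alpha_{i+1}(u)}$ only for $u=0^{i+1}$. For $u\neq 0^{i+1}$ the modification is invisible to that slice, so $\hat f\circ s_{\alpha_{i+1}(u)}=c\circ({\rm flat}_{i+1}\circ s_{\alpha_{i+1}(u)})$, which by the formula for ${\rm flat}_{i+1}\circ s_v$ in lemma \ref{foldlem} is the composition of $c$ with a morphism in $\mathcal{G}$, hence a cube by the presheaf axiom. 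For $u=0^{i+1}$ the morphism ${\rm flat}_{i+1}\circ s_{\alpha_{i+1}(0^{i+1})}$ is constant equal to $0^{i+1}$, so the unmodified slice is the constant $y$ cube, and the single reset at the top vertex $1^{i+1}$ turns it into the function that is $x$ at $1^{i+1}$ and $y$ on $\{0,1\}^{i+1}_*$. This is exactly the cube furnished by lemma \ref{equivsimp} from $x\sim_i y$, so this slice too lies in $C^{i+1}(X)$.

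The step I expect to demand the most care is this final one, namely recognizing the exceptional slice as the spike cube of lemma \ref{equivsimp} rather than as an object needing the stronger (and unavailable) relation $x\sim_{i+1}y$. The naive alternative of gluing $c$ and $c'$ as opposite facets of an $(i+2)$-cube fails for exactly this reason: it forces the value change to sit at the global top $1^{i+2}$ and would require $x\sim_{i+1}y$. Routing the construction through ${\rm flat}_{i+1}$ on $T_{i+1}$ is what confines the $\sim_i$-level spike to one maximal cell while leaving all other cells as harmless $\mathcal{G}$-pullbacks of $c$.
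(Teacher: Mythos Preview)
Your proposal is correct and is essentially identical to the paper's own proof: the paper also sets $f$ to be $c\circ{\rm flat}_{i+1}$ modified at the single vertex $w=\alpha_{i+1}(0^{i+1})$ to take the value $x$, verifies cube-preservation of $f$ on $T_{i+1}$ (using lemma~\ref{equivsimp} for the exceptional maximal cell), and concludes $c'=f\circ\alpha_{i+1}\in C^{i+1}(X)$ via lemma~\ref{threelem}. Your write-up is in fact more explicit than the paper's about why the non-exceptional slices are cubes (via lemma~\ref{foldlem} and the presheaf axiom) and about the combinatorics pinning down which maximal cell contains the modified vertex.
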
 

\begin{proof} Let $c_2:=c\circ{\rm flat}_{i+1}$ and $w=\alpha_{i+1}(0^{i+1})$. Let $f:T_{i+1}\to X$ be such that $f(v)=c_2(v)$ if $v\neq w$ and $f(w)=x$. Notice that $f\circ s_v$ is in $C^{i+1}(X)$ for $v\in T'_{i+1}$ by lemma \ref{equivsimp} and thus $f$ is cube preserving. Observe that $c'=f\circ\alpha_{i+1}$. Then lemma \ref{threelem} finishes the proof.
\end{proof}

This lemma implies the next corollary by symmetry described in lemma \ref{transim}.

\begin{corollary}\label{changequiv} Assume that $x\sim_i y$, $w\in\{0,1\}^{i+1}$ and that $c\in C^{i+1}(X)$ satisfies $c(w)=y$. Let $c':\{0,1\}^{i+1}\to X$ be the function such that $c'(v)=c(v)$ if $v\neq w$ and $c'(w)=x$. Then $c'\in C^{i+1}(X)$.
\end{corollary}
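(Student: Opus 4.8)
The plan is to reduce the arbitrary-vertex statement to the special case $w=0^{i+1}$ already established in lemma \ref{eqchange-1}, transporting the configuration by a reflection that carries $0^{i+1}$ to $w$. By lemma \ref{transim} the reflection group $R_{i+1}$ acts transitively on $\{0,1\}^{i+1}$, so I would choose $\gamma\in R_{i+1}$ with $\gamma(0^{i+1})=w$. Since $R_{i+1}\cong(\mathbb{Z}/2\mathbb{Z})^{i+1}$, every element is an involution; in particular $\gamma^2$ is the identity and hence $\gamma(w)=0^{i+1}$ as well. Crucially $\gamma\in\hom_{\mathcal{G}}(\{0,1\}^{i+1},\{0,1\}^{i+1})$, so the presheaf axiom lets me precompose cubes with $\gamma$ while staying inside $C^{i+1}(X)$.

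First I would form $c\circ\gamma$, which lies in $C^{i+1}(X)$ by the presheaf axiom and satisfies $(c\circ\gamma)(0^{i+1})=c(w)=y$. This is exactly a cube fitting the hypotheses of lemma \ref{eqchange-1} (with the same pair $x\sim_i y$). Applying that lemma produces a cube $d\in C^{i+1}(X)$ with $d(v)=(c\circ\gamma)(v)$ for all $v\neq 0^{i+1}$ and $d(0^{i+1})=x$.

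Finally I would transport back by setting $c':=d\circ\gamma$, which again belongs to $C^{i+1}(X)$ by the presheaf axiom, and then verify that this $d\circ\gamma$ coincides with the function $c'$ in the statement. At $v=w$ we get $(d\circ\gamma)(w)=d(\gamma(w))=d(0^{i+1})=x$, as required. For $v\neq w$, the bijectivity of $\gamma$ gives $\gamma(v)\neq\gamma(w)=0^{i+1}$, whence $(d\circ\gamma)(v)=(c\circ\gamma)(\gamma(v))=c(\gamma^2(v))=c(v)$, again matching the prescribed value. Thus $d\circ\gamma=c'$ and $c'\in C^{i+1}(X)$.

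Since every step is a direct appeal to the presheaf axiom, lemma \ref{transim}, or lemma \ref{eqchange-1}, I do not anticipate a genuine obstacle. The only point deserving care is that the chosen $\gamma$ is simultaneously its own inverse, so that a single reflection both sends $0^{i+1}$ to $w$ for the forward transport and sends $w$ back to $0^{i+1}$ when checking the off-$w$ values of $c'$; this is precisely what the involutivity of elements of $R_{i+1}$ guarantees.
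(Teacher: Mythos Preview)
Your argument is correct and is exactly the approach the paper intends: the paper simply states that the corollary follows from lemma \ref{eqchange-1} ``by symmetry described in lemma \ref{transim}'', and you have spelled out precisely that reduction via an involutive reflection $\gamma\in R_{i+1}$ carrying $0^{i+1}$ to $w$.
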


\begin{lemma}\label{changequiv2} Let $X$ be a groupspace, $i\in\mathbb{N}$ and $c\in C^{i+1}(X)$. Then if a function $f:\{0,1\}^{i+1}\to X$ satisfies that $f(v)\sim_i c(v)$ holds for every $v\in\{0,1\}^{i+1}$ then $f\in C^{i+1}(X)$.
\end{lemma}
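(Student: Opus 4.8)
The plan is to transform $c$ into $f$ one vertex at a time, changing a single value at each step while remaining inside $C^{i+1}(X)$ through repeated application of Corollary \ref{changequiv}.

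First I would fix an enumeration $w_1,w_2,\dots,w_N$ of the $N=2^{i+1}$ vertices of $\{0,1\}^{i+1}$ in any order. I then build a sequence of functions $c_0,c_1,\dots,c_N:\{0,1\}^{i+1}\to X$ by setting $c_0:=c$ and, for $1\le j\le N$, letting $c_j$ be the function that agrees with $c_{j-1}$ at every vertex except $w_j$, where its value is changed to $f(w_j)$. By construction $c_j(w)=f(w)$ for $w\in\{w_1,\dots,w_j\}$ and $c_j(w)=c(w)$ otherwise, so that $c_N=f$.

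The key step is to verify by induction on $j$ that $c_j\in C^{i+1}(X)$. The base case $c_0=c\in C^{i+1}(X)$ is the hypothesis. For the inductive step, suppose $c_{j-1}\in C^{i+1}(X)$. Since the vertex $w_j$ has not been modified in any earlier step, we have $c_{j-1}(w_j)=c(w_j)$, and the hypothesis of the lemma gives $f(w_j)\sim_i c(w_j)=c_{j-1}(w_j)$. Applying Corollary \ref{changequiv} with $w:=w_j$, $y:=c_{j-1}(w_j)$ and $x:=f(w_j)$ shows that $c_j$, which differs from $c_{j-1}$ only at $w_j$ where its value is $x$, lies in $C^{i+1}(X)$. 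This closes the induction, and taking $j=N$ yields $f=c_N\in C^{i+1}(X)$.

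There is no real obstacle here: the entire content is the observation that each single-vertex change is legitimate because, at the moment we touch $w_j$, the current cube still carries the original value $c(w_j)$ to which $f(w_j)$ is $\sim_i$-related. The only point requiring care is maintaining this invariant, i.e.\ that each vertex is modified exactly once, so that the relation $f(w_j)\sim_i c_{j-1}(w_j)$ needed to invoke the corollary is precisely the given relation $f(w_j)\sim_i c(w_j)$.
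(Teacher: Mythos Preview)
Your proof is correct and follows exactly the same approach as the paper: repeatedly apply Corollary \ref{changequiv} to change the value of $c$ at each vertex one at a time until reaching $f$. Your version simply spells out in detail (with the explicit enumeration and induction) what the paper states in a single sentence.
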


\begin{proof} The statement follows by repeatedly applying corollary \ref{changequiv} for every vertex $v\in\{0,1\}^{i+1}$ and changing the values of $c$ from $c(v)$ to $f(v)$ step by step. 
\end{proof}

\begin{lemma}\label{changequiv3} Let $X$ be a groupspace, $i\in\mathbb{N}$, $j\leq i+1$ and $c\in C^j(X)$. Then if a function $f:\{0,1\}^j\to X$ satisfies that $f(v)\sim_i c(v)$ holds for every $v\in\{0,1\}^j$ then $f\in C^j(X)$.
\end{lemma}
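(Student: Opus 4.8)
The plan is to reduce the statement to the already-established case $j = i+1$, namely Lemma \ref{changequiv2}, by lifting everything to dimension $i+1$ through the projection characterization of cubes provided by Lemma \ref{down-det}. The point is that the hypothesis $f(v) \sim_i c(v)$ is a purely vertexwise condition, so it is transported verbatim along any reindexing of the coordinate cube.

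First I would introduce the projection $\phi:\{0,1\}^{i+1}\to\{0,1\}^j$ given by $\phi(x_1,\dots,x_{i+1}) = (x_1,\dots,x_j)$, which is exactly the map appearing in Lemma \ref{down-det}. Since $c\in C^j(X)$ and $j\leq i+1$, that lemma yields $c\circ\phi\in C^{i+1}(X)$. Next I would observe that precomposing with $\phi$ preserves the pointwise $\sim_i$ relation: for every $w\in\{0,1\}^{i+1}$ we have $\phi(w)\in\{0,1\}^j$, and therefore $(f\circ\phi)(w) = f(\phi(w)) \sim_i c(\phi(w)) = (c\circ\phi)(w)$.

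Now I would apply Lemma \ref{changequiv2} to the cube $c\circ\phi\in C^{i+1}(X)$ together with the function $f\circ\phi$, which satisfies $(f\circ\phi)(w)\sim_i (c\circ\phi)(w)$ for every $w\in\{0,1\}^{i+1}$. This gives $f\circ\phi\in C^{i+1}(X)$. Finally, invoking the converse direction of Lemma \ref{down-det}, from $f\circ\phi\in C^{i+1}(X)$ I conclude that $f\in C^j(X)$, which is the assertion.

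Since the substantive work, namely changing a single coordinate value within its $\sim_i$ class (Lemma \ref{equivsimp} and Corollary \ref{changequiv}) and then iterating over all vertices (Lemma \ref{changequiv2}), has already been carried out, there is no genuine obstacle remaining. The only point demanding care is to apply the two directions of Lemma \ref{down-det} consistently, using the forward direction to lift $c$ and the backward direction to descend the conclusion about $f$.
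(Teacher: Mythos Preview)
Your proof is correct and is precisely the argument the paper intends: its own proof of this lemma consists of the single sentence ``The proof is a direct consequence of lemma \ref{down-det} and lemma \ref{changequiv2},'' and you have simply spelled out those details.
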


\begin{proof} The proof is a direct consequence of lemma \ref{down-det} and lemma \ref{changequiv2}.
\end{proof}

\begin{lemma}\label{sim-cubequiv} Let $X$ be a groupspace, $i,n\in\mathbb{N}$. Assume that $c_1,c_2\in C^n(X)$ satisfy that $c_1(v)\sim_i c_2(v)$ holds for every $v$ with $h(v)\leq i$. Then $c_1(v)\sim_i c_2(v)$ holds for every $v\in\{0,1\}^n$.
\end{lemma}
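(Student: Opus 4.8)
The plan is to prove the stronger statement $P(m)$: \emph{$c_1(v)\sim_i c_2(v)$ for all $v\in\{0,1\}^n$ with $h(v)\le m$}, by induction on $m$ running from $m=i$ up to $m=n$. The base case $P(i)$ is exactly the hypothesis of the lemma, and $P(n)$ is the desired conclusion (if $n\le i$ there is nothing to prove). So I fix $m$ with $i\le m<n$, assume $P(m)$, and prove $c_1(v)\sim_i c_2(v)$ for an arbitrary $v$ with $h(v)=m+1$.

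The key device is to restrict both cubes to an $(i+1)$-dimensional face whose top corner is $v$ and whose remaining corners all have height at most $m$. Concretely, I would choose any $(i+1)$-element subset $S=\{b_1<\dots<b_{i+1}\}$ of $\supp{v}$ (possible since $h(v)=m+1\ge i+1$), and define $\phi:\{0,1\}^{i+1}\to\{0,1\}^n$ by letting the $j$-th input coordinate drive the $b_j$-th output coordinate (identity dependence), fixing the coordinates in $\supp{v}\setminus S$ to $1$, and fixing all coordinates outside $\supp{v}$ to $0$. Since $b_1<\dots<b_{i+1}$, the associated function $\gamma_\phi$ is increasing on its support, so $\phi\in\hom_{\mathcal{G}}(\{0,1\}^{i+1},\{0,1\}^n)$. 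One then checks $h(\phi(u))=h(u)+(m-i)$: the $m-i$ fixed-$1$ coordinates always contribute $m-i$, so the top vertex $1^{i+1}$ maps to $v$ (height $m+1$), while every $u\ne 1^{i+1}$ has $h(u)\le i$ and hence $h(\phi(u))\le m$. By the presheaf axiom, $d_1:=c_1\circ\phi$ and $d_2:=c_2\circ\phi$ lie in $C^{i+1}(X)$.

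By $P(m)$, for every $u\ne 1^{i+1}$ we have $d_1(u)=c_1(\phi(u))\sim_i c_2(\phi(u))=d_2(u)$, while $d_1(1^{i+1})=c_1(v)$ and $d_2(1^{i+1})=c_2(v)$. The remaining point is that Definition \ref{simdef} of $\sim_i$ demands \emph{exact} agreement, not merely $\sim_i$-agreement, off the top vertex; this is the one spot requiring care. I would remedy it by defining $d_2':\{0,1\}^{i+1}\to X$ to equal $d_1$ on $\{0,1\}^{i+1}_*$ and to take the value $c_2(v)$ at $1^{i+1}$. Then $d_2'(u)\sim_i d_2(u)$ for every $u$ (using $P(m)$ off the top and reflexivity at the top), so Lemma \ref{changequiv2} gives $d_2'\in C^{i+1}(X)$. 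Now $d_1,d_2'\in C^{i+1}(X)$ agree on $\{0,1\}^{i+1}_*$ and satisfy $d_1(1^{i+1})=c_1(v)$, $d_2'(1^{i+1})=c_2(v)$, which is exactly the data Definition \ref{simdef} requires to conclude $c_1(v)\sim_i c_2(v)$, closing the induction. The main obstacle is the construction of the monotone morphism $\phi$: the groupspace setting, unlike nilspaces, forbids reordering coordinates, so the face must be chosen respecting the coordinate order, and then combined with the upgrade from $\sim_i$-agreement to exact agreement via Lemma \ref{changequiv2}.
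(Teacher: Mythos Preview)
Your proof is correct and uses the same core mechanism as the paper's: restrict $c_1,c_2$ to an $(i+1)$-dimensional face hitting the vertex in question, use Lemma \ref{changequiv2} to upgrade $\sim_i$-agreement off the top vertex to exact agreement, and then read off $c_1(v)\sim_i c_2(v)$ from Definition \ref{simdef}. The only organizational difference is that the paper runs the outer induction on the cube dimension $n$ (using the face embeddings $e^n_{k,0}$ to reduce to $n-1$ for non-top vertices, and the specific morphism $(x_1,\dots,x_{i+1})\mapsto(x_1,\dots,x_{i+1},1,\dots,1)$ for $1^n$), whereas you run the induction on the height level $m$ and construct a tailored $\mathcal{G}$-morphism $\phi$ for every vertex of height $m+1$; your version is a bit more uniform in that it does not single out the top vertex.
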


\begin{proof} We go by induction on $n$. If $n\leq i$ then the statement is trivial. If $n=i+1$ then let $c'_2:\{0,1\}^n\to X$ be such that $c_2'(v)=c_1(v)$ for $v\neq 1^n$ and $c'_2(1^n)=c_2(1^n)$. We have that $c'_2(v)\sim_i c_2$ holds for every $v\in\{0,1\}^n$ and thus by lemma \ref{changequiv2} we have that $c_2'\in C^n(X)$. By the definition of $\sim_i$ we have that $c_2'(1^n)\sim_i c_1(1^n)$ and so $c_2(1^n)\sim_i c_1(1^n)$ holds. This proves the claim for $n=i+1$.
Assume that the statement holds for $n-1\geq i+1$. Then, by applying the induction hypothesis to $c_1\circ e^n_{k,0}$ and $c_2\circ e^n_{k,0}$ for every $k\in [n]$ we obtain that $c_1(v)\sim_i c_2(v)$ holds for $h(v)<n$. It remains to show that $c_1(1^n)\sim_i c_2(1^n)$ holds. Let $f:\{0,1\}^{i+1}\to\{0,1\}^n$ be the morphism defined by $f(x_1,x_2,\dots,x_{i+1})=(x_1,x_2,\dots,x_{i+1},1,1,\dots,1)$. Then applying the case $n=i+1$ for $c_1\circ f$ and $c_2\circ f$ the proof is complete. 
\end{proof}

\begin{theorem}\label{eqmain} Let $X$ be a nilspace and let $i\in\mathbb{N}$. Then $\sim_i$ is a groupspace congruence and $X/\sim_i$ is an $i$-step cubespace.  
\end{theorem}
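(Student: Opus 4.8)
The plan is to verify the three groupspace axioms for the push-forward structure on $X/\sim_i$ and then the $i$-step property; since presheaf and ergodicity hold automatically for any push-forward structure (as already observed after the definition of congruence), the whole statement reduces to the completion axiom together with uniqueness of completion at dimension $i+1$. Throughout I write $m:X\to X/\sim_i$ for the factor map and use that, by definition of the push-forward structure, every element of $C^n(X/\sim_i)$ has the form $m\circ c$ with $c\in C^n(X)$, while two points of $X$ have the same image under $m$ exactly when they are $\sim_i$-equivalent.

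The conceptually decisive first step is a \textbf{rigidity lemma}: for every $h\ge i+1$, if $\bar c_1,\bar c_2\in C^h(X/\sim_i)$ agree on $\{0,1\}^h_*$, then they agree at $1^h$ as well. To prove it I would lift $\bar c_s=m\circ c_s$ with $c_s\in C^h(X)$; agreement on $\{0,1\}^h_*$ means $c_1(v)\sim_i c_2(v)$ for every $v$ with $h(v)\le h-1$, in particular for every $v$ with $h(v)\le i$, so Lemma \ref{sim-cubequiv} forces $c_1(1^h)\sim_i c_2(1^h)$, i.e. $\bar c_1(1^h)=\bar c_2(1^h)$. The case $h=i+1$ is precisely the statement that completion in $X/\sim_i$ is unique at dimension $i+1$, which is the $i$-step property; the cases $h>i+1$ will be the engine driving the high-dimensional part of the completion axiom.

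For the completion axiom, let $\bar f:\{0,1\}^n_*\to X/\sim_i$ be a corner. Noting that $\{0,1\}^n_*$ is a simplicial set and that $\bar f$ is then cube-preserving in the sense of Lemma \ref{simpglue}, the strategy is to build a lift $f:\{0,1\}^n_*\to X$ which is simultaneously a corner in $X$ and satisfies $m\circ f=\bar f$; applying the completion axiom of $X$ (equivalently Lemma \ref{simpglue}) to $f$ then yields $\phi\in C^n(X)$ with $m\circ\phi$ completing $\bar f$. I would construct $f$ vertex by vertex in two regimes. On the \textbf{low skeleton} $\{v:h(v)\le i+1\}$ I mimic the inductive gluing of Lemma \ref{simpglue}, adjoining one maximal vertex $v$ (of height $\le i+1$) at a time: the quotient cube $\bar f\circ s_v$ lifts to some $c^*\in C^{h(v)}(X)$, and since the partial lift already agrees with $c^*$ modulo $\sim_i$ on the body $\{0,1\}^{h(v)}_*$, Lemma \ref{changequiv3} (available because $h(v)\le i+1$) lets me replace the body of $c^*$ by the partial lift while keeping a genuine cube; setting $f(v)$ equal to its top value forces $m(f(v))=\bar f(v)$. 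On the \textbf{high part} $\{v:h(v)\ge i+2\}$, processed in order of increasing height, I instead simply apply the completion axiom of $X$ to the already-built sub-corner $f\circ s_v|_{\{0,1\}^{h(v)}_*}$ and let $f(v)$ be the resulting top value. Here I can no longer adjust values by hand, but I do not need to: $m\circ(f\circ s_v)$ and the given cube $\bar f\circ s_v$ are two completions in $X/\sim_i$ of the same corner at dimension $h(v)\ge i+2>i+1$, so the rigidity lemma forces them to coincide and hence $m(f(v))=\bar f(v)$ automatically.

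The main obstacle is exactly this tension in the high-dimensional regime. In dimensions $\le i+1$, Lemmas \ref{changequiv2}--\ref{changequiv3} allow one to move the vertices of a cube freely within their $\sim_i$-classes, and this is what makes the low skeleton manageable; but once the dimension exceeds $i+1$ no such freedom is available (a single $\sim_i$-change of one high vertex cannot in general be realized by pulling back an $(i+1)$-dimensional equivalence, so the naive higher-dimensional analogue of Lemma \ref{changequiv2} is unavailable). The resolution above circumvents this by never adjusting high values: the completion axiom of $X$ is allowed to choose them, and the rigidity coming from Lemma \ref{sim-cubequiv} guarantees the choice is compatible with $\bar f$. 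Once $f$ is built I would check it is cube-preserving (each $f\circ s_v$ is a cube by construction, and the faces of each sub-corner are cubes by induction on height), complete it inside $X$, and push forward. Together with the rigidity lemma at $h=i+1$, this shows $X/\sim_i$ is a groupspace in which completion is unique at dimension $i+1$, i.e. an $i$-step groupspace, so $\sim_i$ is a congruence as claimed.
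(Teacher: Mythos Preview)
Your argument is correct, but the paper's proof of the completion axiom is shorter and avoids the two-regime construction. Rather than building a full cube-preserving lift of the corner $\bar f$ on all of $\{0,1\}^n_*$, the paper lifts $\bar f$ \emph{arbitrarily} on the smaller simplicial set $S=\{v\in\{0,1\}^n_*:h(v)\le i\}$, observes (via Lemma~\ref{changequiv3}, which applies since every sub-cube there has dimension $\le i$) that any such lift is automatically cube-preserving on $S$, and then applies Lemma~\ref{simpglue} once to obtain a full cube $c\in C^n(X)$ extending it. The verification that $m\circ c$ agrees with $\bar f$ on all of $\{0,1\}^n_*$ is then carried out in a single stroke: for each $j$ one compares $c\circ e^n_{j,0}$ with any lift of $\bar f\circ e^n_{j,0}$; they agree modulo $\sim_i$ on vertices of height $\le i$ by construction, hence everywhere by Lemma~\ref{sim-cubequiv}. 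This sidesteps both your low-skeleton adjustment step and your high-part induction entirely. Your approach has the conceptual merit of producing a corner lift that projects \emph{exactly} to $\bar f$, at the cost of more bookkeeping; the paper's approach tolerates a lift that is only correct on low heights and lets Lemma~\ref{sim-cubequiv} absorb the discrepancy afterward. For the $i$-step property your direct appeal to Lemma~\ref{sim-cubequiv} (your ``rigidity lemma'' at $h=i+1$) is actually slightly cleaner than the paper's route, which instead passes through Lemma~\ref{changequiv2} and the definition of $\sim_i$; but the two arguments are equivalent.
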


\begin{proof} We denote the factor map $X\to X/\sim_i$ by $m$. To show that $X/\sim_i$ is a groupspace. We need to check the completion axiom. Let $n\in\mathbb{N}$ and let $f:\{0,1\}^n_*\to X/\sim_i$ be such that $f\circ e^n_{j,0}\in C^{n-1}(X/\sim_i)$ for every $j\in [n]$. Let $S$ be the set of elements $v$ in $\{0,1\}^n_*$ with $h(v)\leq i$. We have that $S$ is a simplicial set. Let $f':S\to X$ be such that $m\circ f'=f|_S$ holds. It follows from the assumption on $f$ and lemma \ref{changequiv3} that $f'$ is cube preserving. Then by lemma \ref{simpglue} we obtain that there is $c\in C^n(X)$ such that $c|_S=f'|_S$. It follows by lemma \ref{sim-cubequiv} that $m\circ c\circ e^n_{j,0}(v)= f\circ e^n_{j,0}(v)$ holds for every $v\in\{0,1\}^{n-1}$ and so $(m\circ c)(v)=f(v)$ holds for $v\in\{0,1\}^n_*$. This shows that $m\circ c\in C^n(X)$ is an extension of $f$.

Now we show that $X/\sim_i$ is $i$-step. Let $c_1,c_2\in C^{i+1}(X/\sim_i)$ such that their restrictions to $\{0,1\}^{i+1}_*$ is the same. This means that there are cubes $c_1',c_2'\in C^{i+1}(X)$ such that $m\circ c_1'=c_1$ and $m\circ c_2'=c_2$ and $c_1'(v)\sim_i c_2'(v)$ holds for $v\in\{0,1\}^{i+1}_*$. Let $c_3:\{0,1\}^{i+1}\to X$ be such that $c_3(v)=c_2'(v)$ if $v\in\{0,1\}^{i+1}_*$ and $c_3(1^{i+1})=c_1'(v)$.  Then $c_3(v)\sim_i c_1'(v)$ holds for every $v\in\{0,1\}^{i+1}$ and so by lemma \ref{changequiv2} we have that $c_3\in C^{i+1}(X)$. Since $c_3$ and $c_2'$ agrees on $\{0,1\}^{i+1}_*$ we have by definition that $c_3(1^{i+1})\sim_i c_2'(1^{i+1})$ and thus $c_1(1^{i+1})=c_2(1^{i+1})$.
\end{proof}


\section{Groupspaces as iterated principal bundles}

In this section we show how groupspaces give rise to iterated principal bundles. Let $X$ be a groupspace and for $i\in\mathbb{N}$ let $X_i:=X/\sim_i$. Let $\pi_i:X\to X_i$ denote the factor map from $X$ to $X_i$. Note that $X_0$ is a $1$ point space. We have that $X_i$ is the unique largest $i$-step factor of $X$. Since the equivalence relations $\sim_i$ are increasingly finer as $i$ grows (see lemma \ref{finer}), there are natural factor maps $\pi_{i,j}:X_i\to X_j$ whenever $j\leq i$. These factor maps satisfy $\pi_{j,k}\circ\pi_{i,j}=\pi_{i,k}$ for $k\leq j\leq i$. If $X$ is $k$-step then we have that $\sim_k$ has one element classes and thus $X_k=X$. In this casde we have that $\pi_i=\pi_{k,i}$ holds for $0\leq i\leq k$. 

\begin{lemma}\label{fiber-lem} Let $X$ be a $k$-step groupspace and let $A$ be a $\sim_{k-1}$-class of $X$. Then $A$ is a $k$-step, $k$-ergodic groupspace with the cubic structure $$C^i(A):=\{c~:~c\in C^i(X),c(v)\in A~\forall v\in\{0,1\}^i\}.$$
\end{lemma}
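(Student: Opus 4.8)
The plan is to verify the three groupspace axioms for $A$ together with the $k$-step and $k$-ergodicity properties, using throughout that a function $c:\{0,1\}^i\to A$ lies in $C^i(A)$ exactly when $c\in C^i(X)$. I first record that for every $a\in X$ and every $n$ the constant function $v\mapsto a$ lies in $C^n(X)$: it is the composite of the $0$-cube $a\in C^0(X)$ with the (vacuously $\mathcal{G}$-)morphism $\{0,1\}^n\to\{0,1\}^0$, so the presheaf axiom applies. The presheaf axiom for $A$ is then immediate: if $\phi\in\hom_{\mathcal{G}}(\{0,1\}^n,\{0,1\}^m)$ and $\psi\in C^m(A)$, then $\psi\circ\phi\in C^n(X)$ since $X$ is a groupspace, and $\psi\circ\phi$ remains $A$-valued, so $\psi\circ\phi\in C^n(A)$. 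Likewise $C^1(X)=X^{\{0,1\}}$ forces $C^1(A)=A^{\{0,1\}}$, giving ergodicity.

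Next I would prove $k$-ergodicity, $C^k(A)=A^{\{0,1\}^k}$. Fix $a\in A$ and let $c_0\in C^k(X)$ be the constant $a$ cube. For any $c:\{0,1\}^k\to A$ and every $v$ we have $c(v)\sim_{k-1}a=c_0(v)$, because $A$ is a single $\sim_{k-1}$-class; lemma \ref{changequiv2} (with $i=k-1$) then yields $c\in C^k(X)$, i.e. $c\in C^k(A)$. Combined with the presheaf axiom this upgrades to $C^n(A)=A^{\{0,1\}^n}$ for all $n\le k$: an arbitrary $f:\{0,1\}^n\to A$ is the restriction $(f\circ p)\circ\iota$ of the $k$-cube $f\circ p\in C^k(A)$, where $p:\{0,1\}^k\to\{0,1\}^n$ is the projection onto the first $n$ coordinates and $\iota(x)=(x,0^{k-n})$, both morphisms in $\mathcal{G}$. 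In particular completion is trivial for $n\le k$, as every $A$-valued function is already a cube.

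The main obstacle is the completion axiom for $n\ge k+1$. Let $f:\{0,1\}^n_*\to A$ satisfy $f\circ e^n_{j,0}\in C^{n-1}(A)\subseteq C^{n-1}(X)$ for every $j$. The completion axiom in $X$ produces some $\phi\in C^n(X)$ extending $f$, and the only thing to check is that its apex $\phi(1^n)$ lies in $A$. Here I compare $\phi$ with the constant $a$ cube $c_0\in C^n(X)$: for every $v$ with $h(v)\le k-1$ we have $v\ne 1^n$ (since $n\ge k+1$), hence $\phi(v)=f(v)\in A$ and so $\phi(v)\sim_{k-1}a=c_0(v)$. Lemma \ref{sim-cubequiv} (with $i=k-1$) propagates this relation to all vertices, so in particular $\phi(1^n)\sim_{k-1}c_0(1^n)=a$ and therefore $\phi(1^n)\in A$. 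Thus $\phi$ is $A$-valued, $\phi\in C^n(A)$, and the corner is completed.

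Finally, $A$ inherits the $k$-step property from $X$: if $\phi_1,\phi_2\in C^{k+1}(A)\subseteq C^{k+1}(X)$ agree on $\{0,1\}^{k+1}_*$, then $\phi_1=\phi_2$ because $X$ is $k$-step, so completions at dimension $k+1$ are unique in $A$. Together these steps show that $A$ is a $k$-step, $k$-ergodic groupspace.
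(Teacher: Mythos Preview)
Your argument is correct and follows essentially the same line as the paper's proof: both trivialize the low-dimensional completion by showing every $A$-valued function of dimension $\le k$ is already a cube (via lemma~\ref{changequiv2}/\ref{changequiv3} against a constant cube), both inherit the $k$-step property directly from $X$, and both handle completion for $n\ge k+1$ by completing the corner in $X$ and then arguing that the apex must land in $A$. The only cosmetic difference is in this last step: the paper projects to the $(k-1)$-step factor $X_{k-1}$ and invokes uniqueness of completion there, whereas you stay inside $X$ and use lemma~\ref{sim-cubequiv} with $i=k-1$ to propagate the relation $\phi(v)\sim_{k-1}a$ from the low-height vertices to $1^n$; these are two phrasings of the same mechanism.
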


\begin{proof} We start with checking the groupspace axioms. The first two axioms are trivial. Assume that $f:\{0,1\}^i_*\to A$ is a function satisfying the conditions of the completion axiom. We distinguish between two cases. In the first case $i\leq k$. In this case let $a\in A$ be arbitrary and let $c:\{0,1\}^i\to A$ be the function which satisfies $c(1^i)=a$ and $c(v)=f(v)$ for $v\in\{0,1\}^i_*$. We have that $c(v)\sim_{k-1} a$ holds for every $v\in\{0,1\}^i$ and thus applying lemma \ref{changequiv3} for the $i$-dimesional constant $a$ cube and $c$ we obtain that $c\in C^i(A)$ holds. This verifies the completion axiom. For the second case assume that $i>k$. In this case let $c\in C^i(X)$ be the cube guaranteed by the completion axiom in $X$ when applied to $f$. Notice that $f\circ\pi_{k-1}$ is a constant function in $C^i(X_{k-1})$ and thus, since $X/\sim_{k-1}$ is $k-1$-step we have that $c\circ\pi_{k-1}$ is also constant. This proves that $c\in C^i(A)$ holds and the completion axiom is proved. 

The fact that $A$ is $k$-step follows directly from the fact that $X$ is $k$-step and $A$ has fewer cubes. To see that $A$ is $k$-ergodic observe that every map $c:\{0,1\}^k\to A$ is congruent with the constant $a\in A$ cube modulo $\sim_{k-1}$. It follows from lemma \ref{changequiv3} that $c\in C^k(A)$.
\end{proof}

\begin{definition} Let $X$ be a $k$-step groupspace. A fiber of $X$ is an equivalence class $F$ of $\sim_{k-1}$ with the inherited cubic structure defined in lemma \ref{fiber-lem}.
\end{definition}

\begin{lemma}\label{eqcubeinfiber} Let $X$ be a $k$-step groupspace for $k\geq 2$ and let $F_1,F_2$ be two fibers of $X$. Let $c_1\in C^k(F_1)$. Then there exists $c_2\in C^k(F_2)$ such that $c_1\thickapprox_1 c_2$ holds.
\end{lemma}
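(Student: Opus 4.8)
The plan is to construct a single cube $c_3\in C^{k+1}(X)$ whose face $c_3\circ e^{k+1}_{1,0}$ equals $c_1$ and whose opposite face $c_2:=c_3\circ e^{k+1}_{1,1}$ lies in $C^k(F_2)$; this is exactly the relation $c_1\thickapprox_1 c_2$. Write points of $\{0,1\}^{k+1}$ as $(\varepsilon,w)$ with $\varepsilon\in\{0,1\}$ and $w\in\{0,1\}^k$, and let $p_1:=\pi_{k-1}(F_1)$, $p_2:=\pi_{k-1}(F_2)$ be the two points of $X_{k-1}$ carrying the fibers. One should not expect to transport $c_1$ along an arbitrary lift of the connecting $1$-cube: if a $(k+1)$-cube has both $\varepsilon$-faces inside $F_1$ then all its vertices lie in $F_1$, so inside $F_1$ the cube $c_1$ is $\thickapprox_1$-equivalent only to cubes with the same $w$-invariant (using $F_1\cong\mathcal D_k(A)$), and this invariant cannot be repaired by altering a single top-dimensional cube. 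The way around this is to build the whole connecting cube at once and to verify cube preservation cheaply, using that \emph{below} dimension $k+1$ values may be moved freely within $\sim_{k-1}$-classes.

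Fix $a_1\in F_1$, $a_2\in F_2$ and let $d\in C^1(X)$ be the $1$-cube with $d(0)=a_1$, $d(1)=a_2$ (ergodicity axiom). I would then define $f:\{0,1\}^{k+1}_*\to X$ on the corner by $f(0,w):=c_1(w)$ and $f(1,w):=a_2$ for $(1,w)\neq 1^{k+1}$. The heart of the argument is to check that $f$ is cube preserving, i.e.\ $f\circ s_v\in C^{h(v)}(X)$ for every $v\in\{0,1\}^{k+1}_*$. For $v$ with first coordinate $0$ this is immediate, as $f\circ s_v$ is a face of $c_1$ and hence a cube by the presheaf axiom. For $v$ with first coordinate $1$ I set $h:=h(v)\le k$ and compare $f\circ s_v$ with the simple cube $q:=d\circ p^h_1\in C^h(X)$ (presheaf axiom, since $p^h_1\in\hom_{\mathcal G}$). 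Because the first coordinate is the smallest element of $\supp{v}$, one has $(s_v(u))_1=u_1$; hence for $u_1=0$ the value $f(s_v(u))$ lies in $F_1$, as does $q(u)=a_1$, while for $u_1=1$ one has $f(s_v(u))=a_2=q(u)$. Thus $f(s_v(u))\sim_{k-1}q(u)$ for all $u\in\{0,1\}^h$, and since $h\le k=(k-1)+1$, Lemma \ref{changequiv3} yields $f\circ s_v\in C^h(X)$.

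With $f$ cube preserving, Lemma \ref{simpglue} extends it to $c_3\in C^{k+1}(X)$ with $c_3|_{\{0,1\}^{k+1}_*}=f$. Then $c_3\circ e^{k+1}_{1,0}=c_1$ and $c_2:=c_3\circ e^{k+1}_{1,1}\in C^k(X)$, so $c_1\thickapprox_1 c_2$. It remains to see $c_2\in C^k(F_2)$. Its values on $\{0,1\}^k_*$ are all $a_2\in F_2$, so only the top vertex $c_2(1^k)=c_3(1^{k+1})$ needs attention, and for this I would pass to $X_{k-1}$. On $\{0,1\}^{k+1}_*$ the cube $\pi_{k-1}\circ c_3=\pi_{k-1}\circ f$ agrees with $d'\circ p^{k+1}_1$, where $d'(0)=p_1$, $d'(1)=p_2$. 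Since $X_{k-1}$ is $(k-1)$-step it has unique completion in every dimension exceeding $k-1$ (this follows from Lemma \ref{up-det}), whence $\pi_{k-1}\circ c_3=d'\circ p^{k+1}_1$; evaluating at $1^{k+1}$ gives $\pi_{k-1}(c_3(1^{k+1}))=d'(1)=p_2$, i.e.\ $c_2(1^k)\in F_2$. Hence $c_2\in C^k(F_2)$, which completes the proof.

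The main obstacle is the rigidity at the top dimension isolated above: since $X$ is only $k$-step and not $k$-ergodic, there is no freedom to perturb a $(k+1)$-cube within fibers, so the usual "choose a lift and correct its boundary" strategy breaks down. The proposal avoids correcting any $(k+1)$-cube at all: every adjustment is performed on $h$-dimensional simplices with $h\le k$, where Lemma \ref{changequiv3} permits moving values within $\sim_{k-1}$-classes, and the one remaining top vertex is pinned down afterwards by uniqueness of completion in the factor $X_{k-1}$.
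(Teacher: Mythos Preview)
Your argument is correct. Both your proof and the paper's follow the same overall strategy: build a partial map on $\{0,1\}^{k+1}_*$ whose $\varepsilon=0$ face is (equivalent to) $c_1$ and whose $\varepsilon=1$ values are constant in $F_2$, complete it to a cube in $C^{k+1}(X)$, and then push down to $X_{k-1}$ to see that the missing top vertex lands in $F_2$.

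The difference lies in how the corner is made checkable. The paper first invokes Lemma~\ref{eqrep} (which uses the $k$-ergodic structure of $F_1$) to replace $c_1$ by the single-spike cube $q_{1^k,x}$; after this reduction each face $f\circ e^{k+1}_{i,0}$ takes at most two values and is visibly of the form $d\circ p^k_1$, so the corner condition is immediate. You instead keep $c_1$ as is and handle the faces via Lemma~\ref{changequiv3}: each $f\circ s_v$ with $v_1=1$ is vertexwise $\sim_{k-1}$-equivalent to the genuine cube $d\circ p^{h(v)}_1$, and since $h(v)\le k=(k-1)+1$ that lemma applies. Your route avoids the detour through Lemma~\ref{eqrep} and transitivity of $\thickapprox_1$, at the cost of invoking the fiberwise-perturbation lemma; the paper's route makes the corner verification trivial but needs the preliminary representative from $F_1$'s internal structure. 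Both are short and the trade-off is essentially cosmetic. One small remark: your appeal to Lemma~\ref{up-det} for ``unique completion above step'' is a slight overreach of what that lemma literally states; the cleanest justification is to restrict the two $(k+1)$-cubes in $X_{k-1}$ along $e^{k+1}_{k+1,1}$ and use the $(k-1)$-step property directly, exactly as the paper does.
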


\begin{proof} Let $x_0\in F_1$ be an arbitrary element. We have by lemma \ref{eqrep} that there exists $x\in F_1$ such that $q_{1^k,x}\thickapprox_1 c_1$ holds. Now let $x_0'\in F_2$ be arbitrary. Let $f:\{0,1\}^{k+1}_*\to X$ be such that $f\circ e^{k+1}_{1,0}=q_{1^k,x}$ and $f(v)=x_0'$ for every other elements of $\{0,1\}^{k+1}_*$. It is easy to see that $f$ is a corner and so there is a completion $c:\{0,1\}^{k+1}\to X$ in $C^{k+1}(X)$. By factoring with $\sim_{k-1}$ and using that $X_{k-1}$ is $k-1$-step we obtain that $c(1^{k+1})\sim_{k-1} x_0'$ and thus $c(1^{k+1})\in F_2$. It follows that $c\circ e^{k+1}_{1,1}$ is a good choice for $c_2$.
\end{proof}

The main theorem of this chapter is the following.

\begin{theorem}\label{fiberisomorphism} Let $X$ be a $k$-step groupspace for $k\geq 2$. Then all fibers of $X$ are isomorphic.
\end{theorem}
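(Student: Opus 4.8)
The plan is to identify each fiber with its associated abelian group via Theorem \ref{class-k-step-k-ergodic} and then transport the group structure between two fibers using the composition calculus of the preceding chapter. By Lemma \ref{fiber-lem} every fiber $F$ is a $k$-step, $k$-ergodic groupspace, so by Theorem \ref{class-k-step-k-ergodic} it is isomorphic to $\mathcal{D}_k(A_F)$ for an abelian group $A_F$; moreover, fixing a base point, the construction of that theorem (with the index $i=1$) produces a group $Y_F:=C^k(F)/\thickapprox_1$ together with a bijection $\psi_{0^k}\colon F\to Y_F$ (Corollary \ref{abijection}) realizing the isomorphism. Since a homomorphism of abelian groups $A_{F_1}\to A_{F_2}$ automatically carries $\{c:w(c)=0\}$ to $\{c:w(c)=0\}$ and the cube structure of $\mathcal{D}_k$ is determined by $C^{k+1}$ (Lemma \ref{k-step-det}), it suffices to produce a group isomorphism $Y_{F_1}\cong Y_{F_2}$.

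First I would record a compatibility observation: for two $k$-cubes $c_1,c_2\in C^k(F)$ lying in a single fiber $F$, one has $c_1\thickapprox_1 c_2$ in $X$ if and only if $c_1\thickapprox_1 c_2$ inside $F$. Indeed, any connecting cube $c_3\in C^{k+1}(X)$ with $c_3\circ e^{k+1}_{1,0}=c_1$ and $c_3\circ e^{k+1}_{1,1}=c_2$ has every vertex on one of the two faces $\{v_1=0\}$ or $\{v_1=1\}$, so all its values already lie in $F$ and hence $c_3\in C^{k+1}(F)$. Consequently the trace of the global relation $\thickapprox_1$ on $C^k(F)$ is exactly the internal one, so the natural map $Y_F\to C^k(X)/\thickapprox_1$ is injective.

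Next I would use Lemma \ref{eqcubeinfiber} in both directions to define the candidate isomorphism $\Phi\colon Y_{F_1}\to Y_{F_2}$ by $\Phi([c_1]):=[c_2]$ whenever $c_1\in C^k(F_1)$, $c_2\in C^k(F_2)$ and $c_1\thickapprox_1 c_2$. Well-definedness and injectivity follow from the compatibility observation together with the transitivity and symmetry of $\thickapprox_1$, while Lemma \ref{eqcubeinfiber} guarantees that each $\thickapprox_1$-class meeting $C^k(F_1)$ also meets $C^k(F_2)$ and conversely, giving surjectivity and an inverse. To see that $\Phi$ is a homomorphism for the group law $\boxplus$ (which by Corollary \ref{sameop} equals $\boxplus_j$ for every $j\in[k]$ inside each fiber), given classes $A,B\in Y_{F_1}$ I would pick $j$-composable representatives $c_1\in A$, $c_1'\in B$ in $C^k(F_1)$ (Lemma \ref{gencompose}), then choose $c_2\in\Phi(A)$ with $c_1\thickapprox_1 c_2$ and, again by Lemma \ref{gencompose} inside $F_2$, a representative $c_2'\in\Phi(B)$ that is $j$-composable with $c_2$; a short check using the definition of $\Phi(B)$ then gives $c_1'\thickapprox_1 c_2'$. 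Applying Lemma \ref{compwell} with $i=1$ yields $c_1\boxplus_j c_1'\thickapprox_1 c_2\boxplus_j c_2'$, and since these two compositions lie in $C^k(F_1)$ and $C^k(F_2)$ respectively (every vertex sits on a face $\{v_j=0\}$ or $\{v_j=1\}$, forcing all values into the relevant fiber), this is precisely $\Phi(A\boxplus B)=\Phi(A)\boxplus\Phi(B)$.

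Finally, composing $\Phi$ with the bijections $\psi_{0^k}$ of the two fibers turns it into a group isomorphism $(F_1,+)\to(F_2,+)$, which by the first paragraph is a groupspace isomorphism $\mathcal{D}_k(A_{F_1})\to\mathcal{D}_k(A_{F_2})$, completing the proof. The main obstacle is the homomorphism step: the delicate part is arranging composable representatives in \emph{both} fibers simultaneously while keeping the cross-fiber relations $c_1\thickapprox_1 c_2$ and $c_1'\thickapprox_1 c_2'$ intact, so that Lemma \ref{compwell} applies; the compatibility observation of the second paragraph is exactly what makes all of the well-definedness go through.
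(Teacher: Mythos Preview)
Your proposal is correct and follows essentially the same route as the paper: reduce via Theorem~\ref{class-k-step-k-ergodic} to showing the associated abelian groups $Y_{F_1}$ and $Y_{F_2}$ are isomorphic, define the map between them by $[c_1]\mapsto[c_2]$ whenever $c_1\thickapprox_1 c_2$ in $X$, use Lemma~\ref{eqcubeinfiber} for existence/surjectivity, and Lemma~\ref{compwell} for the homomorphism property. Your write-up is in fact more careful than the paper's in two places: you spell out the compatibility of the internal and ambient $\thickapprox_1$ relations on $C^k(F)$ (needed for well-definedness and injectivity of $\Phi$), and you explicitly arrange simultaneously composable representatives in both fibers before invoking Lemma~\ref{compwell}, whereas the paper simply assumes such $c_{i,j}$ exist.
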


\begin{proof} Lat $F_1$ and $F_2$ be two fibers of $X$. According to lemma \ref{fiber-lem} we have that $F_1$ and $F_2$ are both $k$-step, $k$-ergodic groupspaces. Theorem \ref{class-k-step-k-ergodic} says that $F_i$ as a groupspace is isomorphic to $\mathcal{D}_k(A_i)$ for some abelian groups $A_1$ and $A_2$. It suffices to show that $A_1$ and $A_2$ are isomorphic. Let $Y_i=C^k(A_i)/\thickapprox_1$. Lemma \ref{grstructure} describes the structure of $A_i$ as $(Y_i,\boxplus_1)$. First we define a bijection $\phi$ between $Y_1$ and $Y_2$ and then show that it is an isomorphism. Let $c_i\in C^k(A_i)$ be cubes for $i=1,2$. Note that $c_i\in C^k(X)$ holds for $i=1,2$. We say that $\phi(c_1)=c_2$ if $c_1\thickapprox_1 c_2$ holds in $C^k(X)$. The fact that $\thickapprox_1$ is an equivalence relation and lemma \ref{eqcubeinfiber} shows $\phi$ is a well defined function and that it is a bijection. Let $c_{i,j}\in C^k(A_i)$ for $i,j\in\{1,2\}$ and assume  that $c_{i,1}$ is $1$-composable with  $c_{i,2}$ for $i=1,2$ and that $c_{1,i}\thickapprox_1 c_{2,i}$ for $i=1,2$. Then lemma \ref{compwell} implies that the cubes $c_i:=c_{i,1}\boxplus_1 c_{i,2}$ satisfy $c_1\thickapprox_1 c_2$ and thus $\phi(c_1)=c_2$. This verifies that $\phi$ is an isomorphism between $Y_1$ and $Y_2$. 
\end{proof}

In theorem \ref{fiberisomorphism} we obtain the isomorphism of the different fibers of a $k$-step groupspace. We are ready to define the sequence of the structure groups of an arbitrary groupspace $X$.

\begin{definition}[Structure groups of groupspaces] Let $X$ be an arbitrary groupspace. We have by theorem \ref{class1step} that $X/\sim_1=X_1$ (Which is its own fiber) is isomorphic to $\mathcal{D}_1(G_1)$ for some possibly non-commutative group $G_1$. Furthermore if $k\geq 2$, the fibers of $X/\sim_k=X_k$ are all isomorphic to $\mathcal{D}_k(G_k)$ for some abelian group $G_k$. The group $G_k$ (for $k=1,2,\dots$) is called the {\bf $k$-th structure group of $X$}.
\end{definition}

Now we describe how to view a $k$-step groupspace $X$ as a principal bundle over its $k-1$ step factor $X_{k-1}$. If $k=1$ then $X\simeq\mathcal{D}_1(G)$ for some general group. Both the right and the left multiplication action of $G$ on $\mathcal{D}_1(G)$ is a group of groupspace automorphisms and thus $\mathcal{D}_1(G)$ is naturally a principle $G$ bundle over the one point space. Assume that $k\geq 2$. Theorem \ref{fiberisomorphism} says that each fiber of $X$ is isomorphic to $\mathcal{D}_k(A)$ for some abelian group $A$. However this does not yet yield a canonical action of $A$ on $X$ whose orbits are the fibers. To obtain such an action, observe that the proof of theorem \ref{fiberisomorphism} yields a cannonical isomorphism $\alpha_F$ between $Y_F:=C^k(F)/\thickapprox_1$ and an abelian group $A$ for each fiber $F$. Recall that $(Y_F,\boxplus)$ is an abelian group and the map $\phi$ constructed in theorem \ref{fiberisomorphism} is canonical. To define the action of $A$ on a fiber $F$ let $a\in A$. Then if $x\in F$ we have that there exists a cube $c\in C^k(F)$ such that $c(v)=x$ holds for $v\neq 0^k$ and furthermore $c\thickapprox_1 a$ where $a$ is identified with its image in $Y_F$ under the canonical isomorphism. We define $x^a$ as $c(0^k)$. It is easy to see that this is an action of $A$. To see this let us chose an arbitrary element $x_0\in F$ and let $\psi:F\to Y_F$ be as in Chapter \ref{Class-k}.  It follows from lemma \ref{kerg-facequiv} that $\psi(y)-\psi(x)=a$ and thus $y=x+\psi^{-1}(a)$. Notice that both $\psi$ and the $+$ operation depends on $x_0$. However the action obtained this way does not depend on $x_0$ since the definition of $y$ did not use $x_0$. It follows that the action $x\mapsto x+\psi^{-1}(a)$ is canonical.

\end{document}